\DeclareSymbolFont{bbold}{U}{bbold}{m}{n}
\DeclareSymbolFontAlphabet{\mathbbold}{bbold}
\newtheorem{thm}{Theorem}
\newtheorem{prop}[thm]{Proposition}
\newtheorem{lem}[thm]{Lemma}
\newtheorem{cor}[thm]{Corollary}
\theoremstyle{definition}
\theoremstyle{remark}
\newtheorem{rem}[thm]{Remark}
\newcommand{\BZ}{\mathbb{Z}}
\newcommand{\R}{\mathbb{R}}
\newcommand{\Z}{\mathbb{Z}}
\title{Sparse Bounds for the Bilinear Spherical Maximal Function}
\author{Tainara Borges, Benjamin Foster, Yumeng Ou, Jill Pipher, and Zirui Zhou}
\address[T. Borges]{Department of Mathematics, Brown University, Providence, RI 02912}
\address[B. Foster]{Department of Mathematics, Stanford University, Stanford, CA 94305}
\address[Y. Ou]{Department of Mathematics, University of Pennsylvania, Philadelphia, PA 19104}
\address[J. Pipher]{Department of Mathematics, Brown University, Providence, RI 02912}
\address[Z. Zhou]{Department of Mathematics, University of California, Berkeley, Berkeley, CA 94720}
\thanks{Y.O. is supported by NSF DMS-2055008. \\
B.F. completed part of the work while visiting the Hausdorff Research Institute for Mathematics, funded by the Deutsche Forschungsgemeinschaft (DFG, German Research Foundation) under Germany's Excellence Strategy – EXC-2047/1 – 390685813. \\
2020 Mathematics Subject Classification: 42B15, 42B25}
\begin{document}

\begin{abstract}
We derive sparse bounds for the bilinear spherical maximal function in any dimension $d\geq 1$. When $d\geq 2$, this immediately recovers the sharp $L^p\times L^q\to L^r$ bound of the operator and implies quantitative weighted norm inequalities with respect to bilinear Muckenhoupt weights, which seems to be the first of their kind for the operator. The key innovation is a group of newly developed continuity $L^p$ improving estimates for the single scale bilinear spherical averaging operator. 
\end{abstract}

\maketitle
\section{Introduction}
In this article, we study the bilinear spherical maximal function (applied to nonnegative functions without loss of generality)
\begin{equation}
\mathcal{M}(f,g)(x)=\sup_{t>0}\mathcal{A}_{t}(|f|,|g|)(x),
\end{equation}where at a single scale $t>0$, the averaging operator $\mathcal{A}_t$ is given by the formula
\begin{equation}
    \mathcal{A}_{t}(f,g)(x)=\int_{S^{2d-1}}f(x-ty)g(x-tz)\,d\sigma(y,z).
\end{equation}Here, $d\sigma$ denotes the normalized surface measure on $S^{2d-1}$. We are also interested in the lacunary version of the operator
\begin{equation}
    \mathcal{M}_{lac}(f,g)(x)=\sup_{m\in\BZ}\mathcal{A}_{2^m}(|f|,|g|)(x).
\end{equation}

Our goal is to prove that there is a sparse domination for the trilinear form associated to these operators, i.e.
\begin{equation}
    |\langle \mathcal{M}(f,g),h\rangle| \lesssim \sum_{Q\in \mathcal{S}} |Q| \langle f\rangle_{Q,p}\langle g\rangle_{Q,q}\langle h\rangle_{Q,r}
\end{equation}
(or with $\mathcal{M}_{lac}$ replacing $\mathcal{M}$) for all functions $f,g,h$, where $\mathcal{S}$ is a sparse family and $p, q, r$ satisfy suitable scaling relations. Here, the inner product on the left hand side is the usual $L^2$ inner product and the bracketed quantities on the right side denote $L^p$ averages over the cube $Q$, i.e.
\begin{equation}
    \langle f\rangle_{Q,p}:=\left(\frac{1}{|Q|} \int_Q |f|^p\right)^{1/p}.
\end{equation}
Recall that a collection $\mathcal{S}$ of cubes is called $\eta$-sparse if for every $Q\in \mathcal{S}$, there exists $E_Q\subset Q$ satisfying $|E_Q|\geq \eta |Q|$ and $\{E_Q\}_{Q\in\mathcal{S}}$ are pairwise disjoint. The exact value of the sparse parameter $\eta>0$ in the article is not very important.

Sparse domination has been a rapidly developing area in modern harmonic analysis, since the seminal works of Lerner \cite{Lerner} and Lacey \cite{Lacey1}. It reduces the study of operators of various natures to simpler dyadic operators that are localized and of averaging type. Sparse domination has becoming a leading method in deducing sharp weighted norm inequalities (for operators such as Calder\'on-Zygmund operators \cite{CR, Lacey1, Lerner}, rough singular integrals \cite{CCDPO, DPHL}, the spherical maximal function \cite{Lacey}, Bochner-Riesz multipliers \cite{BBL, LMR}, to name a few, and even non-integral operators \cite{BFP}). In addition to weighted estimates (which in particular include the unweighted $L^p$ space estimates), sparse bounds are also known to imply (often sharp) weak type endpoint estimates. In some sense, the range of exponents for which a sparse bound exists provides refined quantification of the size of the operator. We refer the readers to the aforementioned articles and the references therein for a more detailed account of the history of the sparse domination theory and only highlight below the most relevant prior works to the main result of this article.

One of the most significant developments in the theory was obtained by Lacey \cite{Lacey}, where the sharp (up to the boundary) sparse range for the (linear) spherical maximal function
\[
\mathcal{M}_{linear}(f)(x)=\sup_{t>0}\mathcal{A}_{linear,t}|f|(x):=\sup_{t>0} \int_{S^{d-1}}|f(x-ty)|\,d\sigma(y)
\]and its lacunary analogue were obtained. This operator is highly important in harmonic analysis, especially due to its close connection to the local smoothing estimate for the wave equations \cite{MSS}. Lacey's sparse bound recovered the $L^p$ bounds of $\mathcal{M}_{linear}$ in all dimensions $d\geq 2$, originally due to Stein \cite{Stein} and Bourgain \cite{Bourgain}, and implied novel quantitative weighted norm inequalities. Most importantly, it opened the door to the study of sparse domination for Radon type integral operators and inspired many subsequent works (see e.g. \cite{Oberlin, CO, BC, Hu, CDPPV, BRS, AHRS}). A key achievement of \cite{Lacey} is a framework that reduces the sparse bound for the multi-scale operator to the ``continuity $L^p$ improving'' estimate for the corresponding single scale operators. This general scheme applies to many singular Radon type operators far beyond the spherical maximal function, as demonstrated in the aforementioned subsequent works. In particular, this established a direct connection between the geometry of the manifold being studied and the range of sparse exponents for the operator. 

It would thus be natural to wonder whether similar phenomena also exist in the multilinear setting. Our result is among such efforts in bringing sparse domination into the world of multilinear Radon type operators, where very few results are known. Indeed, even though there is already quite some success in obtaining sparse bounds for multilinear operators such as Calder\'on-Zygmund operators \cite{CR, LN}, rough singular integrals \cite{Barron}, and bilinear Hilbert transforms \cite{CDPO, BM}, the investigation for multilinear Radon transforms only started very recently, before which the weighted norm inequalities for such operators (with respect to multilinear Muckenhoupt weights) were far out of reach. To date, the only known sparse domination results for such operators are due to \cite{Roncal} and \cite{triangle}, where the product type bilinear spherical maximal function and the triangle averaging operator are studied, respectively. The multilinear operators under study here are natural analogues of important linear Radon type operators. There are multilinear Radon type operators which are closely related to questions in continuous geometric combinatorics and geometric measure theory, such as finite point configuration problems and Falconer distance set problems (\cite{multilinearradon,equilateral,discretegeometry}). Our motivation in investigating this natural bilinear spherical maximal function was not in a specific application of this type, but in developing methods that may be useful in future explorations of this general class of bilinear Radon type operators. It will be of interest to understand the 
potential for sparse bounds of such operators as they will imply weighted estimates.

Our main theorem is the following.

\begin{thm}\label{thm: main}
Let $d\geq 2$. For all exponents $(p,q,r)$ such that $r>1$ and $(\frac{1}{p}, \frac{1}{q}, \frac{1}{r})$ is in the boundedness domain
\begin{equation*}
    \mathcal{R}(d)=\left\{
    \left(\frac{1}{p},\frac{1}{q},\frac{1}{r}\right)
    \colon 1< p,q<\infty, 0<r<\infty, \text{ and }\frac{1}{r}<\frac{1}{p}+\frac{1}{q}<m(d,r)\right \}
\end{equation*}
where 
\begin{equation*}
m(d,r)=\begin{cases}
    &\min\{1+\frac{d}{r},\frac{2d-1}{d},\frac{1}{r}+\frac{2(d-1)}{d}\},\text{ if }d\geq 3,\\
    & \min\{1+\frac{1}{r},\frac{3}{2}\}\text{ if }d=2,
    \end{cases}
\end{equation*}the bilinear spherical maximal function $\mathcal{M}$ has a $(p,q,r')$ sparse bound, where $r'$ is given by $\frac{1}{r}+\frac{1}{r'}=1$. More precisely, for all functions $f,g,h \in C_0^\infty$, there exists a sparse collection $\mathcal{S}$ of cubes in $\mathbb{R}^d$ such that
\[
|\langle \mathcal{M}(f,g),h\rangle| \lesssim \sum_{Q\in \mathcal{S}} |Q| \langle f\rangle_{Q,p}\langle g\rangle_{Q,q}\langle h\rangle_{Q,r'}.
\]
\end{thm}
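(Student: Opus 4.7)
The natural strategy is to adapt the framework Lacey developed for the linear spherical maximal function to the bilinear setting. The first step is to organize the supremum in $\mathcal{M}(f,g)$ according to dyadic annuli $t \in [2^m, 2^{m+1}]$, reducing matters to controlling (i) the lacunary operator $\mathcal{M}_{lac}$, for which a single-scale $L^p \times L^q \to L^r$ improving estimate for $\mathcal{A}_t$ suffices, and (ii) the short oscillation $\sup_{s \in [1,2]} |\mathcal{A}_{2^m s}(f,g) - \mathcal{A}_{2^m}(f,g)|$, which requires a matching H\"older continuity estimate
\[
\|\mathcal{A}_t - \mathcal{A}_{t'}\|_{L^p \times L^q \to L^r} \lesssim |t - t'|^\delta
\]
for $t,t' \in [1,2]$ and some $\delta = \delta(p,q,r) > 0$, valid at all triples whose reciprocals lie in $\mathcal{R}(d)$.

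With these single-scale inputs in hand, I would construct the sparse family via the standard recursive Calder\'on-Zygmund stopping time. Fix a cube $Q_0$ containing the supports of $f,g,h$ and declare a maximal subcube $Q' \subsetneq Q_0$ a stopping child whenever one of $\langle |f|\rangle_{Q',p}$, $\langle |g|\rangle_{Q',q}$, $\langle |h|\rangle_{Q',r'}$ exceeds a large constant times the corresponding average over $Q_0$. Weak $L^p$ estimates for the Hardy-Littlewood maximal function force the union of stopping children to have measure at most $\tfrac{1}{2}|Q_0|$, so sparsity will follow by iteration. The induction step rests on the single-cube bound
\[
\bigl|\langle \mathcal{M}(f \ind_{Q_0}, g \ind_{Q_0}), h \ind_{Q_0 \setminus \bigcup Q'}\rangle\bigr| \lesssim |Q_0| \langle |f|\rangle_{Q_0,p} \langle |g|\rangle_{Q_0,q} \langle |h|\rangle_{Q_0,r'},
\]
which I would prove by splitting the scales $t$ into three regimes relative to $\ell(Q_0)$: scales $t \ll \ell(Q_0)$ are absorbed into the bilinear Hardy-Littlewood maximal function; scales $t \gtrsim \ell(Q_0)$ are summed against the single-scale improving estimates, with a geometric gain coming from the off-diagonal scaling picked up when comparing $\ell(Q_0)$-averages to the larger cube; and scales $t \sim \ell(Q_0)$ are handled via the oscillation bound, which telescopes using the continuity estimate.

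The heart of the proof, flagged in the abstract as the key innovation, is establishing the single-scale and continuity $L^p$ improving estimates on the full open region $\mathcal{R}(d)$. I would approach this via a Littlewood-Paley decomposition of $d\sigma$ on $S^{2d-1}$, using the stationary phase bound $|\widehat{d\sigma}(\xi)| \lesssim |\xi|^{-(2d-1)/2}$, together with a bilinear Fourier decomposition splitting according to which of the two input frequencies dominates. Interpolating between the trivial bound at high frequency and a Plancherel-based bound at low frequency yields the region $\mathcal{R}(d)$, the three terms in $m(d,r)$ corresponding respectively to the sharp Stein-Bourgain exponent inherited from the linear spherical maximal function, a Knapp-type necessary condition on $S^{2d-1}$, and the degeneration when one input becomes nearly constant. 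For the continuity version, one writes $\mathcal{A}_t - \mathcal{A}_{t'} = \int_{t'}^t \partial_s \mathcal{A}_s\,ds$; each $s$-derivative costs one factor of frequency, and re-interpolating yields a genuine H\"older gain $\delta > 0$ strictly inside $\mathcal{R}(d)$.

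The main obstacle I expect is twofold. First, obtaining the single-scale improving estimates on the sharp region $\mathcal{R}(d)$ rather than a strictly smaller range is significantly more delicate than in the linear case, because of the $(2d-1)$-dimensional geometry of $S^{2d-1}$ and the nontrivial coupling between the two inputs, and because a naive iteration of the linear improving bound loses the sharp exponent. Second, one must verify that the H\"older exponent $\delta$ degenerates slowly enough near the boundary of $\mathcal{R}(d)$ that the dyadic sum over scales converges; in practice this forces interpolation very close to the boundary and careful tracking of the blowup of constants. Once both of these technical ingredients are established, the assembly into a sparse form follows Lacey's template essentially verbatim, with only cosmetic modifications to accommodate the trilinear pairing.
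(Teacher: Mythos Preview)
Your outline has the right top-level architecture (stopping-time recursion producing the sparse family, reduction to a single-cube estimate), but it misidentifies the key analytic input and the method for obtaining it.

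The continuity estimate you propose is in the \emph{scale} parameter, $\|\mathcal{A}_t-\mathcal{A}_{t'}\|_{L^p\times L^q\to L^r}\lesssim|t-t'|^\delta$. What Lacey's framework actually consumes, and what the paper proves, is continuity in the \emph{translation} parameter applied to the localized maximal operator:
\[
\Bigl\|\sup_{1\le t\le 2}\bigl|\mathcal{A}_t(f,g-\tau_h g)\bigr|\Bigr\|_{L^r}\lesssim |h|^\eta\|f\|_{L^p}\|g\|_{L^q}.
\]
The single-cube bound is not obtained by splitting over ranges of $t$; it is obtained by a Calder\'on--Zygmund good/bad decomposition $f=\gamma_f+\beta_f$, $g=\gamma_g+\beta_g$ and a GG/BG/GB/BB case analysis. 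The bad parts have mean zero on small cubes $P$, and this cancellation is exploited by averaging over translations $y$ of size $\ell(P)$; the factor $|y|^\eta$ from the translation-continuity estimate then yields geometric decay $2^{-\eta k}$ in the gap $k$ between $\ell(Q)$ and $\ell(P)$. Your scale-continuity estimate does not see the mean-zero structure of $\beta_f$ and cannot close BG, GB, BB. Relatedly, your proposed handling of small scales (``absorbed into the bilinear Hardy--Littlewood maximal function'') does not produce a single-cube bound: the stopping conditions control only \emph{averages} of $f,g,h$, not pointwise values, so there is no control of $\mathcal{M}(f,g)(x)$ at scales below the stopping cubes for $x$ outside them.

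Second, your proposed route to the single-scale estimates via stationary phase for $\widehat{d\sigma}$ on $S^{2d-1}$ and bilinear Littlewood--Paley is precisely the approach the paper flags (final remark of the introduction) as working only for $d\ge 4$; the Fourier decay is too weak in low dimensions to reach the full region $\mathcal{R}(d)$. The paper instead works in physical space: one slices $S^{2d-1}$ over $y\in B^d(0,1)$, so the inner integral becomes a \emph{linear} spherical average of $g$ at radius $\sqrt{1-|y|^2}$. For $d\ge 3$ this is combined with the Jeong--Lee dyadic decomposition in $1-|y|$ and Lacey's linear translation-continuity estimate, the sum over dyadic annuli converging thanks to the weight $(1-|y|^2)^{(d-2)/2}$. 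For $d=2$ that sum diverges and a separate, considerably more delicate argument is required: after a change of variable the translation in $g$ is rewritten as a difference of linear spherical averages of $f$ at two nearby radii, a boundary piece near $|y|=1$ is split off, and the interior piece is controlled via a rescaled version of Lacey's scale-difference inequality for the linear operator. Finally, note that the plain $L^p$-improving estimate for $\tilde{\mathcal{M}}$ on $\mathcal{R}(d)$ is already known from Jeong--Lee and is not the new content; only the translation-continuity upgrade is.
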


The $L^p\times L^q \to L^r$ boundedness of the operator $\mathcal{M}$ has attracted a great amount of attention recently and has been studied by multiple authors (see e.g. \cite{GGIPS, BGHHO, GHH, HHY, JL}). In \cite{JL}, Jeong and Lee studied the operator in $d\geq 2$ via its slices and made a breakthrough observation that there is a pointwise bound
\begin{equation}\label{eqn: JL pointwise}
\mathcal{M}(f,g)(x)\lesssim M(f)(x)\mathcal{M}_{linear}(g)(x),
\end{equation}where $M$ denotes the Hardy-Littlewood maximal function. Their slicing approach that led to this inequality involved using the coarea formula to decompose an integral over $S^{2d-1}$ into an iterated integral over the $d$-dimensional unit ball and $(d-1)$-dimensional spheres. The pointwise domination beautifully concludes the study of the Lebesgue space bounds for $\mathcal{M}$. More precisely, (\ref{eqn: JL pointwise}) implies the sharp boundedness of $\mathcal{M}$ (\cite[Theorem 1.1]{JL}): for $d\geq 2$, $1\leq p,q\leq\infty$, $0<r\leq \infty$, $\mathcal{M}$ maps boundedly from $L^p\times L^q$ to $L^r$ if and only if $\frac{1}{p}+\frac{1}{q}=\frac{1}{r}$ and $r>\frac{d}{2d-1}$ except the case $(p,q,r)=(1,\infty,1)$ or $(\infty,1,1)$. Unfortunately, (\ref{eqn: JL pointwise}) doesn't imply that the bilinear sparse bound for $\mathcal{M}$ will be inherited from the linear ones for $M$ and $\mathcal{M}_{linear}$. Indeed, it is not generally true that the pointwise product of two sparse operators is dominated by a bilinear sparse operator. Nevertheless, our proof relies heavily on the slicing strategy.

\begin{rem}
Since the lacunary maximal function $\mathcal{M}_{lac}$ is obviously dominated by $\mathcal{M}$, the same result in Theorem \ref{thm: main} also extends to $\mathcal{M}_{lac}$. However, it seems that our method doesn't imply an improved result (i.e. larger sparse range) for $\mathcal{M}_{lac}$, unlike the linear case in \cite{Lacey} or the case of the product type bilinear spherical maximal function \cite{Roncal}. This is because our proof involves slicing the bilinear operator, which would result in a linear single scale averaging operator for a continuum spectrum of radii. The gain brought by the lacunary scales would thus vanish. In fact, to the best of our knowledge, the optimal range of even the Lebesgue space bounds $L^p\times L^q\to L^r$ for $\mathcal{M}_{lac}$ is still unknown, other than the $d=1$ case very recently obtained in \cite{MCZZ}.
\end{rem}

Using the sparse bounds obtained in Theorem \ref{thm: main} and standard arguments, one can immediately recover the full range of the $L^{p_0}\times L^{q_0} \to L^{r_0}$ boundedness of $\mathcal{M}$ proved in \cite[Theorem 1.1]{JL}. Indeed, in general, let $T$ be a bi-sublinear operator. Then the $(p,q,r')$ sparse bound for $\langle T(f,g),h\rangle$ implies the $L^{p_0}\times L^{q_0}\to L^{r_0}$ bound of $T$, for all $p_0\in (p,\infty]$, $q_0\in (q,\infty]$ with $\min(p_0,q_0)<\infty$, and $\frac{1}{r_0}=\frac{1}{p_0}+\frac{1}{q_0}$ (for instance, see \cite[Proposition 1.2]{CDPO}). In Theorem \ref{thm: main}, if one chooses $r$ so that $m(d,r)=\frac{2d-1}{d}$ (for instance take any $r\in (1,d)$) and chooses $p,q$ so that $\frac{1}{p}+\frac{1}{q}$ is close to $\frac{2d-1}{d}$, the desired $L^{p_0}\times L^{q_0}\to L^{r_0}$ norm bounds then follow. It is also of interest to study the mapping properties of $\mathcal{M}$ on Lorentz spaces (see e.g. \cite[Theorem 1.1]{JL}) and it seems likely that the method developed here could also lead to a sparse domination for $\mathcal{M}$ that involves Lorentz norm averages. It would be interesting to develop such versions of the sparse domination and see whether they imply improved Lorentz space estimates. We leave that for future exploration.

Moreover, via standard arguments, Theorem \ref{thm: main} implies for the first time quantitative weighted norm inequalities for $\mathcal{M}$ with respect to multilinear Muckenhoupt weights. Since this deduction is routine and is not the focus of the article, we refer the reader to \cite{Roncal, triangle} and the references therein for a more comprehensive discussion on the weighted corollaries. It is unknown though whether the weighted norm inequalities derived are sharp.

In Theorem \ref{thm: main}, some of the sufficient conditions on the range of exponents $(p,q,r)$ for which sparse bounds hold true are also necessary. We will show this in Section \ref{sec: sharp} using several explicitly constructed examples. Some of the examples already appeared in \cite{JL} in connection to their $L^p$ improving estimate for the localized bilinear maximal function $\tilde{\mathcal{M}}$ (see (\ref{eqn: loc M}) for its definition). We also construct a new example that, in addition to providing necessary conditions for the sparse bounds, gives a previously unknown necessary condition for such $L^p$ improving estimate,  strengthening \cite[Proposition 3.3]{JL}. It is unknown though whether this new necessary condition is sufficient for the $L^p$ improving estimate of $\tilde{\mathcal{M}}$.

In dimension $d=1$, we also obtain continuity $L^p$ improving estimates for the single scale averaging operator and hence sparse bounds for the bilinear spherical maximal function, but only for the lacunary version (Theorem \ref{thm: d1} in Section \ref{sec: d1}). In general, the lower the ambient dimension is, the more singular the spherical maximal function becomes, which can be seen as a consequence of the slower decay of the Fourier transform of the surface measure on the sphere. In particular, in $d=1$, the bilinear circular maximal function behaves very differently from all its higher dimensional analogues. For instance, the boundedness result for $\mathcal{M}$ obtained in \cite{JL} only applies in $d\geq 2$, and such Lebesgue space bounds in $d=1$ were only very recently obtained, independently, in \cite{MCZZ, DR} via somewhat similar approaches involving parametrizing pieces of the circle. For the single scale operators, the study of the $L^p$ improving estimates for $\mathcal{A}_t$ in $d=1$ was initiated by D. Oberlin \cite{DOberlin} several decades ago but one still only has limited knowledge on the optimal range for such estimates (see \cite{BS, SS}). When it comes to the localized maximal function $\tilde{\mathcal{M}}$ defined in (\ref{eqn: loc M}) below, as far as the authors are aware, there is no known result on its boundedness outside the H\"older range ($\frac{1}{p}+\frac{1}{q}=\frac{1}{r}$) in the literature. In fact, our results in the $d=1$ case (on the sparse bound and the continuity estimate for the single scale operators) seem to be the first of their kinds concerning Radon type operators in the bilinear setting in dimension $d=1$. Indeed, both the previously studied operators in this direction, the product type bilinear spherical maximal function \cite{Roncal} and the triangle maximal operator \cite{triangle}, are only defined in $d\geq 2$.

\subsection*{Novelty of the proof}
As in the linear case studied in \cite{Lacey}, the core of the matter here is to obtain a continuity $L^p$ improving estimate for the single scale operator, which, in the case of the full version of the bilinear spherical maximal function $\mathcal{M}$, means the following restricted range maximal function
\begin{equation}\label{eqn: loc M}
\tilde{\mathcal{M}}(f,g)(x):=\sup_{t\in [1,2]}|\mathcal{A}_t(f,g)(x)|.
\end{equation}Let $d\geq 2$. It is shown in \cite{JL} that for any tuple $(p,q,r)$ such that $(\frac{1}{p}, \frac{1}{q}, \frac{1}{r}) \in \mathcal{R}(d)$ (or certain parts of the boundary of the region), $\tilde{\mathcal{M}}$ maps from $L^p\times L^q$ into $L^r$, which is often referred to as the $L^p$ improving estimate, as the target space $L^r$ is better than the space corresponding to the H\"older exponent. Following closely the methods in \cite{Roncal, triangle}, one can reduce the desired sparse bounds to the continuity $L^p$ improving estimates of the form
\[
\|\sup_{1\leq t\leq 2}|\mathcal{A}_t(f,g)-\mathcal{A}_t(f,\tau_h g)|\|_{L^r}\leq C |h|^\eta \|f\|_{L^p}\|g\|_{L^q},
\]where $|h|<1$, $\eta=\eta(d,p,q,r)>0$ and $\tau_h g(\cdot)=g(\cdot-h)$ denotes the translation operator by $h$.

Compared to earlier works, we are faced with several unique challenges here. First, our operator doesn't have a product structure, making it difficult to deduce multilinear estimates from their linear counterparts. This is in contrast to the situation in \cite{Roncal}, where the product type bilinear spherical maximal function 
\[
\mathcal{M}_{prod}(f,g)(x):=\sup_{t>0}\mathcal{A}_{linear, t}(f)\mathcal{A}_{linear, t}(g)(x)
\]is studied. For this operator, the continuity estimate obtained for the linear averaging operator $\mathcal{A}_{linear, t}$ in \cite{Lacey} easily implies the desired bilinear bounds. 

Second, even though it is possible to use the slicing technique to reduce to the linear averaging operator, the slices obtained for our operator are not local. For example, considering the case $t=1$, a simple calculation \cite[proof of Lemma 2.1]{JL} shows that
\[
\begin{split}
&|\mathcal{A}_1(f,g)(x)|\\
\leq &\int_{B^d(0,1)}|f(x-y)|\left|\int_{S^{d-1}}g(x-\sqrt{1-|y|^2}z)\,d\sigma_{d-1}(z)\right|(1-|y|^2)^{\frac{d-2}{2}}\,dy.
\end{split}
\]For each $y$ fixed, the inner integral is the linear spherical averaging of $g$ at radius $\sqrt{1-|y|^2}$. One may thus be tempted to insert the linear continuity estimate from \cite{Lacey} here to estimate the inner integral. However, with varying $y$, the radius $\sqrt{1-|y|^2}$ may become too small (smaller than the translation parameter $|h|$) for the continuity estimate to apply. This is in contrast to the triangle maximal operator studied in \cite{triangle}
\[
T(f,g)(x):=\sup_{t>0}\left|\int_{\mathcal{Z}}f(x-ty)g(x-tz)\,d\mu(y,z)\right|,
\]where $\mathcal{Z}=\{(y,z)\in \mathbb{R}^{2d}:\, |y|=|z|=|y-z|=1\}$ and $\mu$ is the natural surface measure on $\mathcal{Z}$ as an embedded submanifold of $\mathbb{R}^{2d}$. Notice that conditions $|y|=|z|=1$ ensure that this operator is always localized at scale $t$ for each input function, making it possible to deduce the bilinear continuity estimate fairly straightforwardly from the linear one.

In fact, one may actually try to prove the sparse bound in Theorem \ref{thm: main} solely based on slicing and the continuity estimate for the linear single scale averaging operator, bypassing the bilinear continuity estimate completely. However, the same difficulty seems to show up: the non-local feature of the operator would introduce extra singularities for extreme values of $y$.

The main novelty of Theorem \ref{thm: main} lies in how we overcome these barriers and obtain the desired bilinear continuity estimate. When $d\geq 3$, we apply an intermediate decomposition in \cite{JL} to localize the operator to different dyadic scales and dominate each piece by a quantity involving the linear version of the localized spherical maximal operator. The desired estimate then follows from slicing and the continuity estimate for the linear operator obtained by Lacey \cite{Lacey}. 

Things become much more complicated when $d=2$, where the dyadic decomposition fails to generate a convergent sum. This seems to manifest the fact that the spherical maximal function, even in the linear case, behaves more wildly in $d=2$, where the $L^2$ based argument stops working (which was why Bourgain's proof for the $L^p$ bound of $\mathcal{M}_{linear}$ in $d=2$  \cite{Bourgain} came much later than Stein's proof for $d\geq 3$ \cite{Stein}). In this case, the earlier argument relying on slicing and reducing to the continuity estimate for the linear localized spherical maximal function doesn't work any more. Instead, we conduct a direct and more careful analysis for the bilinear operator. The analysis is much more delicate and several new ingredients come into play. For instance, one needs to first break up the operator into two parts and prove a continuity estimate for each part at different suitable tuples of exponents. Interpolation then allows one to conclude the continuity estimate for the original operator. Moreover, a change of variable reduces the problem to estimating the difference between averaging operators with different radii, which is studied via a further decomposition into dyadic scales. A key estimate in this part is Lemma \ref{lemma2d}, which is a rescaled version of \cite[Proposition 4.2]{Lacey}. 

In the case $d=1$, in order to obtain sparse bounds for the lacunary bilinear spherical maximal function $\mathcal{M}_{lac}$, one needs to study the continuity estimate for the single scale averaging operator $\mathcal{A}_t$, which we summarize in Theorem \ref{d1continuity} in Section \ref{sec: d1}. The proof of this theorem follows a completely different approach, as one is unable to slice the bilinear operator in $d=1$. Our proof directly makes use of information in the frequency space and relies on a very recently developed trilinear smoothing inequality \cite{MCZZ}, which implies Sobolev norm bounds for the averaging operator. 

In addition, in all dimensions $d\geq 1$, by making use of the shape of the region of exponents $\mathcal{R}(d)$, we manage to prove the sparse bounds for the tuple $(p,q,r)$ without assuming that $p, q\leq r$. This is in contrast to prior works \cite{Roncal, triangle}, where (certain versions of) this constraint showed up. This will be explained towards the end of the proof of Theorem \ref{thm: main} in Section \ref{sec: sparse lemma} where the ``Bad-Good'' case is discussed.

\subsection*{Plan of the article.} 
We first introduce the bilinear continuity $L^p$ improving estimates for $\tilde{\mathcal{M}}$ in $d\geq 2$ in Section \ref{sec: continuity}, and then move onto the $d=1$ case in Section \ref{sec: d1} where the continuity estimate for the single scale operator $\mathcal{A}_t$ is obtained. These are the key intermediate results of the article and may be of independent interest. We then give an initial reduction to dyadic maximal operators in Section \ref{sec: reduction}, and present the main lines of the proof of Theorem \ref{thm: main} in Section \ref{sec: sparse lemma}. Both of these two sections follow closely the arguments in \cite{Roncal, triangle} and are included mainly for the sake of completeness and to explain why the constraint $p,q\leq r$ can be removed. In Section \ref{sec: sharp}, we give examples to show sharpness of part of the range of exponents for the sparse bounds and the continuity $L^p$ improving estimates.

\begin{rem}
During the completion of the article, the authors learned that an updated version of \cite{triangle}, simultaneously under completion, includes an elegant expanded framework that covers general multilinear Fourier multipliers which in particular includes the bilinear spherical maximal function as an example. The proof in \cite{triangle} (for the key continuity estimate) is of a different nature, which beautifully links the problem to the decay of the Fourier transform of the surface measure on the sphere. However, their result only holds in dimension $d\geq 4$. This is probably not surprising, as the Fourier decay of the surface measure becomes worse in low dimensions, where the behavior of the spherical maximal function and Radon type operators is in general more delicate.
\end{rem}

\section{Continuity estimates in $d\geq 2$}\label{sec: continuity}
In this section, we obtain several continuity estimates for the localized bilinear spherical maximal function, which will play a key role in the proof of Theorem \ref{thm: main}. Recall that we have defined
\[\mathcal{A}_{t}(f,g)(x)=\int_{S^{2d-1}} f(x-ty)g(x-tz)d\sigma(y,z).
\]Recall also the region
\begin{equation} \label{eq: bdd region}
    \mathcal{R}(d)=\left\{
    \left(\frac{1}{p},\frac{1}{q},\frac{1}{r}\right)
    \colon 1< p,q<\infty, 0<r<\infty, \text{ and }\frac{1}{r}<\frac{1}{p}+\frac{1}{q}<m(d,r)\right \}
\end{equation}
where 
\begin{equation} \label{eq: upper bd pieces}
m(d,r)=\begin{cases}
    &\min\{1+\frac{d}{r},\frac{2d-1}{d},\frac{1}{r}+\frac{2(d-1)}{d}\},\text{ if }d\geq 3.\\
    & \min\{1+\frac{1}{r},\frac{3}{2}\}\text{ if }d=2.
    \end{cases}
\end{equation}

The main result of this section is the following.
\begin{prop}\label{prop: cont}
Let $d\geq 2$, and $(\frac{1}{p},\frac{1}{q},\frac{1}{r}) \in\mathcal{R}(d)$. There exists $\eta=\eta(p,q,r,d)>0$ such that
\[
\left\|\sup_{s\leq t\leq 2s} |\mathcal{A}_t(f, g-\tau_h g)|\right\|_{L^r}\leq C s^{d(\frac{1}{r}-\frac{1}{p}-\frac{1}{q})}\left( \frac{|h|}{s}\right)^\eta \|f\|_{L^p}\|g\|_{L^q}
\]for all $s>0$ and $|h|<s$.

Moreover, there exist positive constants $\eta_i=\eta_i(d,p,q,r)$, $i=1,2$, such that 
$$\left\|\sup_{s\leq t\leq 2s}|\mathcal{A}_t(f-\tau_{h_1}f,g-\tau_{h_2}g)|\right\|_{L^r}\leq C s^{d(\frac{1}{r}-\frac{1}{p}-\frac{1}{q})}\left( \frac{|h_1|}{s}\right)^{\eta_1}\left( \frac{|h_2|}{s}\right)^{\eta_2}\|f\|_{L^p}\|g\|_{L^q}$$
for all $s>0$ and $|h_1|<s,\,|h_2|<s$.
\end{prop}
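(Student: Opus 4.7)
By the standard scaling identity $\mathcal{A}_t(f,g)(x) = \mathcal{A}_{t/s}(f^s, g^s)(x/s)$ with $f^s(x) := f(sx)$, it suffices to prove both estimates at $s = 1$; tracking the $L^p$ norms and translation parameters through this change of variables produces the prefactor $s^{d(1/r - 1/p - 1/q)}$ and reduces the problem to $|h_i|/s < 1$. The key structural ingredient is the slicing identity obtained by parametrising $(y,z) \in S^{2d-1}$ as $(r\omega_1, \sqrt{1-r^2}\,\omega_2)$ with $r \in [0,1]$ and $\omega_1, \omega_2 \in S^{d-1}$:
\begin{equation*}
\mathcal{A}_t(f,g)(x) = c_d \int_0^1 r^{d-1}(1-r^2)^{(d-2)/2}\, \mathcal{A}_{linear, tr}f(x) \cdot \mathcal{A}_{linear, t\sqrt{1-r^2}}g(x) \, dr.
\end{equation*}
Replacing $g$ by $g - \tau_h g$ turns the second factor into a translation-difference of a linear spherical average, precisely the object controlled by Lacey's continuity estimate \cite{Lacey}. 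The non-locality flagged in the introduction manifests here as follows: when $r$ approaches $1$ the linear radius $t\sqrt{1-r^2}$ can shrink below $|h|$, invalidating the direct application of Lacey's estimate.

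For $d \geq 3$, the plan is to split the $r$-integral into dyadic shells $E_k := \{r \in [0,1] : \sqrt{1-r^2} \sim 2^{-k}\}$. On shells with $2^{-k} \gtrsim |h|$, Lacey's continuity estimate applied at scale $2^{-k}$ (via rescaling) bounds $\|\sup_{1 \leq t \leq 2}|\mathcal{A}_{linear, t\sqrt{1-r^2}}(g - \tau_h g)|\|_{L^{q_1}}$ by $(2^{-k})^{d(1/q_1 - 1/q)}(2^k|h|)^{\eta}\|g\|_{L^q}$; on shells with $2^{-k} \lesssim |h|$, I would discard the continuity gain and use only the $L^p$-improving bound at scale $2^{-k}$. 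Pairing this via Hölder ($\tfrac{1}{r} = \tfrac{1}{p_1} + \tfrac{1}{q_1}$) with the $L^{p_1}$ bound for $\sup_{1 \leq s \leq 2}\mathcal{A}_{linear, s}f$, and using $|E_k| \sim 2^{-2k}$ together with the weight $(1-r^2)^{(d-2)/2} \sim 2^{-k(d-2)}$, each dyadic piece is controlled by $2^{-k\beta}|h|^{\eta}\|f\|_{L^p}\|g\|_{L^q}$ with $\beta > 0$ throughout $\mathcal{R}(d)$ --- this is exactly where $d \geq 3$ is needed. Summing the geometric series yields the one-sided continuity estimate.

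The main obstacle is the case $d = 2$, where the weight $(1-r^2)^{(d-2)/2}$ collapses to $1$ and the dyadic sum narrowly fails to converge for the range of exponents needed to cover $\mathcal{R}(2)$. Here I would follow the strategy announced in the introduction: split the bilinear operator into two pieces, prove continuity estimates for each at different tuples of exponents, and interpolate to recover the full region $\mathcal{R}(2)$. A further change of variables recasts each piece so that what needs to be controlled is the difference of two linear spherical averages at nearby radii applied to the same input, rather than the difference of the same average applied to a translated input. This radius-difference is then bounded by Lemma \ref{lemma2d} --- a rescaled form of \cite[Proposition 4.2]{Lacey} --- in place of Lacey's continuity estimate. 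The combination of this Lemma \ref{lemma2d} input with the two-part decomposition and interpolation is the essential novelty of the $d=2$ analysis.

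Finally, the two-sided inequality is obtained by applying the slicing identity with $f \mapsto f - \tau_{h_1}f$ and $g \mapsto g - \tau_{h_2}g$ simultaneously, so that both factors on the right-hand side become continuity differences of linear spherical averages. A two-directional dyadic decomposition --- handling both $r \to 0$, where the $f$-radius $tr$ can fall below $|h_1|$, and $r \to 1$, the symmetric issue for $g$ --- combined with two applications of Lacey's continuity estimate (and the Lemma \ref{lemma2d} detour in $d=2$) produces the claimed product $(|h_1|/s)^{\eta_1}(|h_2|/s)^{\eta_2}$ of smallness factors after summation.
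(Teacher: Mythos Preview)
Your one-sided sketch for $d\geq 3$ is close in spirit to the paper's proof of Proposition~\ref{continuity}, but differs in one important detail. The paper does not pair the $f$-factor with the linear spherical maximal function via H\"older; instead it invokes the Jeong--Lee decomposition (which bounds the $f$-factor pointwise by $|f|*\chi_B$) to prove a continuity estimate only on a \emph{strict sub-region} of $\mathcal{R}(d)$ (Lemma~\ref{hlemma}, requiring $(1/q,1/r)\in\mathrm{int}(\Delta(d))$ and $1/r>1/q-(d-2)/d$), and then reaches the full $\mathcal{R}(d)$ by multilinear interpolation with the $L^p$-improving bound that carries no $|h|$ gain, followed by a further interpolation in the $r$-parameter to handle $r\leq d/(d-1)$. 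Your assertion that the dyadic sum converges with $\beta>0$ \emph{throughout} $\mathcal{R}(d)$ is not substantiated and is likely false without this interpolation step; the sub-region phenomenon is exactly why the paper needs it. Your $d=2$ outline matches the paper's Proposition~\ref{continuity2}.

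The main divergence is in the two-sided estimate. Your plan---simultaneous substitution $f\mapsto f-\tau_{h_1}f$, $g\mapsto g-\tau_{h_2}g$ in the slicing formula, followed by a two-directional dyadic decomposition handling both $r\to 0$ and $r\to 1$---is substantially harder than what the paper actually does, and in $d=2$ it would require threading the delicate $I_h/II_h$ argument through both variables at once. The paper's Proposition~\ref{prop: double} bypasses all of this: it linearises via $\kappa$, applies the one-sided estimate twice (once gaining $|h_1|^{\eta_1}$ at a nearby exponent pair $(p_1,q_1)$, once gaining $|h_2|^{\eta_2}$ at $(p_2,q_2)$, both with the same $r$), and then bilinearly interpolates between these two bounds to obtain the product $|h_1|^{\theta\eta_1}|h_2|^{(1-\theta)\eta_2}$. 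This is a three-line argument that requires no new slicing analysis whatsoever.
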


By rescaling, it suffices to consider the case $s=1$. In the rest of the section, we will thus consider only the localized bilinear spherical maximal function
$$\tilde{\mathcal{M}}(f,g)(x):=\sup_{1\leq t\leq 2} |\mathcal{A}_t(f,g)|.$$

We first consider the case $d\geq 3$, which is simpler.

\begin{prop}\label{continuity} ($\eta$ regularity condition for the bilinear spherical maximal operator for $d\geq 3$) Let $d\geq 3$, $1\leq p,q\leq \infty$, $0<r< \infty$. One has 
\begin{equation}\label{hestimate}
    \left\|\sup_{1\leq t\leq 2}|\mathcal{A}_t(f,g-\tau_h g)|\right\|_{L^r}\leq C |h|^{\eta}\|f\|_{L^p}\|g\|_{L^q}
\end{equation}
for all $|h|<1$, where $\eta=\eta(d,p,q,r)>0$, provided that 
     \[
     r>\dfrac{d}{2d-1} \text{ and }\, \left(\frac{1}{p},\frac{1}{q},\frac{1}{r}\right)\in \mathcal{R}(d),
     \]
where $\mathcal{R}(d)$ is given in \eqref{eq: bdd region}.
\end{prop}

\begin{rem} Note that defining 
$$m=\min\left\{1+\dfrac{d}{r}, \dfrac{2d-1}{d}, \dfrac{1}{r}+\dfrac{2(d-1)}{d}\right\},$$ then 

\begin{itemize}
    \item $m=\dfrac{2d-1}{d}$, \, if $\dfrac{1}{d}< \dfrac{1}{r}< \dfrac{2d-1}{d}$;
    \item $m=\dfrac{1}{r}+\dfrac{2(d-1)}{d}$, \, if $\dfrac{d-2}{d(d-1)}\leq\dfrac{1}{r}\leq \dfrac{1}{d}$;
    \item $m=1+\dfrac{d}{r}$, \, if $\dfrac{1}{r}< \dfrac{d-2}{d(d-1)}$.
\end{itemize}
\end{rem}

\begin{proof}[Proof of Proposition \ref{continuity}]

Note that the bound follows from the triangle inequality when $|h|\ge 1$, so we may assume $|h|$ is small. Given any measurable function $\kappa:\R^d\rightarrow [1,2]$, define
$$\tilde{\mathcal{A}}_{\kappa}(f,g)(x)=\int_{S^{2d-1}}f(x-\kappa(x)y)g(x-\kappa(x)z)d\sigma(y,z).$$

By linearization, all we need to show is that 
\begin{equation}\label{goal}
   \|\tilde{\mathcal{A}}_{\kappa}(f,g-\tau_{h}g)\|_r\lesssim |h|^{\eta}\|f\|_p\|g\|_q 
\end{equation}
for $(p,q,r)$ as in the proposition, with $\eta$ and the implicit constant independent of $\kappa$. 

    Recall the following decomposition from \cite[Proposition 3.2]{JL}:
    \begin{equation}\label{split}
        \tilde{\mathcal{A}}_{\kappa}(f,g)(x)\lesssim |f|*\chi_B(x) \left(\sum_{l=0}^{\infty} 2^{-l(\frac{d-2}{2})}\tilde{\mathcal{S}}[g(2^{-l/2}\cdot)](2^{l/2}x)\right), 
    \end{equation}
    where $B=B^{d}(0,2)$ and 
    \begin{equation} \label{loc sphere}
        \tilde{\mathcal{S}}f(x)=\sup_{1\leq t\leq 2}| \int_{S^{d-1}}f(x-ty)d\sigma(y)|
    \end{equation}
    denotes the localized linear spherical maximal function. This estimate is a consequence of slicing at each fixed $y$ and decomposing the resulting linear spherical average of $g$ into different dyadic scales.

    Hence, replacing $g$ by $g-\tau_h g$, one gets
     \begin{equation}\label{split2}
        \tilde{\mathcal{A}}_{\kappa}(f,g-\tau_h g)(x)\lesssim |f|*\chi_B(x) \left(\sum_{l=0}^{\infty} 2^{-l(\frac{d-2}{2})}\tilde{\mathcal{S}}[(g-\tau_h g)(2^{-l/2}\cdot)](2^{l/2}x)\right). 
    \end{equation}
    
    Following the notation of \cite{JL}, for $d\geq 2$, let $\Delta(d)$ denote the closed region which is the convex hull of the vertices
    \begin{equation}\label{vertices}
        \mathcal{B}_{1}^d=(0,0),\,\mathcal{B}_{2}^d=\left(\frac{d-1}{d},\frac{d-1}{d}\right),\, \mathcal{B}_{3}^d=\left(\frac{d-1}{d},\frac{1}{d}\right) \text{ and }\mathcal{B}_{4}^d=\left(\frac{d^2-d}{d^2+1},\frac{d-1}{d^2+1}\right)
    \end{equation}

    When $d=2$, $\mathcal{B}_2^2=\mathcal{B}_{3}^{2}=(1/2,1/2)$ so this region is actually a triangle. The relevance of the region $\Delta(d)$ is closely related to the $L^p$ improving region for $\tilde{\mathcal{S}}$ \cite{Lee, Schlag,SchlagSogge}, as we recall in the following proposition.

\begin{prop}\label{linearlpimproving}\cite[Theorem 1.1, Theorem 1.4]{Lee} Let $d\geq 3$ and $(1/q,1/r)\in \Delta(d)\backslash \{\mathcal{B}_2^d,\mathcal{B}_3^d,\mathcal{B}_4^d\}$, or $d=2$ and $(1/q,1/r)\in\Delta(2)\backslash \{(1/2,1/2),(2/5,1/5)\}$, then
\begin{equation}
    \|\tilde{\mathcal{S}}f\|_{r}\leq C\|f\|_q.
\end{equation}

\end{prop}
From (\ref{split2}) and the $L^{p}$ improving estimates of $\tilde{\mathcal{S}}$ from Proposition \ref{linearlpimproving} we will get the following lemma, which is a version of Proposition \ref{continuity} for a more restricted set of exponents $(p,q,r)$:
 \begin{lem}\label{hlemma}  Let $d\geq 3$, $1\leq p\leq \infty$ and $(1/q,1/r)\in \textnormal{int}(\Delta(d))$ such that 
 $$\dfrac{1}{r}>\dfrac{1}{q}-\dfrac{d-2}{d}. $$
 Then there exists $\eta=\eta(d,p,q,r)>0$ such that
\begin{equation}\label{hestimate2}
   \|\tilde{\mathcal{A}}_{\kappa}(f,g-\tau_h g)\|_r\lesssim |h|^{\eta}\|f\|_p\|g\|_q 
\end{equation}
for all $|h|<1$.
 \end{lem}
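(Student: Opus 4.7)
The plan is to leverage the decomposition \eqref{split2}, which already isolates the $f$-dependence in a linear convolution against $\chi_B$ and the $g$-dependence in a dyadic sum of rescaled localized linear spherical maximal operators. The only nontrivial input needed is Lacey's continuity $L^p$ improving estimate for $\tilde{\mathcal{S}}$, valid on $\textnormal{int}(\Delta(d))$, applied term-by-term and summed with care.

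First, I would dispose of the factor $|f|*\chi_B(x)$ by the uniform pointwise bound $(|f|*\chi_B)(x) \leq |B|^{1/p'}\|f\|_p$, obtained from H\"older's inequality since $\chi_B$ is bounded and compactly supported. This is adequate because all the cancellation we want to exploit sits in $g - \tau_h g$, not in $f$. Inserting this into \eqref{split2} reduces the lemma to
$$\left\|\sum_{l \geq 0} 2^{-l(d-2)/2}\, \tilde{\mathcal{S}}[G_l](2^{l/2}\cdot)\right\|_r \lesssim |h|^\eta \|g\|_q, \qquad G_l(y) := (g - \tau_h g)(2^{-l/2}y).$$
The second preparatory step is to rescale each term to Lacey's scale: the change of variable $u = 2^{l/2}x$ gives $\|\tilde{\mathcal{S}}[G_l](2^{l/2}\cdot)\|_r = 2^{-ld/(2r)}\|\tilde{\mathcal{S}}[G_l]\|_r$, and writing $g_l(y) := g(2^{-l/2}y)$ identifies $G_l$ with $g_l - \tau_{2^{l/2}h} g_l$, where $\|g_l\|_q = 2^{ld/(2q)}\|g\|_q$. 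This brings each summand to the scale on which Lacey's estimates on $\tilde{\mathcal{S}}$ apply directly.

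The crucial step is to split the dyadic sum at the threshold $L := -2\log_2 |h|$, according to whether the effective translation $2^{l/2}|h|$ is at most or greater than $1$. For $l \leq L$, Lacey's continuity estimate for $\tilde{\mathcal{S}}$ (applicable because $(1/q, 1/r) \in \textnormal{int}(\Delta(d))$) yields a gain of $(2^{l/2}|h|)^{\eta_0}$, and after incorporating all scaling factors the $l$-th contribution becomes $|h|^{\eta_0}\, 2^{l(\eta_0/2 + \gamma)} \|g\|_q$ where
$$\gamma := \frac{d}{2}\left(\frac{1}{q} - \frac{1}{r}\right) - \frac{d-2}{2}.$$
For $l > L$ the gain is no longer available, but the triangle inequality together with Lacey's unperturbed $L^p$ improving estimate provides the cruder bound $2^{l\gamma}\|g\|_q$ per term.

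The main obstacle, and what makes the lemma's hypothesis essential, is the summability of both series. The assumption $1/r > 1/q - (d-2)/d$ is exactly equivalent to $\gamma < 0$. It forces $\sum_{l > L} 2^{l\gamma} \lesssim 2^{L\gamma} = |h|^{-2\gamma}$, a positive power of $|h|$. Choosing $\eta_0$ small enough that $\eta_0/2 + \gamma < 0$ also makes the head sum $\sum_{l \leq L} 2^{l(\eta_0/2 + \gamma)}$ converge to an absolute constant, so the combined contribution is $\lesssim |h|^{\min(\eta_0,\,-2\gamma)}\|g\|_q$. Tracing back through the reduction then gives \eqref{hestimate2} with $\eta = \min(\eta_0,\,-2\gamma) > 0$.
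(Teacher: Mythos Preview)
Your proposal is correct and follows essentially the same approach as the paper's proof: both start from the decomposition \eqref{split2}, bound the $f$-factor by $\|f\|_p$ via H\"older, rescale each dyadic term to apply Lacey's estimates for $\tilde{\mathcal{S}}$, split the sum at the scale where $2^{l/2}|h|\sim 1$, use the continuity estimate on the head and the plain $L^p$ improving bound on the tail, and observe that the hypothesis $1/r>1/q-(d-2)/d$ is exactly the summability condition $\gamma<0$. The only cosmetic difference is that the paper phrases the exponent as $d-2+d/r-d/q>0$ and shrinks $\eta$ at the end rather than at the outset.
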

 
 \begin{figure}[h]
 \begin{center}
\scalebox{1}{
\begin{tikzpicture}

\draw (-0.3,3.8) node {$\frac{1}{r}$};
\draw (3.8,0) node {$\frac{1}{q}$};
\draw (3,-0.4) node {$1$};
\draw (2,-0.5) node{$\frac{d-1}{d}$};
\draw (-0.4,3) node {$1$};
\draw (-0.5,2) node{$\frac{d-1}{d}$};
\draw (-0.4,1) node {$\frac{1}{d}$};
\draw (1,-0.5) node {$\frac{d-2}{d}$};
\draw[red] (-0.6,0.5) node {$\frac{d-2}{d(d-1)}$};
\draw[red] (1.5,-1) node {$\frac{d-2}{d-1}$};
\draw (-0.3,-0.3) node{\small{$\mathcal{B}_{1}^d$}};
\draw (2.3,2.3) node{\small{$\mathcal{B}_{2}^d$}};
\draw (2.3,1) node{\small{$\mathcal{B}_{3}^d$}};
\draw (2.1,0.5) node {\small{$\mathcal{B}_{4}^d$}};

\draw[->,line width=1pt] (-0.2,0)--(3.5,0);
\draw[->,line width=1pt] (0,-0.2)--(0,3.5);
\draw[-, dashed, line width=1pt,gray] (2,0)--(2,3);

\draw[-,line width=1pt] (2,-0.1)--(2,0.1);
\draw[-,line width=1pt] (3,-0.1)--(3,0.1);
\draw[-,line width=1pt] (-0.1,2)--(0.1,2);
\draw[-,line width=1pt] (-0.1,3)--(0.1,3);
\draw[-,line width=1pt] (-0.1,1)--(0.1,1);
\draw[-,line width=1pt] (1,-0.1)--(1,0.1);
\draw[-,line width=1pt,red] (-0.1,0.5)--(0.1,0.5);
\draw[-,line width=1pt,red] (1.5,-0.1)--(1.5,0.1);

\draw[-,dashed, line width=0.8pt] (0,0)--(2,2);
\draw[-,dashed,line width=0.8pt, red] (1.5,-0.7)--(1.5,0.5);

\draw[-, dashed, line width=0.8pt] (2,1)--(2,2);
\draw[-, dashed, line width=0.8pt] (2,1)--(9/5,3/5);
\draw[-, dashed, line width=1pt] (0,0)--(9/5,3/5);

\fill[red!30!white] (0,0)--(1.5,0.5)--(2,1)--(2,2)--(0,0);
\draw[-,line width=1pt,red] (1,0)--(3,2);
\draw[red] (3.9,2.3) node {$\frac{1}{r}=\frac{1}{q}-\frac{d-2}{d}$};

\filldraw[black] (0,0) circle (1.5pt);
\filldraw[black] (2,2) circle (1.5pt);
\filldraw[black] (2,1) circle (1.5pt);
\filldraw[black] (9/5,3/5) circle (1.5pt);
\filldraw[red] (1.5,0.5) circle (1.5pt);

\draw[-,dashed,line width=0.8pt, red] (0,0.5)--(1.5,0.5);
\end{tikzpicture}}
\end{center}
\caption{The red region in the figure above illustrates the region where the pairs $(1/q,1/r)$ in Lemma \ref{hlemma} live.}
\label{fig1}
\end{figure}
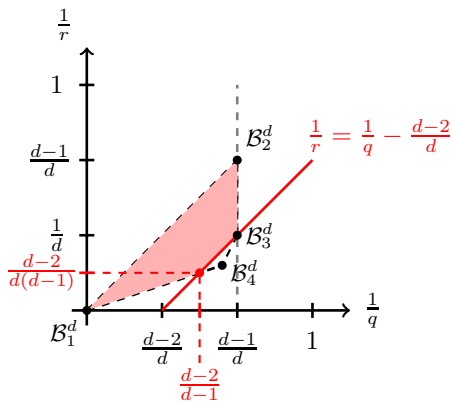

We will defer the proof of Lemma $\ref{hlemma}$ for now and instead see how it implies Proposition \ref{continuity}. By the boundedness properties of $\tilde{\mathcal{A}}_{\kappa}$ given in \cite[Proposition 3.2]{JL},
\begin{equation}\label{nohgain}
    \begin{split}
        \|\tilde{\mathcal{A}}_{\kappa}(f,g-\tau_h g)\|_r&\lesssim \|\tilde{\mathcal{A}}_{\kappa}(f,g)\|_r+\|\tilde{\mathcal{A}}_{\kappa}(f,\tau_h g)\|_r\\
        &\lesssim \|f\|_p\|g\|_q
    \end{split}
\end{equation}
as long as $1/r\leq 1/p+1/q< m$, where $m=m(d,r)$ as in \eqref{eq: upper bd pieces}.

Lemma \ref{hlemma} gives us an open region inside the half-space $\{r>d/(d-1)\}$ where the desired continuity estimate holds. We can use bilinear interpolation with the estimate above (which has no $|h|$ gain) and we will get continuity estimates of the form 
$$\|\tilde{\mathcal{A}}_{\kappa}(f,g-\tau_h g)\|_r\lesssim |h|^{\eta} \|f\|_p\|g\|_q,$$
in the whole open region $1/r< 1/p+1/q< m$.

Inspired by Figure \ref{fig1} we see that it is interesting to subdivide the points that one gets from Lemma \ref{hlemma} in three sub-regions, according to the value of $1/r$, to have a more clear description of the points $(1/p,1/q,1/r)$ that one is getting from that lemma.
In each of the following figures, the red region represents the continuity region given by Lemma \ref{hlemma}, and the dashed contour in blue is the boundary of the open region $1/r<1/p+1/q<m(d,r)$, for which we will get the proposition via interpolation with (\ref{nohgain}).

\begin{enumerate}
    \item \textit{Case 1:}$\quad\frac{1}{d}<\frac{1}{r}<\frac{d-1}{d}$.
    
    In this case $(\frac{1}{q}, \frac{1}{r})\in \text{int}(\Delta(d))$ with $ \frac{1}{r}>\frac{1}{q}-\frac{d-2}{d} $  if and only if $\frac{1}{r}< \frac{1}{q}<\frac{d-1}{d}$.
    
    \begin{center}
         \scalebox{0.9}{
\begin{tikzpicture}

\fill[blue!10!white] (0,1.5)--(1.5,0)--(3,0)--(3,2)--(2,3)--(0,3)--(0,1.5);
\fill[red!30!white] (0,1.5)--(0,2)--(3,2)--(3,1.5)--(0,1.5);
\draw[red,dashed,line width=1pt](0,1.5)--(0,2)--(3,2)--(3,1.5)--(0,1.5);

\draw (-0.3,3.8) node {$\frac{1}{q}$};
\draw (3.8,0) node {$\frac{1}{p}$};
\draw (3,-0.4) node {$1$};
\draw (-0.4,3) node {$1$};
\draw (-0.4,1) node {$\frac{1}{d}$};
\draw (1,-0.5) node {$\frac{1}{d}$};
\draw (1.5,-0.4) node {$\frac{1}{r}$};
\draw[red] (-0.4,1.5) node {$\frac{1}{r}$};
\draw[red] (-0.5,2) node{$\frac{d-1}{d}$};
\draw (2,-0.4) node{$\frac{d-1}{d}$};

\draw[->,line width=1pt] (-0.2,0)--(3.5,0);
\draw[->,line width=1pt] (0,-0.2)--(0,3.5);

\draw[-,line width=1pt] (3,-0.1)--(3,0.1);
\draw[-,line width=1pt] (-0.1,3)--(0.1,3);
\draw[-,line width=1pt] (-0.1,2)--(0.1,2);
\draw[-,line width=1pt] (2,-0.1)--(2,0.1);
\draw[-,line width=1pt] (-0.1,1)--(0.1,1);
\draw[-,line width=1pt] (1,-0.1)--(1,0.1);

\draw[-,line width=1pt] (-0.1,1.5)--(0.1,1.5);
\draw[-,line width=1pt] (1.5,-0.1)--(1.5,0.1);

\filldraw[black] (0,0) circle (1.5pt);

\draw[dashed,blue,line width=1pt] (0,1.5)--(1.5,0)--(3,0)--(3,2)--(2,3)--(0,3)--(0,1.5);

\draw (3.7,2.7) node {\small{$\frac{1}{p}+\frac{1}{q}=\frac{2d-1}{d}$}};

\end{tikzpicture}}
\end{center}
    
    \item \textit{Case 2:} $\quad\frac{d-2}{d(d-1)}\leq \frac{1}{r}\leq \frac{1}{d}$.
    
   In this case, $(\frac{1}{q}, \frac{1}{r})\in \text{int}(\Delta(d))$ with $ \frac{1}{r}>\frac{1}{q}-\frac{d-2}{d} $  if and only if $\frac{1}{r}<\frac{1}{q}<\frac{1}{r}+\frac{d-2}{d}$.
    
       \begin{center}
         \scalebox{0.9}{
\begin{tikzpicture}

\fill[blue!10!white] (0,0.75)--(0.75,0)--(3,0)--(3,1.75)--(1.75,3)--(0,3)--(0,0.75);
\fill[red!30!white] (0,0.75)--(0,1.75)--(3,1.75)--(3,0.75)--(0,0.75);
\draw[red,dashed,line width=1pt](0,0.75)--(0,1.75)--(3,1.75)--(3,0.75)--(0,0.75);

\draw (-0.3,3.8) node {$\frac{1}{q}$};
\draw (3.8,0) node {$\frac{1}{p}$};
\draw (3,-0.4) node {$1$};
\draw (-0.4,3) node {$1$};
\draw (-0.4,1) node {$\frac{1}{d}$};
\draw (1,-0.4) node {$\frac{1}{d}$};
\draw (0.75,-0.4) node {$\frac{1}{r}$};
\draw[red] (-2,0.75) node {$\frac{1}{r}$};
\draw (-0.4,2) node{$\frac{d-1}{d}$};
\draw (2,-0.4) node{$\frac{d-1}{d}$};
\draw[red] (-2,1.75) node {$\frac{1}{r}+\frac{d-2}{d}$};
\draw(-0.6,0.5) node {\small{$\frac{d-2}{d(d-1)}$}};

\draw[->,line width=1pt] (-0.2,0)--(3.5,0);
\draw[->,line width=1pt] (0,-0.2)--(0,3.5);
\draw[-,line width=1pt, red, dashed] (-1.2,1.75)--(3.4,1.75);
\draw[-,line width=1pt, red, dashed] (-1.2,0.75)--(3.4,0.75);

\draw[-,line width=1pt] (3,-0.1)--(3,0.1);
\draw[-,line width=1pt] (-0.1,3)--(0.1,3);
\draw[-,line width=1pt] (-0.1,2)--(0.1,2);
\draw[-,line width=1pt] (2,-0.1)--(2,0.1);
\draw[-,line width=1pt] (-0.1,1)--(0.1,1);
\draw[-,line width=1pt] (1,-0.1)--(1,0.1);
\draw[-,line width=1pt] (-0.1,0.75)--(0.1,0.75);
\draw[-,line width=1pt] (0.75,-0.1)--(0.75,0.1);
\draw[-,line width=1pt] (-0.1,1.75)--(0.1,1.75);
\draw[-,line width=1pt] (1.75,-0.1)--(1.75,0.1);
\draw[-,line width=1pt] (-0.1,0.5)--(0.1,0.5);

\filldraw[black] (0,0) circle (1.5pt);

\draw[dashed,blue,line width=1pt] (0,0.75)--(0.75,0)--(3,0)--(3,1.75)--(1.75,3)--(0,3)--(0,0.75);

\draw (4,2.5) node {\small{$\frac{1}{p}+\frac{1}{q}=1+\frac{1}{r}+\frac{d-2}{d}$}};

\end{tikzpicture}}
\end{center}
    
    \item \textit{Case 3:}$\quad \frac{1}{r}< \frac{d-2}{d(d-1)}$.
    
     In this case, $(\frac{1}{q}, \frac{1}{r})\in \text{int}(\Delta(d))$ with $ \frac{1}{r}>\frac{1}{q}-\frac{d-2}{d} $  if and only if $\frac{1}{r}<\frac{1}{q}<\frac{d}{r}$.
    
           \begin{center}
         \scalebox{0.9}{
\begin{tikzpicture}

\fill[blue!10!white] (0,0.25)--(0.25,0)--(3,0)--(3,0.75)--(0.75,3)--(0,3)--(0,0.25);
\fill[red!30!white] (0,0.25)--(0,0.75)--(3,0.75)--(3,0.25)--(0,0.25);
\draw[red,dashed,line width=1pt](0,0.25)--(0,0.75)--(3,0.75)--(3,0.25)--(0,0.25);

\draw (-0.3,3.8) node {$\frac{1}{q}$};
\draw (3.8,0) node {$\frac{1}{p}$};
\draw (3,-0.4) node {$1$};
\draw (-0.4,3) node {$1$};
\draw (0.25,-0.4) node {$\frac{1}{r}$};
\draw[red] (-1.6,0.75) node {$\frac{d}{r}$};
\draw (0.75,-0.5) node {$\frac{d}{r}$};
\draw[red] (-1.6,0.25) node {$\frac{1}{r}$};
\draw(-0.6,0.5) node {\small{$\frac{d-2}{d(d-1)}$}};

\draw[->,line width=1pt] (-0.2,0)--(3.5,0);
\draw[->,line width=1pt] (0,-0.2)--(0,3.5);
\draw[-,line width=1pt, red, dashed] (-1.2,0.25)--(3.4,0.25);
\draw[-,line width=1pt, red, dashed] (-1.2,0.75)--(3.4,0.75);

\draw[-,line width=1pt] (3,-0.1)--(3,0.1);
\draw[-,line width=1pt] (-0.1,3)--(0.1,3);
\draw[-,line width=1pt] (-0.1,0.75)--(0.1,0.75);
\draw[-,line width=1pt] (0.25,-0.1)--(0.25,0.1);
\draw[-,line width=1pt] (0.75,-0.1)--(0.75,0.1);

\draw[-,line width=1pt] (-0.1,0.5)--(0.1,0.5);

\filldraw[black] (0,0) circle (1.5pt);

\draw[dashed,blue,line width=1pt] (0,0.25)--(0.25,0)--(3,0)--(3,0.75)--(0.75,3)--(0,3)--(0,0.25);

\draw (2.8,2.5) node {\small{$\frac{1}{p}+\frac{1}{q}=1+\frac{d}{r}$}};

\end{tikzpicture}}
\end{center}
\end{enumerate}

So far multilinear interpolation for the parameters $p,q$ led us to 
\begin{equation}\label{rbig}
    \|\tilde{\mathcal{A}}_{\kappa}(f,g-\tau_h g)\|_r\lesssim |h|^{\eta} \|f\|_p\|g\|_q
\end{equation}
when $1/r<1/p+1/q<m(d,r)$ and $1/r<\frac{d-1}{d}$.

Notice that until this point the parameter $r$ hasn't played a role in the multilinear interpolation, and we still didn't prove the continuity estimate (\ref{hestimate2}) for the case $\frac{d-1}{d}\leq \frac{1}{r}<\frac{2d-1}{d}$. To address this, take $(p,q,r)$ with $\frac{d-1}{d}\leq \frac{1}{r}<\frac{2d-1}{d}$ and $\frac{1}{r}<\frac{1}{p}+\frac{1}{q}<\frac{2d-1}{d}=m(d,r)$. We want to show that 
$$\|\tilde{\mathcal{A}}_{\kappa}(f,g-\tau_h g)\|_{r}\lesssim |h|^{\eta}\|f\|_{p}\|g\|_{q}.$$

Denote $s:=\frac{1}{p}+\frac{1}{q}\in (\frac{1}{r},\frac{2d-1}{d})$. 

\begin{figure}[h]
 \begin{center}
         \scalebox{0.9}{
\begin{tikzpicture}

\fill[blue!10!white] (0,2.5)--(2.5,0)--(3,0)--(3,2)--(2,3)--(0,3)--(0,2.5);

\draw (-0.3,3.7) node {$y$};
\draw (3.7,-0.1) node {$x$};
\draw (3,-0.4) node {$1$};
\draw (-0.4,3) node {$1$};
\draw (2.5,-0.4) node {$\frac{1}{r}$};
\draw (-0.4,2.5) node {$\frac{1}{r}$};
\draw (-0.5,2) node{$\frac{d-1}{d}$};
\draw (1.3,-0.4) node{$\frac{1}{d}$};
\draw (-0.4,1.3) node{$\frac{1}{d}$};
\draw (2,-0.4) node{$\frac{d-1}{d}$};

\draw[->,line width=1pt] (-0.2,0)--(3.5,0);
\draw[->,line width=1pt] (0,-0.2)--(0,3.5);

\draw[-,line width=1pt] (3,-0.1)--(3,0.1);
\draw[-,line width=1pt] (-0.1,3)--(0.1,3);
\draw[-,line width=1pt] (-0.1,2)--(0.1,2);
\draw[-,line width=1pt] (2,-0.1)--(2,0.1);
\draw[-,line width=1pt] (-0.1,2.5)--(0.1,2.5);
\draw[-,line width=1pt] (2.5,-0.1)--(2.5,0.1);

\draw[-,line width=1pt] (1.3,-0.1)--(1.3,0.1);
\draw[-,line width=1pt] (-0.1,1.3)--(0.1,1.3);

\draw[-,line width=1pt, red] (3,3/4)--(3/4,3);
\draw[red] (4,0.8) node {\small{$x+y=s$}};
\draw[orange] (4.2,0.3) node {\small{$x+y=\frac{1}{r_2}$}};
\draw[green!70!black] (-0.2,0.8) node {\small{$x+y=\frac{1}{r_1}$}};

\draw[-,line width=1pt, orange] (3,0.2)--(0.2,3);
\draw[-,line width=1pt, green!70!black] (1.7,0)--(0,1.7);

\filldraw[black] (0,0) circle (1.5pt);
\filldraw[red] (9/4,3/2) circle (1.5pt);
\draw[red] (2.7,1.7) node {$(\frac{1}{p},\frac{1}{q})$};
\draw[dashed,blue,line width=1pt] (0,2.5)--(2.5,0)--(3,0)--(3,2)--(2,3)--(0,3)--(0,2.5);

\end{tikzpicture}}
\end{center}
\caption{Figure illustrating how interpolation in the parameter $r$ gives us continuity estimates when $\frac{d-1}{d}\leq \frac{1}{r}<\frac{2d-1}{d}$.}
\end{figure}
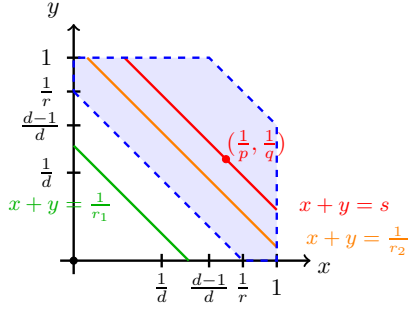
Pick $0<r_1,r_2< \infty$ satisfying $\frac{d}{2d-1}<r_2<r\leq \frac{d}{d-1}<r_1< d$. We may also choose $r_2$ such that $\frac{1}{r_2}<s $. Since $\frac{1}{d}<\frac{1}{r_1}<\frac{d-1}{d}$, and $\frac{1}{r_1}<s=\frac{1}{p}+\frac{1}{q}<\frac{2d-1}{d}=m(d,r_1)$, one has by (\ref{rbig}) that
$$\|\tilde{\mathcal{A}}_{\kappa}(f,g-\tau_h g)\|_{r_1}\lesssim |h|^{\eta_1}\|f\|_{p}\|g\|_{q}.$$

Also, since we chose $r_2$ so that $\frac{1}{r_2}<s$, $(1/p,1/q)$ satisfies $\frac{1}{r_2}<\frac{1}{p}+\frac{1}{q}<\frac{2d-1}{d}=m(d,r_2)$, and (\ref{nohgain}) gives the boundedness estimate
$$\|\tilde{\mathcal{A}}_{\kappa}(f,g-\tau_h g)\|_{r_2}\lesssim \|f\|_{p}\|g\|_{q}.$$

Since $r_2<r<r_1$, we get by interpolating those two estimates that 
$$\|\tilde{\mathcal{A}}_{\kappa}(f,g-\tau_h g)\|_{r}\lesssim |h|^{\eta}\|f\|_{p}\|g\|_{q} $$
where $\frac{1}{r}=(1-\theta)\frac{1}{r_1}+\theta \frac{1}{r_2}$, and $\eta=(1-\theta)\eta_1 $.

\end{proof}

\begin{proof}[Proof of Lemma \ref{hlemma}]
By (\ref{split2}) and H\"older's inequality,
\begin{equation*}
    \begin{split}
        \|\tilde{\mathcal{A}}_{\kappa}(f,g-\tau_h g)\|_r\lesssim & \||f|*\chi_B(x)\sum_{l=0}^{\infty}2^{-l(\frac{d-2}{2})} \tilde{\mathcal{S}}[(g-\tau_h g)(2^{-l/2}\cdot)](2^{l/2}x)\|_r\\
        \leq &\||f|*\chi_B\|_{\infty} \sum_{l=0}^{\infty}2^{-l(\frac{d-2}{2})}\|\tilde{\mathcal{S}}[(g-\tau_h g)(2^{-l/2}\cdot)](2^{l/2}x)\|_r\\
        \leq &\|f\|_p \sum_{l=0}^{\infty}2^{-l(\frac{d-2}{2})}2^{-l/2\cdot d/r}\|\tilde{\mathcal{S}}[(g-\tau_h g)(2^{-l/2}\cdot)]\|_r.
    \end{split}
\end{equation*}

Denote $g_l(x)=g(2^{-\frac{l}{2}}x)$, then 
$$(\tau_h g)(2^{\frac{-l}{2}}x)=g(2^{\frac{-l}{2}}x-h)=g(2^{\frac{-l}{2}}(x-2^{\frac{l}{2}}h))=\tau_{2^{\frac{l}{2}}h}g_l(x)$$
and we get
\begin{equation*}
\begin{split}
    \|\tilde{\mathcal{A}}_{\kappa}(f,g-\tau_h g)\|_r\lesssim  \|f\|_p \sum_{l=0}^{\infty}2^{-\frac{l}{2}(d-2+\frac{d}{r})}\|\tilde{\mathcal{S}}[g_l-\tau_{2^{\frac{l}{2}}h} g_l]\|_r.
\end{split}
\end{equation*}

Now we consider two cases for $l\geq 0$. We will prove a continuity estimate for each and then choose $\eta$ to be the minimum of the values in the cases.

\begin{itemize}
    \item \textit{First case:} $|2^{\frac{l}{2}}h|< 1$, i.e., $l< 2 \log_2(\frac{1}{|h|})$. Denote $L:=\lfloor2 \log_2(\frac{1}{|h|})\rfloor$.
    
    For this case, we use the known continuity estimates for $\tilde{\mathcal{S}}$, proved in \cite{Lacey}. Since $(1/q,1/r)\in \textnormal{int}(\Delta(d))$, one has
    \begin{equation*}
\begin{split}
    \|\tilde{\mathcal{S}}[g_l-\tau_{2^{\frac{l}{2}}h} g_l]\|_r\lesssim 2^{\frac{l\eta}{2}}|h|^{\eta} \| g_l\|_q=2^{\frac{l\eta}{2}}|h|^{\eta} 2^{\frac{l}{2}\cdot \frac{d}{q}}\| g\|_q.
\end{split}
\end{equation*}
Thus,
\begin{equation*}
    \begin{split}
        \|f\|_p\sum_{l=0}^{L} 2^{-\frac{l}{2}(d-2+d/r)}&\|\tilde{\mathcal{S}}[g_l-\tau_{2^{\frac{l}{2}}h} g_l]\|_r\\
        &\lesssim  |h|^{\eta}\|f\|_p\sum_{l=0}^{L} 2^{-\frac{l}{2}(d-2+d/r-d/q-\eta)}\|g\|_q\\
       & \lesssim |h|^{\eta}\|f\|_p\|g\|_q,
    \end{split}
\end{equation*}
    where we used that by hypothesis $d-2+d/r-d/q>0$.

    \item \textit{Second case:} $|2^{\frac{l}{2}}h|\geq 1$, i.e., $l\geq  2 \log_2(\frac{1}{|h|})$.
    
    Since $(1/q,1/r)\in \text{int}(\Delta(d))$, by the known bounds for the localized operator $\tilde{\mathcal{S}}$ in Proposition \ref{linearlpimproving} one has
    
    \begin{equation*}
    \begin{split}
        &\|f\|_p\sum_{l\geq L} 2^{-\frac{l}{2}(d-2+d/r)}\|\tilde{\mathcal{S}}[g_l-\tau_{2^{\frac{l}{2}}h} g_l]\|_r\\
        \lesssim&\|f\|_p\sum_{l\geq L} 2^{-\frac{l}{2}(d-2+d/r)}\left(\|\tilde{\mathcal{S}}g_l\|_r+\|\tilde{\mathcal{S}}[\tau_{2^{\frac{l}{2}}h} g_l]\|_r\right)\\
        \lesssim & \|f\|_p\sum_{l\geq L} 2^{-\frac{l}{2}(d-2+d/r)}\|g_l\|_q\\
         \lesssim & \|f\|_p\sum_{l\geq L} 2^{-\frac{l}{2}(d-2+d/r-d/q)}\|g\|_q\\
         \lesssim &2^{-\frac{LM}{2} }\|f\|_p\|g\|_q,
         \end{split}
\end{equation*}where $M=d-2+d/r-d/q>0$. Hence, this term is bounded by $|h|^{M}\|f\|_p \|g\|_q.$
\end{itemize}

\end{proof}
\begin{rem}
One can simplify the interpolation argument in the proof of Proposition \ref{continuity} by directly using interpolation for multi-sublinear forms of generalized restricted type. Since the region of known $L^p$ improving estimate $1/r<1/p+1/q<m(d,r)$ is a convex domain in $\mathbb{R}^3$ (being the intersection of half-spaces) and $p, q>1$, the generalized restricted type bounds obtained by bilinear interpolation automatically upgrade to the desired $L^p$ space bounds. We refer to \cite[Chapter 3]{Thiele} and \cite[Proposition 2.2]{BOS} for details of multilinear interpolation. 
\end{rem}

\begin{rem} If we revisit the proof of Lacey's continuity estimate in the case $d\geq 3$, one can see that one can actually get continuity estimates in the open segment in the boundary of $\Delta(d)$ that connects $(0,0)$ to $(\frac{d-1}{d}, \frac{d-1}{d})$. That is because the continuity estimate is proved at $(1/2,1/2)$ which lives in the interior of that segment (the same is not true for $d=2$). This can allow us to improve the continuity estimates to include $\frac{1}{p}+\frac{1}{q}=\frac{1}{r}$, but for simplicity we will be mostly interested in proving the sparse domination bounds in an open region. Indeed, the boundary points of the range of exponents usually don't seem to change most of the implications of sparse bounds.
\end{rem}

We now consider the $d=2$ case of the continuity estimate. Note that the argument in the $d\geq 3$ case above doesn't apply anymore, as one would end up with a divergent sum. This technical difficulty stems from the fact that the spherical averaging operator becomes more singular as the dimension gets lower. We first collect two preliminary lemmas relating to linear spherical maximal functions.
\begin{lem}\label{lem: annulus}
Let $d\geq 2$, and $C\geq 1$. For exponents satisfying $(1/q,1/s)\in \text{int}(\Delta(d))$, we have the bound
\begin{equation}
    \left\Vert \sup_{t\in [1,2]} \int_{1-C|h|\le |y|\le 1+C|h|} |g(x-ty)|\,dy\right\Vert_{L^s} \lesssim |h| \Vert g\Vert_{L^q},\,\text{for all }|h|\ll1.
\end{equation}
\end{lem}
\begin{proof}
Let $\mathcal{S}_{L}$ denote the local spherical maximal operator across scales $[1/2,3]$, for instance.
That is, 
$$\mathcal{S}_{L}(g)(x)=\sup_{t\in [1/2,3]} \int_{S^{d-1}}|g(x-ty)|dy.$$ Obviously, $\mathcal{S}_L$ satisfies the same $L^q\to L^s$ bound as $\tilde{\mathcal{S}}$ (as defined in \eqref{loc sphere}).

Then, using polar coordinates, we have for any $|h|\leq \frac{1}{2C}$,
\begin{align*}
    &\left\Vert \sup_{t\in [1,2]} \int_{1-C|h|\le |y|\le 1+C|h|} |g(x-ty)|\,dy\right\Vert_{L^s} \\
    =& \left\Vert \sup_{t\in [1,2]} \int_{1-C|h|}^{1+C|h|}\int_{S^{d-1}} |g(x-tr\theta)|r^{d-1}\,d\sigma(\theta)\,dr\right\Vert_{L^s} \\
    \lesssim& \|\sup_{t\in [1,2]} \int_{1-C|h|}^{1+C|h|}\mathcal{S}_{L}g(x)\,dr\|_{L^s}\\
    \lesssim & \left\Vert |h| \mathcal{S}_Lg\right\Vert_{L^s} \lesssim |h| \Vert g\Vert_{L^q}.
\end{align*}
Here, we used that if $t\in [1,2]$, $r\in [1-C|h|,1+C|h|]$, and $|h|\leq \frac{1}{2C}$, then $tr\in [1/2,3]$.
\end{proof}

We also will find it useful to rescale a result of Lacey \cite[Proposition 4.2]{Lacey}.
\begin{lem}\label{lemma2d}
Fix $(\frac{1}{p},\frac{1}{r})$ in the interior of $F_2'=\Delta(2)$. Then there exists $\eta>0$ such that for all $0<\gamma<1/2$ and all $0<\epsilon \leq 1$, we have
\begin{equation}
    \left\Vert \sup_{s,t\in[1,2],|s-t|<\gamma} |A_{\epsilon s}f-A_{\epsilon t}f|\right\Vert_{L^r} \lesssim \gamma^{\eta}\epsilon^{2/r-2/p}\Vert f\Vert_{L^p}.
\end{equation}
\end{lem}
\begin{proof}
From \cite[Proposition 4.2]{Lacey}, we know that this holds when $\epsilon=1$. Adopt the notation $\delta_{\epsilon}g(u)=g(\epsilon u)$ for convenience. Then we have the identity
\begin{equation}
    |A_{\epsilon s}f-A_{\epsilon t}f|(x)=|A_s\delta_{\epsilon}f-A_t\delta_{\epsilon}f|(x/\epsilon).
\end{equation}
Taking the supremum, taking an $L^r$ norm and changing variables, we can apply the scale $1$ result of Lacey to deduce
\begin{equation}
    \left\Vert \sup_{s,t\in[1,2],|s-t|<\gamma} |A_{\epsilon t}f-A_{\epsilon s}f|\right\Vert_r \lesssim \gamma^{\eta}\epsilon^{2/r}\Vert \delta_{\epsilon}f\Vert_p.
\end{equation}
Then changing variables on the right side of the estimate immediately gives the claim.
\end{proof}

Now, we are ready to prove the key two-dimensional estimate. The proposition below shows that one still has a quite satisfactory continuity estimate when $d=2$.

\begin{prop}\label{continuity2} ($\eta$ regularity condition for the bilinear spherical maximal operator for $d= 2$) Let $d= 2$, $1\leq p,q\leq \infty$, $0<r< \infty$. One has that there exists $\eta=\eta(p,q,r)>0$ such that
\begin{equation}\label{hestimate3}
    \left\|\sup_{1\leq t\leq 2}|\mathcal{A}_t(f,g-\tau_h g)|\right\|_{L^r}\leq C |h|^{\eta}\|f\|_{L^p}\|g\|_{L^q}
\end{equation}
for all $|h|<1$,  provided that $r>\dfrac{2}{3}$ and
\begin{equation*}
    \begin{split}
        \dfrac{1}{r}< \dfrac{1}{p}+\dfrac{1}{q}< \min\left\{1+\dfrac{1}{r}, \dfrac{3}{2}\right\}=\begin{cases}
    &3/2,\text{ if } r\leq 2,\\
    &1+1/r,\text{ if }r>2
     \end{cases}
    \end{split}.
\end{equation*}

\end{prop}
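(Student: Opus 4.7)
The $d=2$ case requires a genuinely different approach from the one used for $d\geq 3$, because the slicing-plus-dyadic-scale argument breaks down: the weight $(t^2-|y|^2)^{(d-2)/2}$ in the slicing formula (\ref{split}) is identically $1$ when $d=2$, so summing contributions from the dyadic pieces $\tilde{\mathcal{S}}[g(2^{-l/2}\cdot)](2^{l/2}x)$ produces a divergent series. The plan is instead to break the operator into two pieces, prove a continuity estimate for each at a different suitable tuple of exponents, and then interpolate to fill in the open region $\mathcal{R}(2)$.

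First, I would slice in the $y$-variable and perform a polar-type change of variables to rewrite
\[
\mathcal{A}_t(f, g - \tau_h g)(x) \sim t^{-2} \int_0^t \rho\, \mathcal{A}_{linear, \sqrt{t^2-\rho^2}}(f)(x) \cdot \big[\mathcal{A}_{linear, \rho}(g)(x) - \mathcal{A}_{linear, \rho}(g)(x-h)\big]\, d\rho,
\]
so that the inner sphere radius $\rho$ becomes the free variable. Taking the sup in $t \in [1,2]$ inside the integral (by positivity and the mild $t$-dependence of the $f$-factor), it becomes natural to split the $\rho$-integral into a \emph{regular} piece on $\rho \in (|h|,\,t]$ and a \emph{singular} piece on $\rho \in (0, |h|]$; these are the ``two parts'' of the operator referred to in the introduction.

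For the regular piece, I would dyadically decompose $\rho \sim 2^{-k}$ for $2^{-k} \gtrsim |h|$ and apply at each scale the rescaled linear continuity estimate provided by Lemma \ref{lemma2d} (the rescaled version of \cite[Proposition 4.2]{Lacey}). This delivers, at each dyadic scale, a gain of $(|h|/\rho)^{\eta_0}$, and summing over the $O(\log(1/|h|))$ relevant scales (absorbing the logarithmic loss by shrinking $\eta_0$) produces the desired continuity estimate at one tuple $(p_1,q_1,r_1)$ in $\mathcal{R}(2)$. For the singular piece, the $\rho$-range has measure only $|h|$, but the linear continuity gain is no longer available since $|h|/\rho \geq 1$. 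Here a second change of variables reinterprets the small-$\rho$ contribution as a difference of spherical averages evaluated at two nearby but \emph{different} radii, a quantity again amenable to Lemma \ref{lemma2d} combined with the standard $L^p$-improving bounds for the linear localized spherical maximal operator. This yields a continuity estimate, with a smaller gain, at a \emph{different} tuple $(p_2, q_2, r_2)$.

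Finally, multilinear interpolation between the two endpoint continuity estimates (using the framework in \cite[Chapter 3]{Thiele} as already done above in the $d\geq 3$ case) recovers the full open region $\frac{1}{r} < \frac{1}{p} + \frac{1}{q} < \min\{1+\frac{1}{r}, 3/2\}$ for $r > 2/3$. The main obstacle is the singular piece: when $\rho \lesssim |h|$, the linear spherical average cannot distinguish $g$ from $\tau_h g$, so the translation gain cannot be inherited pointwise from the linear theory. The change-of-variable trick recasting the translation as a radius perturbation is the key new input, and it is precisely the asymmetry between the two regimes—one needing a rescaled Lacey estimate, the other needing a ``different radii'' version—that forces two distinct endpoint tuples and an interpolation step, rather than the single unified argument that works in $d \geq 3$.
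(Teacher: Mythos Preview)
Your high-level architecture---split into two pieces, prove continuity at different tuples for each, then interpolate---matches the paper. But the specific decomposition you describe is not the paper's, and as written it has real gaps.

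First, the split is at the wrong end. You separate $\rho\le |h|$ from $\rho>|h|$, i.e.\ you isolate the regime where the $g$-sphere is tiny. The paper instead performs the change of variables \emph{first}---replacing $y$ by $y+h/\kappa(x)$ in the $\tau_h g$ term, which moves the translation in $g$ into a perturbation of the \emph{radius} of the $f$-average---and only \emph{then} splits into an annular piece $I_h$ (where $1-|h|\le |y|\le 1+|h|$) and a bulk piece $II_h$ (where $|y|<1-|h|$). The annular piece is controlled by the thin-shell measure $\sim|h|$ together with the local spherical maximal bound; the bulk piece is a genuine difference $A_{r_1}f-A_{r_2}f$ of linear averages at nearby radii, to which Lemma~\ref{lemma2d} applies after a further dyadic decomposition in $1-|y|$. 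Your ``second change of variables reinterpreting the small-$\rho$ contribution as a difference of averages at different radii'' is the paper's global first step, not a local fix for one piece.

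Second, you mis-cite Lemma~\ref{lemma2d} for your regular piece: that lemma bounds $\sup_{|s-t|<\gamma}|A_{\epsilon s}f-A_{\epsilon t}f|$, a \emph{radii-difference} estimate, and does not deliver a translation gain $(|h|/\rho)^{\eta_0}$. What you would actually need there is Lacey's rescaled \emph{translation} continuity for $\tilde{\mathcal S}$. Moreover, your regular piece $\rho\in(|h|,t]$ contains the regime $\rho$ near $t$, where the $f$-sphere radius $\sqrt{t^2-\rho^2}\to 0$; you give no mechanism for this (bounding by $\mathcal M_{linear}f$ forces $p>2$, which you do not isolate). Finally, ``taking the sup in $t$ inside by positivity'' fails because $g-\tau_h g$ changes sign; the paper linearizes via $t=\kappa(x)$ at the outset to avoid exactly this.
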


\begin{proof}
In the previous notation, our goal is to prove estimates of the form
\begin{equation}
   \|\tilde{\mathcal{A}}_{\kappa}(f,g-\tau_h g)\|_r\lesssim |h|^{\eta}\|f\|_p\|g\|_q 
\end{equation}
in the case $d=2$.

We start by making a change of variables $y'= y+\frac{h}{\kappa(x)}$ in the integral involving the translation of $g$, from which we get
\begin{equation*}
    \begin{split}
        &\tilde{\mathcal{A}}_{\kappa}(f,g-\tau_h g)(x)\\
        =&\int_{B^{2}(0,1)} (g-\tau_h g)(x-\kappa(x)y)\int_{S^{1}} f(x-\kappa(x)\sqrt{1-|y|^2}z)d\sigma (z) dy\\
        =&\int g(x-\kappa(x)y)\Bigg\{\chi_{B^{2}(0,1)}(y)\int_{S^{1}}  f(x-\kappa(x)\sqrt{1-|y|^2}z)d\sigma(z)\\
        &-\chi_{B^2(0,1)}\left(y-\frac{h}{\kappa(x)}\right)\int_{S^{1}} f(x-\kappa(x)\sqrt{1-|y-h/{\kappa(x)}|^2}z) d\sigma (z)\Bigg\} dy\\
        =&\int g(x-\kappa(x)y)\{ \chi_{B^{2}(0,1)}(y) A_{\kappa(x)\sqrt{1-|y|^2}}(f)(x)\\
        &-\chi_{B^2(0,1)}(y-h/{\kappa(x)}) A_{\kappa(x)\sqrt{1-|y-h/\kappa(x)|^2}}(f)(x) \} dy,
    \end{split}
\end{equation*}
where $A_t(f)(x)=\int_{S^{1}} f(x-ty)d\sigma (y)$.

If $|y|>1+|h|$, then $\chi_{B^{2}(0,1)}(y)=0$ and $\chi_{B^{2}(0,1)}(y-\frac{h}{\kappa(x)})=0$ (because $\kappa(x)\in [1,2]$).

If $|y|<1-|h|$, then $\chi_{B^{2}(0,1)}(y)=1$ and $\chi_{B^{2}(0,1)}(y-\frac{h}{\kappa(x)})=1$.

Denote $\tilde{h}(x)=\frac{h}{\kappa(x)}$. Then, for a constant $C\geq 1$ to be chosen, the quantity above can be rewritten as $I_h(f,g)(x)+II_h(f,g)(x)$, where
\begin{equation*}
    \begin{split}
        &I_h(f,g)(x)\\
        :=&\int_{1-C|h|\leq|y|\leq 1+|h|} g(x-\kappa(x)y)\{\chi_{B^{2}(0,1)}(y) A_{\kappa(x)\sqrt{1-|y|^2}}(f)(x)\\
        &\quad-\chi_{B^2(0,1)}(y-\tilde{h}) A_{\kappa(x)\sqrt{1-|y-\tilde{h}|^2}}(f)(x)\} \,dy
        \end{split}
        \end{equation*}and
\begin{equation*}
    \begin{split}
&II_h(f,g)(x)\\
        :=&\int_{B^2(0,1-C|h|)} g(x-\kappa(x)y)\{ A_{\kappa(x)\sqrt{1-|y|^2}}(f)(x)- A_{\kappa(x)\sqrt{1-|y-\tilde{h}|^2}}(f)(x)\} dy.
    \end{split}
\end{equation*}

We observe that for $h$ fixed, $I_h(f,g)$ and $II_h(f,g)$ are both bilinear operators that inherit the boundedness properties of $\tilde{\mathcal{M}}$. This follows from the following inequalities:
\begin{equation*}
    \begin{split}
        &|I_h(f,g)(x)|\\
        \leq & \int_{1-C|h|\leq |y|\leq 1} |g(x-\kappa(x)y)|\int_{S^{1}} |f(x-\kappa(x)\sqrt{1-|y|^2}z)|\,d\sigma(z) dy\\
        &+ \int |g(x-\kappa(x)y)| \chi_{B^2(0,1)} (y-\tilde{h})\cdot \\
        &\qquad\qquad\int_{S^{1}} |f(x-\kappa(x)\sqrt{1-|y-\tilde{h}|^2}z)| \,d\sigma(z)dy\\
        \leq & \tilde{\mathcal{A}}_{\kappa}(|f|,|g|)(x)\\
        &\quad +\int_{B^2(0,1)} |(\tau_hg)(x-\kappa(x)y')|\int_{S^{1}} |f(x-\kappa(x)\sqrt{1-|y'|^2}z)|\,d\sigma(z)dy'\\
        \leq & \tilde{\mathcal{A}}_{\kappa}(|f|,|g|)(x)+\tilde{\mathcal{A}}_{\kappa}(|f|,|\tau_h g|)(x),
    \end{split}
\end{equation*}and similarly
\begin{equation*}
    \begin{split}
        &|II_h(f,g)(x)|\\
        \leq & \int_{B^{2}(0,1-C|h|)} |g(x-\kappa(x)y)|\int_{S^{1}} |f(x-\kappa(x)\sqrt{1-|y|^2}z)|\,d\sigma(z) dy\\
        &+ \int_{B^{2}(0,1-C|h|)} |g(x-\kappa(x)y)| \int_{S^{1}} |f(x-\kappa(x)\sqrt{1-|y-\tilde{h}|^2}z)| \,d\sigma(z)dy\\
        \leq & \tilde{\mathcal{A}}_{\kappa}(|f|,|g|)(x)\\
        &\quad +\int_{B^2(0,1)} |(\tau_hg)(x-\kappa(x)y')|\int_{S^{1}} |f(x-\kappa(x)\sqrt{1-|y'|^2}z)|\,d\sigma(z)dy'\\
        \leq & \tilde{\mathcal{A}}_{\kappa}(|f|,|g|)(x)+\tilde{\mathcal{A}}_{\kappa}(|f|,|\tau_h g|)(x).
    \end{split}
\end{equation*}

We will show that $\|I_h(f,g)\|_r\lesssim |h|^{\eta}\|f\|_p\|g\|_q$, for some triples $(1/p,1/q,1/r)$ and get the rest via interpolation with the boundedness estimates of $\tilde{\mathcal{A}}_{\kappa}$. Similarly with $II_h(f,g)$.

Since $d=2$, $\Delta(2)$ is a triangle with vertices
\begin{equation} \label{vertices2}
\mathcal{B}_1^2=(0,0),\,\mathcal{B}_2^2=\left(\frac{1}{2},\frac{1}{2}\right)=\mathcal{B}_3^2,\text{ and }\mathcal{B}_4^2=\left(\frac{2}{5},\frac{1}{5}\right).
\end{equation}

Observe that 
\begin{equation*}
    \begin{split}
        |I_h(f,g)(x)|\leq &2\int_{1-C|h|\leq |y|\leq 1+|y|} |g(x-\kappa(x)y)| \sup_{t>0} A_t(f)(x) \,dy\\
        \leq& 2\mathcal{M}_{linear}(f)(x)\sup_{t\in[1,2]}\int_{1-C|h|
        \leq  |y|\leq 1+C|h|} |g(x-t y)|\,dy.
    \end{split}
\end{equation*}

Hence, for $s$ given by the H\"older relation $1/r=1/p+1/s$, we get from Lemma \ref{lem: annulus} that 
\begin{equation*}
    \begin{split}
        \|I_h(f,g)\|_r\leq & \|\mathcal{M}_{linear}(f)\|_{p}\left\Vert\sup_{t\in[1,2]}\int_{1-C|h|
        \leq  |y|\leq 1+C|y|} |g(x-t y)|\,dy\right\Vert_{L_x^s}\\
        \lesssim & |h|\|f\|_p \|g\|_q,
        \end{split}
        \end{equation*}if $p>2$, and $\left(\frac{1}{q},\frac{1}{s}\right)\in \text{int}(\Delta(2))$. The condition $p>2$ comes from the boundedness properties of $\mathcal{M}_{\text{linear}}$ for $d=2$ \cite{Bourgain}.

In terms of $(p,q,r)$ we are asking $(\frac{1}{q},\frac{1}{r}-\frac{1}{p})\in \text{int} (\Delta(2))$, and $\frac{1}{p}<\frac{1}{2}$.

 We can prove Proposition \ref{continuity2} assuming $r$ is sufficiently large, say $r>5$. Then using the same trick at the end of the proof of Proposition 
     \ref{continuity}, namely, multilinear interpolation in the three parameters $p,q,r$ of the known continuity estimates for $r$ large, and the boundedness estimates (no power of $|h|$ gain) we can get the result for small $r$ as well.
     
Recalling the definition of $\Delta(2)$ in \eqref{vertices2}, we know that a point $(x,y)\in \textnormal{int}(\Delta(2))$ if and only if $y<x$, $y>x/2$, and $y>3x-1$.
     For $y<1/5$ the condition $y>3x-1$ does not add restriction. 

When $r>5$, one has $\frac{1}{r}-\frac{1}{p}\leq \frac{1}{5}$ so 
$$\left(\frac{1}{q},\frac{1}{r}-\frac{1}{p}\right)\in \text{int} (\Delta(2))\iff  \frac{1}{r}-\frac{1}{p}<\frac{1}{q}<2\left(\frac{1}{r}-\frac{1}{p}\right).$$

That is,
$$\frac{1}{r}<\frac{1}{p}+\frac{1}{q}\quad \text{ and }\quad \frac{1}{2q}+\frac{1}{p}<\frac{1}{r}.$$

This means that the pair $(1/p,1/q)$ is in the region between the red and the orange lines in the figure below. Note that the constraint $1/p<1/2$ is automatically satisfied for a point in this region since we are assuming $r>5$.

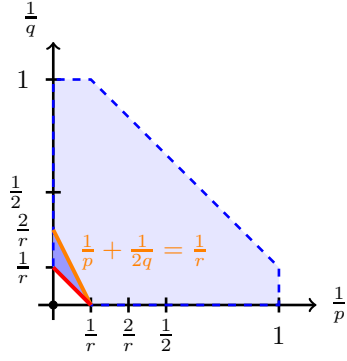
\begin{figure}[h]
\begin{center}
         \scalebox{1}{
\begin{tikzpicture}

\fill[blue!10!white] (0,0.5)--(0.5,0)--(3,0)--(3,0.5)--(0.5,3)--(0,3)--(0,0.5);

\draw (-0.3,3.8) node {$\frac{1}{q}$};
\draw (3.8,0) node {$\frac{1}{p}$};
\draw (3,-0.4) node {$1$};
\draw (-0.4,3) node {$1$};
\draw (0.5,-0.4) node {$\frac{1}{r}$};
\draw (-0.4,0.5) node {$\frac{1}{r}$};
\draw (-0.5,1.5) node{$\frac{1}{2}$};
\draw (1.5,-0.4) node{$\frac{1}{2}$};
\draw (1,-0.4) node {$\frac{2}{r}$};
\draw (-0.4,1) node {$\frac{2}{r}$};

\draw[->,line width=1pt] (-0.2,0)--(3.5,0);
\draw[->,line width=1pt] (0,-0.2)--(0,3.5);

\draw[-,line width=1pt] (3,-0.1)--(3,0.1);
\draw[-,line width=1pt] (-0.1,3)--(0.1,3);
\draw[-,line width=1pt] (-0.1,0.5)--(0.1,0.5);
\draw[-,line width=1pt] (0.5,-0.1)--(0.5,0.1);
\draw[-,line width=1pt] (-0.1,1.5)--(0.1,1.5);
\draw[-,line width=1pt] (1.5,-0.1)--(1.5,0.1);
\draw[-,line width=1pt] (1,-0.1)--(1,0.1);

\filldraw[blue!40!white] (0,0.5)--(0.5,0)--(0,1)--(0,0.5);
\draw[dashed,blue,line width=1pt] (0,0.5)--(0.5,0)--(3,0)--(3,0.5)--(0.5,3)--(0,3)--(0,0.5);

\draw[-,line width=1.5pt, orange] (0.5,0)--(0,1);
\draw[-,line width=1.5pt, red] (0,0.5)--(0.5,0);

\draw[orange] (1.2,0.7) node {$\frac{1}{p}+\frac{1}{2q}=\frac{1}{r}$};

\filldraw[black] (0,0) circle (1.5pt);

\end{tikzpicture}}
\end{center}
\caption{Figure illustrating the initial region for which we get continuity estimates for $I_{h}(f,g)$.}
\end{figure}

Interpolating with the boundedness estimates we get 
$$\|I_h(f,g)\|_r\lesssim |h|^{\eta}\|f\|_p\|g\|_q$$
for all $1/r<1/p+1/q<1+1/r=m(2,r)$.

To estimate the $L^r$ norm of $II_h(f,g)$ we start by using the Minkowski inequality for integrals:
\begin{equation}
    \begin{split}
        \|II_h(f,g)\|_{r}\leq& \int_{B^2(0,1-C|h|)}\|g(x-\kappa(x)y)(A_{\kappa(x)\sqrt{1-|y|^2}} f-A_{\kappa(x)\sqrt{1-|y-\tilde{h}|^2}}f)\|_{L^r_x} \,dy\\
        \leq &\|g\|_{\infty}\int_{B^2(0,1-C|h|)} \|A_{\kappa(x)\sqrt{1-|y|^2}} f(x)-A_{\kappa(x)\sqrt{1-|y-\tilde{h}|^2}}f(x)\|_{L^{r}_x}\, dy.
    \end{split}
\end{equation}
    
     We will now split the $y$ being considered into annuli where $(1-|y|)$ lives at scale $2^{-l}$ as in \cite{JL}:
     $$ \mathbb{A}_l=\{y\in B^2(0,1)\colon 1-2^{-l-1}\leq |y|\leq 1-2^{-l-2}\}$$
     for $l\geq 1$ and $\mathbb{A}_0=\{y\in B(0,1)\colon |y|\leq 3/4\}$.
     
      In two dimensions, each annulus has area comparable to its scale, i.e.
    $$|\mathbb{A}_l|\lesssim \int_{1-2^{-l-1}}^{1-2^{-l-2}} {\tau} \,d\tau=\frac{1}{2} \{(1-2^{-l-2})^2-(1-2^{-l-1})^2\}\lesssim 2^{-l}.$$
    
    We observe that if $l$ is too large then $\mathbb{A}_l$ does not intersect $B^2(0,1-C|h|)$. Indeed if $y\in B^2(0,1-C|h|)\cap \mathbb{A}_l$ then 
    \begin{equation}
        1-2^{-l-1}\leq |y|\leq 1-C|h|\Rightarrow 2^{-l-1}\geq C|h|\Rightarrow l+1\leq  \log_2\left(\frac{1}{C|h|}\right).
    \end{equation}
    Hence if we define $L_h=\lfloor\log_2\left(\frac{1}{C|h|}\right)\rfloor$ one has 
    \begin{equation}
        B^2(0,1-C|h|)=\bigcup_{l=0}^{L_h}\mathbb{A}_l \cap B(0,1-C|h|) 
    \end{equation}

    Denoting $B=B^2(0,1-C|h|)$, using the splitting above, we can break the integral into pieces
    \begin{equation*}
        \begin{split}
            \|II_h(f,g)\|_r\lesssim \|g\|_{\infty} \sum_{l=0}^{L_h}\int_{\mathbb{A}_l\cap B} \|A_{\kappa(x)\sqrt{1-|y|^2}}f(x)-A_{\kappa(x)\sqrt{1-|y-\tilde{h}|^2}}f(x)\|_{L^r_x} \,dy.
        \end{split}
    \end{equation*}
    
    Observe that since $l\leq L_h$, if $C > 8$ then one has $|h|\leq \frac{2^{-l}}{C} <2^{-l-3}$, so for all $y\in \mathbb{A}_l$ one has  that $\sqrt{1-|y|^2}$ and $\sqrt{1-|y-\tilde{h}|^2}$ are both comparable to $2^{-l/2}$ .

    Indeed, 
    $$\sqrt{1-|y|^2}=\sqrt{1+|y|}\sqrt{1-|y|}\sim 2^{-l/2},$$
    
    and 
   $$ 1-2^{-l}\leq |y-\tilde{h}|\leq 1-2^{-l-3}\implies \sqrt{1-|y-\tilde{h}|^2}\sim \sqrt{1-|y-\tilde{h}|} \sim 2^{-l/2}.$$

    By Taylor expansion, there is some universal constant $C_2$ such that
    $$\left|\dfrac{1}{2^{-l/2}}\sqrt{1-|y|^2}-\dfrac{1}{2^{-l/2}}\sqrt{1-|y-\tilde{h}|^2}\right|\leq C_22^{l/2}2^{l/2}|\tilde{h}|\leq C_22^{l}|h|=:\gamma<\frac{1}{2},$$
with the last inequality following as long as we choose $C> 2C_2$. Thus, we can apply Lemma \ref{lemma2d} to get that if $(1/p,1/r)$ is in the interior of $\Delta(2)$ then $\|II_h(f,g)\|_{r}$ is dominated by
    \begin{equation*}
        \begin{split}
        \lesssim &\|g\|_{\infty} \sum_{l=0}^{L_h} (|h|2^l)^{\eta} (2^{-l/2})^{2(\frac{1}{r}-\frac{1}{p})}|\mathbb{A}_l| \|f\|_p \\
             \lesssim &\|g\|_{\infty}\|f\|_p \sum_{l=0}^{L_h} |h|^{\eta} 2^{-l(\frac{1}{r}-\frac{1}{p}-\eta)}|\mathbb{A}_l| \\
             \lesssim &\|g\|_{\infty}\|f\|_p |h|^{\eta}\sum_{l=0}^{L_h}  2^{-l(1+\frac{1}{r}-\frac{1}{p}-\eta)}\\
             \lesssim & |h|^{\eta}\|f\|_p\|g\|_{\infty}
        \end{split}
    \end{equation*}
    since we have that $1+\dfrac{1}{r}-\dfrac{1}{p}\geq \dfrac{1}{r}>0$.
    
    In conclusion, we have that 
    $$\|II_h(f,g)\|_r\lesssim |h|^{\eta}\|f\|_p\|g\|_{\infty}$$
     for exponents satisfying $(1/p,1/r)\in \text{int}(\Delta(2))$.

     Note that for $r>5$, one has $(1/p,1/r)\in \text{int}(\Delta(2))$ if and only if $\frac{1}{r}<\frac{1}{p}<\frac{2}{r}$.
     
     \begin{figure}[h]
\begin{center}
         \scalebox{1}{
\begin{tikzpicture}

\fill[blue!10!white] (0,0.5)--(0.5,0)--(3,0)--(3,0.5)--(0.5,3)--(0,3)--(0,0.5);

\draw (-0.3,3.8) node {$\frac{1}{q}$};
\draw (3.8,0) node {$\frac{1}{p}$};
\draw (3,-0.4) node {$1$};
\draw (-0.4,3) node {$1$};
\draw (0.5,-0.4) node {$\frac{1}{r}$};
\draw (-0.4,0.5) node {$1/r$};
\draw (-0.5,1.5) node{$\frac{1}{2}$};
\draw (1.5,-0.4) node{$\frac{1}{2}$};
\draw (1,-0.4) node {$\frac{2}{r}$};
\draw (-0.4,1) node {$2/r$};

\draw[->,line width=1pt] (-0.2,0)--(3.5,0);
\draw[->,line width=1pt] (0,-0.2)--(0,3.5);

\draw[-,line width=1pt] (3,-0.1)--(3,0.1);
\draw[-,line width=1pt] (-0.1,3)--(0.1,3);
\draw[-,line width=1pt] (-0.1,0.5)--(0.1,0.5);
\draw[-,line width=1pt] (0.5,-0.1)--(0.5,0.1);
\draw[-,line width=1pt] (-0.1,1.5)--(0.1,1.5);
\draw[-,line width=1pt] (1.5,-0.1)--(1.5,0.1);
\draw[-,line width=1pt] (1,-0.1)--(1,0.1);

\draw[dashed,blue,line width=1pt] (0,0.5)--(0.5,0)--(3,0)--(3,0.5)--(0.5,3)--(0,3)--(0,0.5);
\draw[-,line width=1.5pt, orange] (0.5,0)--(1,0);
\filldraw[black] (0,0) circle (1.5pt);
\end{tikzpicture}}
\end{center} 
\caption{Figure illustrating the initial region for which we get continuity estimates for $II_{h}(f,g)$, namely the open segment in orange.}
\end{figure}
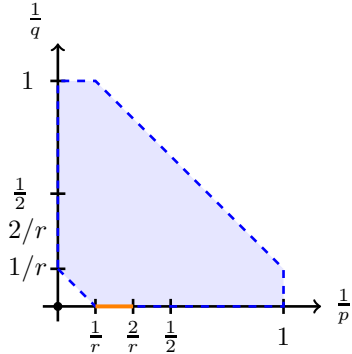
 Interpolating with the boundedness estimates for $II_h(f,g)$ we get the continuity estimates in the interior of the boundedness region of $\tilde{\mathcal{M}}$. Then
$$\|\tilde{\mathcal{A}_{\kappa}}(f,g)\|_r\leq \|I_h(f,g)\|_r+\|II_h(f,g)\|_r\lesssim |h|^{\eta}\|f\|_p\|g\|_q $$
for some $\eta(p,q,r)>0$ for any point in $1/r<1/p+1/q<1+1/r$.  
 \end{proof}

We now move onto the continuity estimate that involves translation in both input functions $f$ and $g$.
\begin{prop}\label{prop: double}
Let $d\geq 2$, and $(\frac{1}{p},\frac{1}{q},\frac{1}{r}) \in\mathcal{R}(d)$. Then there exists $\eta_1,\eta_2$ such that 
$$\|\tilde{\mathcal{M}}(f-\tau_{h_1}f,g-\tau_{h_2}g)\|_{L^r}\lesssim |h_1|^{\eta_1}|h_2|^{\eta_2}\|f\|_{L^p}\|g\|_{L^q}$$
for all $|h_1|<1,\,|h_2|<1$.
\end{prop}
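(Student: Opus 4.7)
The plan is to derive Proposition~\ref{prop: double} from Proposition~\ref{prop: cont} by exploiting the symmetry of $\mathcal{A}_t$ in its two inputs and then combining the two resulting one-sided bounds via a geometric mean. The key observation is that the normalized surface measure on $S^{2d-1}\subset \R^{2d}$ is invariant under the coordinate swap $(y,z)\mapsto(z,y)$, so $\mathcal{A}_t(f,g)=\mathcal{A}_t(g,f)$ and therefore $\tilde{\mathcal{M}}(f,g)=\tilde{\mathcal{M}}(g,f)$. Moreover, the region $\mathcal{R}(d)$ is symmetric under the exchange $1/p\leftrightarrow 1/q$, so $(1/q,1/p,1/r)\in\mathcal{R}(d)$ as well.

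First I would apply Proposition~\ref{prop: cont} with $s=1$, taking the first argument to be $f-\tau_{h_1}f\in L^p$ (whose norm is at most $2\|f\|_{L^p}$) and using the translation parameter $h_2$ in the second slot. Since $|h_2|<1$, this immediately yields
\[
\|\tilde{\mathcal{M}}(f-\tau_{h_1}f,\,g-\tau_{h_2}g)\|_{L^r}\;\lesssim\;|h_2|^{\eta}\|f\|_{L^p}\|g\|_{L^q}
\]
for some $\eta=\eta(d,p,q,r)>0$ supplied by the proposition. Then, using the symmetry noted above, I would rewrite the left-hand side as $\|\tilde{\mathcal{M}}(g-\tau_{h_2}g,\,f-\tau_{h_1}f)\|_{L^r}$ and apply Proposition~\ref{prop: cont} a second time, now with exponents $(q,p,r)\in\mathcal{R}(d)$, first argument $g-\tau_{h_2}g\in L^q$, and translation parameter $h_1$, obtaining the companion bound
\[
\|\tilde{\mathcal{M}}(f-\tau_{h_1}f,\,g-\tau_{h_2}g)\|_{L^r}\;\lesssim\;|h_1|^{\eta}\|f\|_{L^p}\|g\|_{L^q}.
\]

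Finally, since both inequalities hold simultaneously, the left-hand side is dominated by $\min(|h_1|^{\eta},|h_2|^{\eta})\|f\|_{L^p}\|g\|_{L^q}$, which is in turn at most $|h_1|^{\eta/2}|h_2|^{\eta/2}\|f\|_{L^p}\|g\|_{L^q}$ by the AM--GM inequality. This gives the desired estimate with $\eta_1=\eta_2=\eta/2$. I do not anticipate any real obstacle in this argument: all of the analytic work is already encapsulated in Proposition~\ref{prop: cont}, and the double-translation version follows by a simple symmetrization together with a one-line interpolation.
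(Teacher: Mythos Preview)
Your argument is correct and follows the same overall strategy as the paper: derive the double-translation bound from two one-sided single-translation bounds and then combine them. The execution differs slightly. The paper linearizes to $\tilde{\mathcal{A}}_\kappa$, obtains the two one-sided bounds at auxiliary exponent tuples $(p_1,q_1,r)$ and $(p_2,q_2,r)$, and then applies bilinear interpolation to reach $(p,q,r)$ with a product $|h_1|^{\theta\eta_1}|h_2|^{(1-\theta)\eta_2}$. You work directly with $\tilde{\mathcal{M}}$, use the symmetry $\mathcal{A}_t(f,g)=\mathcal{A}_t(g,f)$ to obtain both one-sided bounds at the \emph{same} tuple $(p,q,r)$, and combine via $\min(a,b)\le\sqrt{ab}$. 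Your route is marginally cleaner: it avoids linearization and interpolation machinery altogether, since the paper's interpolation step with $(p_1,q_1)=(p_2,q_2)=(p,q)$ collapses exactly to your geometric mean.

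Two minor points of hygiene. First, the exponent coming from the second application is a priori $\eta(d,q,p,r)$, not $\eta(d,p,q,r)$; replacing $\eta$ by the minimum of the two fixes this harmlessly. Second, the inequality you invoke is $\min(a,b)\le\sqrt{ab}$, not AM--GM (which goes the other way); the bound is of course correct regardless of the name.
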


\begin{proof}
Again, it suffices to prove this bound for the linearized operator $\tilde{\mathcal{A}_{\kappa}}$, where $\kappa:\R^d\rightarrow[1,2]$ is measurable.

For $(1/p,1/q,1/r)\in \mathcal{R}(d)$, take $(1/p_i,1/q_i,1/r)\in \mathcal{R}(d)$, $i\in \{1,2\}$, such that
$$(1/p,1/q)=\theta (1/p_1,1/q_1) +(1-\theta)(1/p_2,1/q_2).$$

From what we proved in Propositions \ref{continuity} and \ref{continuity2}, we know that 
\begin{equation*}
    \begin{split}
        \|\tilde{\mathcal{A}_{\kappa}}(f-\tau_{h_1}f,g-\tau_{h_2}g)\|_r&\leq \|\tilde{\mathcal{A}_{\kappa}}(f-\tau_{h_1}f,g)\|_r+\|\tilde{\mathcal{A}_{\kappa}}(f-\tau_{h_1}f,\tau_{h_2}g)\|_r\\
        &\lesssim |h_1|^{\eta_1}\|f\|_{p_1}\|g\|_{q_1}.
    \end{split}
\end{equation*}

 Similarly, using the linearity in the first entry, 
$$\|\tilde{\mathcal{A}_{\kappa}}(f-\tau_{h_1}f,g-\tau_{h_2}g)\|_r
\lesssim|h_2|^{\eta_2}\|f\|_{p_2}\|g\|_{q_2}.$$

Interpolating these estimates we get 
$$\|\tilde{\mathcal{A}_{\kappa}}(f-\tau_{h_1}f,g-\tau_{h_2}g)\|_r
\lesssim |h_1|^{\theta \eta_1}|h_2|^{(1-\theta)\eta_2}\|f\|_{p}\|g\|_{q}.$$

\end{proof}

\section{A digression to the $d=1$ case}\label{sec: d1}

In this section, we study the continuity estimate of the single scale bilinear averaging operator in dimension $d=1$. A completely different method is needed in this case, as the operator is much more singular and the slicing technique stops applying. More precisely, we shall get the continuity estimate as a consequence of a new trilinear smoothing theorem recently developed in \cite{MCZZ}. This theorem extends the earlier trilinear smoothing inequality from \cite{MC} to include a class of degenerate surfaces. In particular, the surface parametrized by $\phi_1 = x, \phi_2 = x-\cos t, \phi_3 = x-\sin t$, which is the surface in our problem, falls into this class.

Let $d=1$, in which case it suffices to study the single scale bilinear spherical averaging operator
$$\mathcal{A}_1(f,g)(x)=\int_{S^{1}}f(x-y)g(x-z)d\sigma(y,z)=\int_{0}^{2\pi} f(x-\cos t)g(x-\sin t)dt.$$

Denote by $\mathcal{R}_1$ the boundedness region of $\mathcal A_1$, that is, 
$$\mathcal{R}_1=\{(1/p,1/q,1/r)\in[0,1]^3\colon \mathcal{A}_1:L^{p}\times L^{q}\rightarrow L^{r} \text{ is bounded} \}.$$

The region $\mathcal{R}_1$ was first investigated by Oberlin in \cite{DOberlin} in the case $p=q$. The results were further improved in \cite{BS} and then very recently in \cite{SS}, where some necessary and sufficient conditions for the $L^p$ improving estimates for $\mathcal A_1$ were obtained. However, these conditions do not yet match and one still only has a partial description of $\mathcal{R}_1$. As these conditions are fairly complicated to state, we omit them here and refer the reader to \cite[Theorem 2.1 and 2.3]{SS}.

Our main theorem of the section is the following continuity estimate.

\begin{thm}
\label{d1continuity}
Let $d = 1$, then for any $(1/p, 1/q, 1/r)$ in the interior of $\mathcal R_1$ there exists $\eta = \eta(p, q, r)>0$, such that 
\[
\|\mathcal A_1(f_1-\tau_h f_1, f_2)\|_{r}\lesssim |h|^\eta \|f_1\|_{p}\|f_2\|_{q}\]for all $|h|<1$.
\end{thm}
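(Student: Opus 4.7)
The plan is to derive the $|h|^\eta$ gain from the trilinear smoothing inequality of \cite{MCZZ} combined with a Littlewood--Paley decomposition of $f_1 - \tau_h f_1$ in frequency. By duality, the claimed bound is equivalent to a decay statement for the trilinear form
\[
\Lambda(g_1, g_2, g_3) := \int_{\mathbb{R}} \int_0^{2\pi} g_1(x-\cos t)\, g_2(x-\sin t)\, g_3(x)\, dt\, dx
\]
tested against $g_1 = f_1 - \tau_h f_1$. Let $\{P_k\}_{k \in \mathbb{Z}}$ be a standard Littlewood--Paley decomposition on $\mathbb{R}$. Since $I - \tau_h$ has Fourier symbol $1 - e^{-2\pi i h \xi}$ bounded pointwise by $\min(1, |h| |\xi|)$, one has
\[
\|P_k(f_1 - \tau_h f_1)\|_{L^p} \lesssim \min(1, |h| 2^k)\, \|P_k f_1\|_{L^p},
\]
so splitting at the threshold $2^{k_0} \sim |h|^{-1}$ is natural.

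For the low-frequency part $k \le k_0$, the bound $\min(1, |h| 2^k) = |h| 2^k$, combined with the known $L^p$-improving estimate of $\mathcal{A}_1$ on the interior of $\mathcal{R}_1$ and Littlewood--Paley square-function bounds, produces a geometric sum dominated by $|h| 2^{k_0} \sim 1$; interpolating this bound with the plain $L^p$-improving bound at a slightly different point of $\text{int}(\mathcal{R}_1)$ converts it into a genuine $|h|^{\theta_1}$ gain for some $\theta_1 > 0$. For the high-frequency part $k > k_0$, I would apply the trilinear smoothing inequality of \cite{MCZZ}; the authors explicitly note that the surface $\phi_1 = x$, $\phi_2 = x - \cos t$, $\phi_3 = x - \sin t$ falls into its range of applicability. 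After also Littlewood--Paley decomposing $f_2$ and $f_3$, this inequality gives a per-block decay $2^{-\delta \max(k, k_2, k_3)}$ for some $\delta > 0$ at a suitable triple of Lebesgue exponents; summing the diagonal contributions over $k > k_0$ yields a factor $2^{-\delta k_0} \sim |h|^\delta$, while off-diagonal contributions are controlled by the standard high-low-frequency trick together with the $L^p$-improving bound on $\mathcal{A}_1$.

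Combining the two regimes gives the continuity estimate at one specific triple $(1/p_0, 1/q_0, 1/r_0) \in \text{int}(\mathcal{R}_1)$ with an explicit exponent $\eta > 0$. Multilinear interpolation between this ``good'' estimate and the known plain $L^p$-improving bounds on $\text{int}(\mathcal{R}_1)$ (which carry no $|h|^\eta$ gain) then extends the continuity estimate to every triple in the interior of $\mathcal{R}_1$, as at the end of the proof of Proposition \ref{continuity}. The main obstacle is extracting the trilinear smoothing inequality of \cite{MCZZ} in the precise frequency-localized form needed and verifying that it applies at a triple of exponents that lies in (or can be transferred to) $\text{int}(\mathcal{R}_1)$; once that single good estimate is in hand, the frequency-sum bookkeeping and the final interpolation step are essentially routine.
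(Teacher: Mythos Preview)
Your overall architecture---Littlewood--Paley split of $f_1-\tau_h f_1$, trilinear smoothing from \cite{MCZZ} for the high frequencies, then multilinear interpolation from one good triple to all of $\operatorname{int}(\mathcal R_1)$---is exactly the paper's strategy. The paper works at the specific triple $(p,q,r)=(2,2,1)$, decomposes only $f_1$ (not $f_2$ or the dual function), and packages the high-frequency input as a single estimate $\|\mathcal A_1(Q_k f_1,f_2)\|_{L^1}\lesssim 2^{-\delta k}\|f_1\|_2\|f_2\|_2$ (their Proposition~\ref{hsproposition}). This avoids the triple frequency decomposition and the ``off-diagonal'' bookkeeping you sketch, and uses the trilinear smoothing only in the form $|\mathcal T(\mathbf f)|\lesssim\prod\|f_j\|_{H^{-\delta}}$ at $L^2$.

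The genuine gap in your proposal is the low-frequency step. You set the cut-off at the natural threshold $2^{k_0}\sim|h|^{-1}$, obtain $\sum_{k\le k_0}|h|2^k\sim 1$, and then assert that ``interpolating this bound with the plain $L^p$-improving bound at a slightly different point'' yields $|h|^{\theta_1}$. That does not work: interpolating an $O(1)$ bound against another $O(1)$ bound gives $O(1)$, not $|h|^{\theta_1}$. The $|h|$ gain must come from \emph{balancing} the low- and high-frequency contributions, not from interpolation. Concretely, the low-frequency piece is $\lesssim |h|2^{ck_0}$ for some $c>0$ and the high-frequency piece is $\lesssim 2^{-\delta k_0}$; choosing $2^{k_0}=|h|^{-1/(c+\delta)}$ (which is \emph{strictly smaller} than $|h|^{-1}$) makes both $\sim|h|^{\delta/(c+\delta)}$. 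This is precisely what the paper does: with $c=2$ it takes $2^k=|h|^{-1/(2+\delta)}$ and obtains the exponent $\eta=\delta/(2+\delta)$ at $(2,2,1)$. Once you fix this balancing step your outline is correct, and the final interpolation to $\operatorname{int}(\mathcal R_1)$ proceeds exactly as you indicate.
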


By a standard argument, this theorem implies the following sparse domination result for the lacunary version of the bilinear spherical maximal function
\[
\mathcal{M}_{lac}(f,g)(x)=\sup_{m\in\BZ}\mathcal{A}_{2^m}(|f|,|g|)(x).
\]

\begin{thm}\label{thm: d1}
Let $d = 1$. For all exponents $(1/p,1/q,1/r)$ in the interior of the region $\mathcal{R}_1$ such that $r>1$, the bilinear lacunary circular maximal function $\mathcal{M}_{lac}$ has $(p,q,r')$ sparse bound, where $r'$ is given by $\frac{1}{r}+\frac{1}{r'}=1$. More precisely, for all functions $f,g,h \in C_0^\infty$, there exists a sparse collection $\mathcal{S}$ of intervals in $\mathbb{R}$ such that
\[
|\langle \mathcal{M}_{lac}(f,g),h\rangle| \lesssim \sum_{Q\in \mathcal{S}} |Q| \langle f\rangle_{Q,p}\langle g\rangle_{Q,q}\langle h\rangle_{Q,r'}.
\]
\end{thm}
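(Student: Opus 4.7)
The plan is to deduce Theorem \ref{thm: d1} from Theorem \ref{d1continuity} by invoking the sparse domination framework of Lacey \cite{Lacey} as adapted to the bilinear Radon setting in \cite{Roncal, triangle}. This is precisely the same scheme that will be executed in Sections \ref{sec: reduction} and \ref{sec: sparse lemma} for the higher-dimensional full maximal function; the only ingredient that changes is the single-scale continuity input, and for the lacunary operator $\mathcal{M}_{lac}$ the framework is strictly easier to apply because no supremum over a continuum of radii ever needs to be handled.

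First I would rescale Theorem \ref{d1continuity}: a standard change of variables converts the $L^r$ bound for $\mathcal{A}_1$ into, for every $s>0$ and $|h|<s$,
\[
\|\mathcal{A}_s(f_1-\tau_h f_1, f_2)\|_{L^r} \lesssim s^{1/r-1/p-1/q}\, (|h|/s)^{\eta}\, \|f_1\|_{L^p}\|f_2\|_{L^q},
\]
and the symmetric bound with translation in the second entry follows from the reflection $t\mapsto \pi/2-t$ on $S^1$, which exchanges $\cos t$ and $\sin t$ and thus the roles of $f_1$ and $f_2$. Because the lacunary set $\{2^m\}$ contains at most one element of any window $[s,2s]$, this display is automatically the "supremum over the scale window" version demanded by the framework. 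Setting $h=0$ recovers the single-scale $L^p$-improving estimate, which is the other required hypothesis.

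With both inputs in hand I would apply the abstract sparse lemma of Section \ref{sec: sparse lemma} verbatim. The argument is a Calder\'on--Zygmund-type stopping time: given a cube $Q_0$ containing the supports of $f,g,h$, select maximal subcubes $Q\subset Q_0$ on which at least one of $\langle|f|\rangle_{Q,p}$, $\langle|g|\rangle_{Q,q}$, $\langle|h|\rangle_{Q,r'}$ exceeds a large multiple of the corresponding $Q_0$-average, and iterate. On the complement of the stopping cubes the averages are controlled by definition; the resulting contribution is estimated by the single-scale $L^p$-improving bound for the large scales and by a telescoping argument across dyadic scales for the small ones, the $(|h|/s)^{\eta}$ factor from the continuity estimate furnishing the geometric decay that makes the sum converge. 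Sparseness of the stopping family follows from the standard Carleson packing estimate.

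The main obstacle is therefore concentrated entirely in Theorem \ref{d1continuity}; the reduction to the sparse bound is essentially formal once the continuity input is available. Two features make the $d=1$ lacunary setting cleaner than the $d\geq 2$ full setting: only dyadic scales $2^m$ enter the supremum, removing the need for any continuous-$t$ uniformity, and the "Bad-Good" analysis of Section \ref{sec: sparse lemma}, which exploited the shape of $\mathcal{R}(d)$ to drop the constraint $p,q\leq r$, carries over verbatim with $\mathcal{R}(d)$ replaced by (the interior of) the convex region $\mathcal{R}_1$. All of the genuinely $d=1$-specific delicacy --- the failure of slicing, the reliance on the trilinear smoothing inequality of \cite{MCZZ} --- is already absorbed into the proof of Theorem \ref{d1continuity}, and none of it resurfaces at the sparse-domination stage.
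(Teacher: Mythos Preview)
Your approach matches the paper's: deduce the sparse bound from the single-scale continuity estimate via the framework of Sections \ref{sec: reduction}--\ref{sec: sparse lemma}, simplified because the lacunary supremum involves only one radius per dyadic window. Two small points you gloss over that the paper makes explicit: first, the BB term in Section \ref{sec: sparse lemma} requires a \emph{double}-translation continuity estimate (the analogue of Proposition \ref{prop: double}), not just the single-translation bound you state; this follows from Theorem \ref{d1continuity} by the same interpolation as in the proof of Proposition \ref{prop: double}. Second, saying the Bad-Good argument ``carries over verbatim'' by convexity of $\mathcal{R}_1$ is not quite enough: the argument needs that for every $1<p<\infty$ there exists $q^\ast$ with $(1/p,1/q^\ast,1/p)\in\mathrm{int}(\mathcal{R}_1)$ (and symmetrically), and since $\mathcal{R}_1$ is not fully known this has to be verified from the partial description in \cite{SS}, which shows $\mathcal{R}_1$ contains the convex hull of $(0,0,0),(0,1,1),(1,0,1),(\tfrac{3}{5},\tfrac{3}{5},\tfrac{2}{5})$.
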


Theorem \ref{thm: d1} follows from Theorem \ref{d1continuity} and an almost identical (and even simpler) deduction as in Section \ref{sec: reduction} and Section \ref{sec: sparse lemma}, where the full version of the maximal function $\mathcal{M}$ in dimensions $d\geq 2$ is discussed. One needs to slightly modify those arguments because now we are in the lacunary (rather than the full) case. Such modification (for instance, see \cite{Roncal, triangle}) is routine and we omit the details. Note that an analogue of Proposition \ref{prop: double} involving translations in both components is needed as well, which is an obvious corollary of Theorem \ref{d1continuity} following the same proof of Proposition \ref{prop: double}.

There is, however, a subtlety in the proof of Theorem \ref{thm: d1} that we would like to point out. Unlike previous works, where certain versions of the assumption $p,q\leq r$ were needed in the sparse domination theorem, we are able to remove this assumption in our theorem. As detailed in the ``Bad-Good'' case in Section \ref{sec: sparse lemma} below, such assumptions are not necessary once the boundedness region for $(1/p, 1/q, 1/r)$ satisfies the following property: for all $1<p<\infty$, there exists some $q^\ast$ such that $(1/p, 1/q^\ast, 1/p)$ is in the interior of the region; symmetrically, for all $1<q<\infty$, there exists some $p^\ast$ such that $(1/p^\ast, 1/q, 1/q)$ is in the interior of the region. Even though the full description of the boundedness region $\mathcal{R}_1$ here is not yet available, it is proved in \cite[Theorem 2.3]{SS} that the region contains the convex hull of the points $(0,0,0), (0,1,1), (1,0,1), (\frac{3}{5}, \frac{3}{5}, \frac{2}{5})$, hence it has this property.

It is unclear to us whether Theorem \ref{d1continuity} can be extended to the localized bilinear circular maximal function $\sup_{1\leq t\leq 2}\mathcal{A}_t$, which, if true, would yield sparse bounds for the full version of the bilinear circular maximal function $\mathcal{M}$. 

We now move on to the proof of Theorem \ref{d1continuity}. It will be implied by the following proposition, which is in turn a consequence of the trilinear smoothing inequality (Theorem \ref{trismooth} below). We leave the proof of the proposition to the end of this section.

Let $\{Q_k\}$ be the Littlewood-Paley projections. More precisely, for any $k\in \mathbb{Z}$ define $Q_k f :=\hat \phi_k*f$ where $\phi_k = \psi_k-\psi_{k-1}$ and $\psi_k(x) = \psi(2^{-k}x)$. Here, $\psi$ is a smooth bump function supported on $[-2, 2]$, taking values in $[0, 1]$ and satisfying $\psi(\xi) \equiv 1, \forall |\xi|<1$. Let $P_k =\sum_{i\leq k} Q_i$ and $\tilde Q_i = Q_{i-1}+Q_i+Q_{i+1}$.
\begin{prop}
\label{hsproposition}
Let $d=1$. There exists $\delta>0$ such that for all functions $f_1, f_2\in L^2$ and all $k\in \mathbb{Z}$,
\[
\|\mathcal A_1(Q_kf_1, f_2)\|_{1}\lesssim 2^{-\delta k}\|f_1\|_{2}\|f_2\|_{2}.
\]
\end{prop}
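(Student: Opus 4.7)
The plan is to dualize the $L^1$ norm and apply the trilinear smoothing inequality of \cite{MCZZ} referenced in the paragraph preceding the proposition (and to be stated later as Theorem \ref{trismooth}). Writing
\[
\|\mathcal{A}_1(Q_kf_1, f_2)\|_{L^1}=\sup_{\|f_3\|_{L^\infty}\le 1}\left|\int_{\mathbb{R}}\int_0^{2\pi}(Q_kf_1)(x-\cos t)\,f_2(x-\sin t)\,f_3(x)\,dt\,dx\right|,
\]
the problem reduces to bounding the trilinear form associated to the phases $\phi_1(x,t)=x-\cos t$, $\phi_2(x,t)=x-\sin t$, $\phi_3(x,t)=x$. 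The excerpt already records that this surface falls in the class of (possibly degenerate) surfaces covered by \cite{MCZZ}, so the trilinear smoothing theorem applies.

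I would then invoke Theorem \ref{trismooth} to obtain a Sobolev-smoothing bound of the form
\[
|\Lambda(g_1,g_2,g_3)|\lesssim \|g_1\|_{H^{-\epsilon}}\|g_2\|_{L^2}\|g_3\|_{L^\infty}
\]
for some $\epsilon=\epsilon(\phi_1,\phi_2,\phi_3)>0$. Specialising to $g_1=Q_kf_1$ and using the standard Littlewood--Paley estimate $\|Q_kf_1\|_{H^{-\epsilon}}\lesssim 2^{-\epsilon k}\|Q_kf_1\|_{L^2}\leq 2^{-\epsilon k}\|f_1\|_{L^2}$ then gives the proposition with $\delta=\epsilon$.

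The main obstacle is to match Theorem \ref{trismooth} to the function spaces above. Trilinear smoothing inequalities of this kind are typically stated with all three arguments in Sobolev $L^2$ spaces, whereas here the dual function $f_3$ appears only in $L^\infty$. I would bridge this gap by first extracting the $L^2$-based smoothing bound $\|\mathcal{A}_1(Q_kf_1, f_2)\|_{L^2}\lesssim 2^{-\epsilon k}\|f_1\|_{L^2}\|f_2\|_{L^2}$ from Theorem \ref{trismooth} (via Plancherel, using that the circle's surface measure has Fourier decay $|\hat{\sigma}(\xi)|\lesssim (1+|\xi|)^{-1/2}$) and then converting this gain to an $L^1$ gain via the near spatial locality of $\mathcal{A}_1$: since $\mathcal{A}_1(Q_kf_1, f_2)(x)$ depends essentially only on the values of the inputs in $B(x,2)$ up to Schwartz tails of $Q_k$, one can sum Cauchy--Schwarz over a partition of $\mathbb{R}$ into unit intervals and interpolate against the trivial $L^2\times L^2\to L^1$ bound $\|\mathcal{A}_1(f_1,f_2)\|_{L^1}\lesssim \|f_1\|_{L^2}\|f_2\|_{L^2}$ (which follows from Cauchy--Schwarz in $t$ and then in $x$) to recover a positive power $\delta>0$ of $2^{-k}$ in $L^1$.
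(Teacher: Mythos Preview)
Your overall strategy---dualize and feed the resulting trilinear form into the trilinear smoothing inequality of \cite{MCZZ}---is exactly the paper's. The gap is in the order of operations. Theorem~\ref{trismooth} is stated with a \emph{compactly supported} cutoff $\eta(x,t)$, so it cannot be applied to the global form $\int_{\mathbb R}\int_0^{2\pi}(\cdots)\,dt\,dx$ directly; your proposed intermediate bound $\|\mathcal A_1(Q_kf_1,f_2)\|_{L^2}\lesssim 2^{-\epsilon k}\|f_1\|_{L^2}\|f_2\|_{L^2}$ is therefore not something the theorem hands you, and the parenthetical appeal to Fourier decay of $\hat\sigma$ does not obviously yield $L^2\times L^2\to L^2$ smoothing for this bilinear multiplier either. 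The paper instead localizes \emph{first}: it introduces a unit-scale partition of unity $\{\eta_n\}$ in $x$, writes $\|\mathcal A_1(Q_kf_1,f_2)\|_{L^1}\le\sum_n\sup_{|g_n|\le 1,\ \mathrm{supp}\,g_n\subset I_n}|\mathcal T_n(Q_kf_1,f_2,g_n)|$, and then applies Theorem~\ref{trismooth} to each $\mathcal T_n$. On each piece the $L^\infty$ obstruction you worried about disappears for free, since $g_n$ is bounded and supported on an interval of length $\sim 1$, hence $\|g_n\|_{H^{-\delta}}\le\|g_n\|_{L^2}\lesssim 1$. The sum over $n$ is then closed by Cauchy--Schwarz and the bounded overlap of unit-scale cutoffs $\tilde\chi_n$ applied to $f_1,f_2$.

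A second point your sketch omits is the commutator between $Q_k$ and the spatial cutoffs. After localizing, the input appearing in $\mathcal T_n$ is $(Q_kf_1)\tilde\chi_n$, not $Q_k(f_1\tilde\chi_n)$; only the latter gives $\|Q_k(f_1\tilde\chi_n)\|_{H^{-\delta}}\lesssim 2^{-\delta k}\|f_1\tilde\chi_n\|_{L^2}$ directly. The paper splits off the commutator error $(Q_kf_1)\tilde\chi_n-Q_k(f_1\tilde\chi_n)$ and bounds it pointwise by $2^{-k}|f_1|\ast m_k$ using the rapid decay of the Littlewood--Paley kernel together with the fact that $\tilde\chi_n(x)-\tilde\chi_n(u)\equiv 0$ for $|x-u|<1$ on the relevant range. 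This term then goes through the trivial $L^2\times L^2\to L^1$ bound for $\mathcal A_1$. Your proposal, which defers localization until after the smoothing step, never confronts this issue.
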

\begin{proof}[Proof of Theorem \ref{d1continuity} assuming Proposition \ref{hsproposition}] We first show the continuity estimate at the tuple $(p,q,r)=(2,2,1)$. It is well known that $\mathcal{A}_1$ maps boundedly from $L^2\times L^2$ into $L^1$ (see e.g. \cite{DOberlin, SS, MCZZ}).

Fix a parameter $k\in \Z$, which shall be chosen later, and decompose $f_1$ to get 
\begin{align*}
\|\mathcal A_1(f_1 -\tau_h f_1, f_2)\|_1\lesssim &\|\mathcal A_1(P_kf_1 - \tau_h P_kf_1, f_2)\|_1\\
&+\sum_{i> k}\|\mathcal{A}_1(Q_if_1, f_2)\|_1+\sum_{i> k}\|\mathcal{A}_1(\tau_h Q_if_1, f_2)\|_1\\
&=I+II+III.
\end{align*}
The first part is bounded by $O(2^{2k}|h|\|f_1\|_2\|f_2\|_2)$ by using the $L^2\times L^2\rightarrow L^1$ boundedness of $\mathcal A_1$ and estimating $\|P_kf_1 - \tau_h P_kf_1\|_2$ via $\|\hat \psi^\prime\|_1$ and Minkowski's integral inequality. Indeed,
\begin{align*}
I& \lesssim \|P_kf_1 - \tau_h P_kf_1\|_2 \|f_2\|_2\\
&= \left\|\int_0^h f_1*\hat \psi_k^\prime(x+t)\,dt\right\|_{L^2_x} \|f_2\|_2\\
&\leq \left(\int_0^h\| f_1*\hat \psi_k^\prime(x+t)\|_{L^2_x}\,dt\right) \|f_2\|_2 \\
&\leq |h|\|f_1\|_2\|f_2\|_2\|\hat \psi_k^\prime\|_1\lesssim 2^{2k}|h||f_1\|_2\|f_2\|_2.
\end{align*}

By Proposition \ref{hsproposition}, 
\[
II = \sum_{i> k}\|\mathcal{A}_1(Q_i f_1, f_2)\|_1\leq \sum_{i> k}2^{-\delta i}\|f_1\|_2 \|f_2\|_2\lesssim_\delta 2^{-\delta k}\prod_{i = 1}^2 \|f_i\|_2.
\]Similarly, one can obtain the same estimate for the last term:
\[
III=\sum_{i> k}\|\mathcal{A}_1(\tau_h Q_if_1, f_2)\|_1=\sum_{i> k}\|\mathcal{A}_1(Q_if_1, \tau_{-h}f_2)(\cdot -h)\|_1\lesssim_\delta 2^{-\delta k}\prod_{i=1}^2\|f_i\|_2.
\]

Since we are only interested in the small $h$ case, we are allowed to choose $k$ such that $2^k = |h|^{-1/(2+\delta)}$, and we get 
$$\|\mathcal A_1(f_1 -\tau_h f_1, f_2)\|_1\leq C(\delta)|h|^{\frac{\delta}{2+\delta}}\prod_{i = 1}^2 \|f_i\|_2.$$
Interpolating this continuity estimate for $(p, q, r) = (2, 2, 1)$ with the $L^p$ improving estimate in the region $\mathcal{R}_1$ (for instance, those tuples obtained in \cite{SS}), we get continuity estimate for any point in the interior of $\mathcal R_1$. 
\end{proof}
We now prove Proposition \ref{hsproposition}, with the main tool being the trilinear smoothing inequality recalled in the theorem below. Let $\eta$ be a smooth bump function with compact support in $\mathbb R ^2$, and $\mathbf{f}=(f_0,f_1,f_2)$ be a tuple of functions on $\mathbb{R}$. Define \begin{equation}\mathcal T(\mathbf{f}) = \iint \prod_{i = 0}^2f_i\circ \phi_i(x, t)\eta(x, t) \,dxdt.\end{equation}

\begin{thm}[\cite{MCZZ}]
\label{trismooth}
Let $U$ be a connected neighborhood of the support of $\eta$.
Let $\phi_j:U\to\mathbb R^1$ be real analytic.
Assume that for any $i\ne j\in\{0,1,2\}$,
$\det(\nabla\phi_i,\nabla\phi_j)$ does not vanish identically 
in any nonempty open set.
Assume that for any nonempty connected open subset $U'\subset U$,
for any $\mathbf g\in C^\omega(\Phi(U'))$ that satisfies
$\sum_{j=0}^2 (g_j\circ\phi_j) \equiv 0$
in $U'$, each $g_j$ is constant in $\phi_j(U')$.

Then there exist $\delta>0$ and $C<\infty$
such that for all Lebesgue measurable functions 
$\mathbf{f}  = (f_0,f_1,f_2)\in L^2(\mathbb{R})\times L^2(\mathbb{R})\times L^2(\mathbb R)$,
the integral defining $\mathcal T(\mathbf{f})$ converges absolutely and 
\begin{equation}
|\mathcal T(\mathbf{f})| \leq C \prod_{j=0}^2 \|f_j\|_{H^{-\delta}}.
\end{equation}
\end{thm}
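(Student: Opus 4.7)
The plan is to follow the general framework of Christ's trilinear smoothing theory \cite{MC}, adapted in \cite{MCZZ} to cover the degenerate real-analytic setting needed here. The strategy has two layers: first, reduce the Sobolev inequality to a quantitative decay estimate for an oscillatory integral associated with the phase $\Psi_\xi(x,t) = \sum_{j=0}^2 \xi_j \phi_j(x,t)$; second, establish that oscillatory decay by a stratified stationary-phase analysis in which the two non-degeneracy hypotheses play distinct and essential roles.

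First, using Fourier inversion on each $f_j$, one can rewrite
\[
\mathcal{T}(\mathbf{f}) = \int_{\mathbb{R}^3} \hat f_0(\xi_0)\hat f_1(\xi_1)\hat f_2(\xi_2)\, I(\xi)\,d\xi,\qquad I(\xi) = \iint \eta(x,t)\, e^{i\Psi_\xi(x,t)}\,dx\,dt,
\]
where $I(\xi)$ is an oscillatory integral with a smooth compactly supported amplitude. If one can prove a uniform decay bound $|I(\xi)| \le C(1+|\xi|)^{-\delta_0}$ for some $\delta_0 > 0$, then setting $\delta = \delta_0/3$ and using the elementary inequality $(1+|\xi|)^{-\delta_0} \le \prod_j (1+|\xi_j|)^{-\delta}$ together with Cauchy--Schwarz (or Plancherel) in each Fourier variable yields the desired $H^{-\delta}$ Sobolev bound. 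Absolute convergence of $\mathcal{T}(\mathbf{f})$ is then automatic by a standard density argument from Schwartz functions.

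The core task is therefore the oscillatory estimate for $I(\xi)$. I would dyadically localize $|\xi|\sim 2^N$, factor out $|\xi|$ from the phase, and study the rescaled phase $\psi_\omega = \sum_j \omega_j \phi_j$ for $\omega$ on the unit sphere in $\mathbb{R}^3$. The first hypothesis, that no pair $\det(\nabla\phi_i,\nabla\phi_j)$ vanishes on a nonempty open set, prevents the critical set $\Sigma_\omega = \{\nabla\psi_\omega = 0\}$ from filling up an open region whenever only two of the components $\omega_j$ are active, and it guarantees that for generic $\omega$ standard stationary phase already gives $|\xi|^{-1}$ decay. The second hypothesis, the substantive new input, is needed precisely at the exceptional directions where $\Sigma_\omega$ threatens to be positive-dimensional: the statement that $\sum g_j\circ\phi_j \equiv 0$ on an open subset forces each $g_j$ to be constant rules out the existence of a non-trivial real-analytic dependence among the $\phi_j$, which is exactly what would allow $\psi_\omega$ to be constant along a curve. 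In the real-analytic category, one may then invoke a resolution-of-singularities / Puiseux expansion to stratify $\Sigma_\omega$ into finitely many pieces on each of which an effective van der Corput estimate applies with some positive gain, uniform in $\omega$ by compactness of the sphere; this yields $|I(\xi)|\lesssim |\xi|^{-\delta_0}$.

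The main obstacle is handling the exceptional directions at which $\Sigma_\omega$ degenerates. Without the second hypothesis one can build examples where the $\phi_j$'s satisfy an affine-like relation, making $\psi_\omega$ constant along a curve and destroying smoothing altogether. The delicate part of \cite{MCZZ} is to translate the second hypothesis into a sharp quantitative bound on the vanishing order of $\psi_\omega$ along $\Sigma_\omega$ after a real-analytic change of variables, and then to piece together the stratified stationary-phase bounds into a single uniform exponent $\delta_0$. Once this uniform decay is secured, the passage from $|I(\xi)|$ to the trilinear Sobolev estimate is a short Plancherel-based argument and the theorem follows.
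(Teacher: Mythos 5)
This theorem is \emph{cited} in the paper, not proved: it is stated with the attribution \cite{MCZZ}, and the authors go on to apply it (via Proposition~\ref{hsproposition}) without reproving it. So there is no ``paper's own proof'' against which to compare your proposal.

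Evaluating the proposal on its own terms, there is a genuine gap in the central reduction. You propose to deduce the $H^{-\delta}\times H^{-\delta}\times H^{-\delta}$ bound from a pointwise decay estimate $|I(\xi)|\lesssim (1+|\xi|)^{-\delta_0}$ via ``Plancherel/Cauchy--Schwarz,'' but this step does not close. After writing $\hat g_j(\xi_j)=(1+|\xi_j|)^{-\delta}\hat f_j(\xi_j)$, the bound $\prod_j(1+|\xi_j|)^{\delta}(1+|\xi|)^{-\delta_0}\lesssim 1$ only reduces the task to estimating $\int \prod_j |\hat g_j(\xi_j)|\,d\xi$, which factors as $\prod_j\|\hat g_j\|_{L^1}$ and is not controlled by $\prod_j\|g_j\|_{L^2}$. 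To extract an $L^2$ bound one would need a tensor-product decay $\prod_j(1+|\xi_j|)^{-1/2-\epsilon}$, which the theorem's (necessarily tiny) $\delta$ does not supply and which a single stationary-phase estimate for $I(\xi)$ cannot supply either. Moreover, pointwise decay of $I(\xi)$ in every direction of $\mathbb{R}^3$ is a delicate matter; the hypothesis on $\sum_j g_j\circ\phi_j$ rules out $I$ being nonvanishing at infinity along lines, but it does not automatically give a rate that survives the factorization issue above.

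In fact the proof in \cite{MCZZ} (following Christ's framework in \cite{MC}) does not go through pointwise bounds on $I(\xi)$ at all. The argument is structured around: a reduction to compactly supported $L^2$ functions and a frequency-localized $L^2\times L^2\to L^2$ ``$\epsilon$-gain'' estimate; a Cauchy--Schwarz (TT*) manipulation that transforms the trilinear form into an expression controlled by Gowers box (uniformity) norms of data built from the phase; sublevel-set / van der Corput–type estimates showing these box norms are quantitatively small under the nondegeneracy hypotheses; and an inverse theorem for the box norm to handle the near-extremal structured case, which is where the hypothesis forcing each $g_j$ to be constant enters. This is a qualitatively different route from ``oscillatory integral decay plus Plancherel,'' and the latter would not prove the theorem even granting the (problematic) decay estimate it assumes.
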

\begin{proof}[Proof of Proposition \ref{hsproposition}] 
Let $\eta$ be a smooth bump function supported on $[-\frac{1}{6}, 1+\frac{1}{6}]$, and $\eta_n(x)=\eta(x-n)$ satisfying $\sum_n\eta_n\equiv 1$. Define for each $n\in \mathbb{Z}$ that
$$\mathcal{T}_n(f_1, f_2,g) = \iint \eta_n(x)\chi_{[0, 2\pi]}(t)f_1(x+\cos t)f_2(x+\sin t)g(x)\,dxdt.$$
It is straightforward to check that $\mathcal{T}_n$ satisfies the assumption of Theorem \ref{trismooth}, and applying the trilinear smoothing inequality gives rise to the same constant for all $n$.

Then, by duality,
\[
\|\mathcal A_1(Q_kf_1,f_2)\|_1 \leq \sum_{n} \sup _{{\rm supp}g_n\subset [n-\frac{1}{6}, n+1+\frac{1}{6}],|g_n|\leq 1}|\mathcal{T}_n((Q_k f_1),f_2 ,g_n)|.
\]

We now introduce another localization to the input functions. Let $\chi$ be a smooth bump function supported on $[-\epsilon,1+\epsilon]$,
taking values in $[0,1]$, and satisfying $\chi\equiv 1$ on $[\epsilon, 1-\epsilon]$, for some $\epsilon$ sufficiently small. Denote $\chi_n(x)=\chi(x-n)$ and $\tilde \chi_n =
\chi_{n-2}+ \chi_{n-1}+\chi_{n}+\chi_{n+1}+\chi_{n+2}$. Notice that $\tilde \chi_n \equiv 1$ on $[n -2+\epsilon,n +3-\epsilon]$ and ${\rm supp} (\tilde \chi_n) \subset [n -2-\epsilon,n + 3+\epsilon]$. From the definition of $\mathcal{T}_n$, one has that the above sum is bounded by
\[
\leq\sum_n \sup_{{\rm supp}g_n\subset [n-\frac{1}{6}, n+1+\frac{1}{6}],|g_n|\leq 1}|\mathcal{T}_n((Q_kf_1)\tilde\chi_{n},f_2\tilde \chi_{n},g_n)|\leq I + II,
\]
where 
\begin{align*}
I&=\sum_n \sup_{{\rm supp}g_n\subset [n-\frac{1}{6}, n+1+\frac{1}{6}],|g_n|\leq 1}|\mathcal{T}_n(Q_k(f_1\tilde\chi_{n}),f_2\tilde \chi_{n},g_n)|,\\
II&=\sum_n \sup_{{\rm supp}g_n\subset [n-\frac{1}{6}, n+1+\frac{1}{6}],|g_n|\leq 1}|\mathcal{T}_n( (Q_kf_1)\tilde\chi_{n}- Q_k(f_1\tilde\chi_{n}) ,f_2\tilde \chi_{n},g_n)|.
\end{align*}

Notice that $\|g_n\|_{H^{-\delta}} \leq \|g_n\|_2 \leq 1$ uniformly over $n$. By the trilinear smoothing inequality and Cauchy-Schwarz, 
$$ I\lesssim \sum_n\|Q_k(\tilde \chi_n f_1)\|_{H^{-\delta}} \|\tilde \chi_n f_2\|_{H^{-\delta}}\leq 2^{-\delta k}(\sum_n \|\tilde \chi_n f_1\|_2^2)^{1/2}(\sum_n \|\tilde \chi_n f_2\|_2^2)^{1/2}\lesssim 2^{-\delta k}\|f_1\|_2\|f_2\|_2.$$

For the error part $II$, let $m_k(x) = 2^k(1+2^k|x|^2)^{-10}$, which satisfies $|\hat{\phi}_k(x)|\chi_{|x|\geq 1}\lesssim 2^{-k}m_k(x)$. 
The following pointwise estimate holds true by the fast decaying property of $\hat\phi$ and the fact that $\tilde \chi_n(x)-\tilde \chi_n(u)\equiv 0$ when $|x-u|<1, x\in {\rm supp}(\eta_n)$:
\begin{align*}
|(Q_kf_1)\tilde\chi_{n}- Q_k(f_1\tilde\chi_{n})(x)| &= \left|\int_{|x-u|>1} (\tilde \chi_n(x)-\tilde \chi_n(u))f_1(u)\hat\phi_k(x-u)\,du\right|\\
&\lesssim 2^{-k}\int |f_1(u)|\,m_k(x-u)\,du \\
&=2^{-k}|f_1|*m_k(x). \end{align*}
This then implies
\begin{align*}
II &\lesssim \sum_n \sup_{{\rm supp}g_n\subset [n-\frac{1}{6}, n+1+\frac{1}{6}],|g_n|\leq 1} \langle  \mathcal{A}_1(2^{-k}|f_1|*m_k, |f_2|)\eta_n, |g_n|\rangle\\
&\leq \sum_n\|\mathcal A_1(2^{-k}|f_1|*m_k, |f_2|)\eta_n\|_1\\
&\lesssim 2^{-k}\||f_1|*m_k\|_2\|f_2\|_2\lesssim 2^{-k}\|f_1\|_2\|f_2\|_2.
\end{align*}
where we used that $\mathcal{A}_1:L^2\times L^2\rightarrow L^1$ is bounded, which is an easy consequence of Holder's inequality.
\end{proof}

The proof above works for averaging operators over a more general class of curves in $\mathbb R^2$ as well. See \cite{MCZZ} for necessary conditions on this class of curves.

\section{Reduction to dyadic maximal operators}\label{sec: reduction}

In this section, we follow closely the argument in \cite{triangle} to reduce the problem to sparse bounds for dyadic maximal operators. The arguments in this section work for all $d\geq 1$.

Without loss of generality, assume $f,g\geq 0$. If $\mathcal{D}'$ is a dyadic lattice whose cubes have side lengths in the set $\{\frac{1}{3}2^j\},\,j\in \Z$, then by the Three Lattice Theorem 
$$\{3Q\colon Q\in \mathcal{D}'\}=\cup_{i=1}^{3^d}\mathcal{D}^i,$$
where each $\mathcal{D}^i$ is dyadic lattice whose cubes have side lengths in the set $\{2^j\colon j\in \Z\}$. Denote 
\begin{align*}
\mathcal{D}_m'&=\left\{Q\in \mathcal{D}'\colon l(Q)=\frac{1}{3}2^{m}\right\}, \\
\mathcal{D}_m^{i}&=\left\{Q\in \mathcal{D}^i\colon l(Q)=2^{m}\right\}.
\end{align*}

Define 
$$\mathcal{A}_{*,t}(f,g)(x):=\sup_{s\in [t,2t]} \mathcal{A}_s(f,g)(x).$$

Then
\begin{equation*}
    \begin{split}
        \mathcal{A}_{*,2^{m-4}}(f,g)(x)=&\mathcal{A}_{*,2^{m-4}}(\sum_{Q\in \mathcal{D}'_m}f\chi_{Q},\sum_{Q'\in \mathcal{D}'_m}g\chi_{Q'})(x)\\
        \leq &\sum_{Q,Q'\in \mathcal{D}'_m} \mathcal{A}_{*,2^{m-4}}(f\chi_Q,g\chi_{Q'})(x).
    \end{split}
\end{equation*}

We claim that if $\mathcal{A}_{*,2^{m-4}}(f\chi_Q,g\chi_{Q'})(x)\neq 0$, then $Q'\subseteq 3Q$. Indeed, if 
$$\sup_{t\in[2^{m-4},2^{m-3}]}\int_{S^{2d-1}}(f\chi_Q)(x-ty)(g\chi_{Q'})(x-tz)d\sigma (y,z)\neq 0,$$
take $t\in [2^{m-4},2^{m-3}]$ and $(y,z)\in S^{2d-1}$ such that $x-ty\in Q$ and $x-tz\in Q'$ so 
$$d(Q,Q')\leq |(x-ty)-(x-tz)|\leq t|y-z|\leq 2^{m-2}<\frac{1}{3}2^m=l(Q)\Rightarrow Q'\subseteq 3Q.$$

Let $Q(1),Q(2),\dots,Q(3^d)$ be an enumeration of the cubes in $\mathcal{D}_m'$ that comprise $3Q$. Then 
\begin{equation*}
    \begin{split}
        \mathcal{A}_{*,2^{m-4}}(f,g)(x)\leq &\sum_{Q\in \mathcal{D}'_m}\sum_{j=1}^{3^d} \mathcal{A}_{*,2^{m-4}}(f\chi_Q,g\chi_{Q(j)})(x)\\
        =:&\sum_{Q\in \mathcal{D}'_m}\sum_{j=1}^{3^d} \mathcal{A}_{*,Q,Q(j)}(f,g)(x).
    \end{split}
\end{equation*}

Moreover, if $x\in \text{supp}\,( \mathcal{A}_{*,Q,Q(j)}(f,g) )$, then $(x-ty)\in Q$ for some $(y,z)\in S^{2d-1}$ and $t\in [2^{m-4},2^{m-3}]$, so $d(x,Q)\leq |ty|\leq 2^{m-3}|y|<\frac{1}{3}2^m $. This implies that
$$\text{supp} \mathcal{A}_{*,Q,Q(j)}(f,g)\subseteq 3Q.$$

Putting all of this together,
\begin{equation*}
    \begin{split}
        \mathcal{M}(f,g)(x)=&\sup_{m\in\Z}\mathcal{A}_{*,2^{m-4}}(f,g)(x)\\
        \leq &\sup_{m\in \Z} \sum_{Q\in \mathcal{D}_m'}\sum_{j=1}^{3^d} \mathcal{A}_{*,Q,Q(j)}(f,g)(x)\\
        =&\sup_{m\in \Z} \sum_{i=1}^{3^d} \sum_{Q\in \mathcal{D}^{i}_m}\sum_{j=1}^{3^d} \mathcal{A}_{*,\frac{1}{3}Q,(\frac{1}{3}Q)(j)}(f,g)(x)\\
        =&:\sup_{m\in \Z} \sum_{i,j=1}^{3^d} \sum_{Q\in \mathcal{D}^{i}_m} \mathcal{A}_{*,Q}^j(f,g)(x) \quad\text{ (with ${\rm supp}(\mathcal{A}_{*,Q}^j)(f,g)\subseteq Q$)}\\
        \leq & \sum_{i,j=1}^{3^d}\sup_{m\in \Z} \sup_{Q\in \mathcal{D}^{i}_m} \mathcal{A}_{*,Q}^j(f,g)(x)\quad\text{ (because the cubes in $\mathcal{D}_m^{i}$ are disjoint)}\\
        =& \sum_{i,j=1}^{3^d}\sup_{Q\in \mathcal{D}^{i}} \mathcal{A}_{*,Q}^j(f,g)(x)=:\sum_{i,j=1}^{3^d} \mathcal{M}_{*,j}^{i}(f,g)(x).
    \end{split}
\end{equation*}

Hence, it is enough to find sparse bounds for dyadic maximal operators of the form 
$$\mathcal{M}_{*,j}(f,g)(x)=\sup_{Q\in \mathcal{D}}\mathcal{A}_{*,Q}^j(f,g)(x)=\sup_{Q\in \mathcal{D}}\mathcal{A}_{*,2^{m-4}}(f\chi_{\frac{1}{3}Q},g\chi_{(\frac{1}{3}Q)(j)})(x),$$
where $\mathcal{D}$ is a dyadic lattice, and for each $Q$, $m$ is given by $l(Q)=2^m$.

The previous reduction to the sparse domination of dyadic maximal operators, combined with the following lemma, will allow us to reduce our problem to prove the analogue of Lemma 4.2 in \cite{triangle}, which we will state and prove in Section \ref{sec: sparse lemma}. 

\begin{lem}\label{localization}
Let $\mathcal{D}$ be a dyadic lattice. Suppose $f,g,h$ are bounded compactly supported functions in $\tilde{Q}\in \mathcal{D}$. Then if $Q_0\in \mathcal{D}$ is a large enough ancestor, one has
$$\langle \mathcal{A}_{*,Q'}^j(f,g) ,h\rangle=0, \text{ for all }Q'\in \mathcal{D},\, Q_0\subset Q'.$$
\end{lem}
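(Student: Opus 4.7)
The plan is a straightforward scale mismatch argument: if $Q'$ is much larger than $\tilde{Q}$, the radii $t \sim 2^{m'-4}$ appearing in $\mathcal{A}_{*,Q'}^j$ are forced to be too large for the integrand to be non-zero once we insist that both the input points and the base point $x$ lie in $\tilde{Q}$.

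First, recall the localized definition
\[
\mathcal{A}_{*,Q'}^j(f,g)(x)=\sup_{t\in[2^{m'-4},2^{m'-3}]}\int_{S^{2d-1}}(f\chi_{\frac{1}{3}Q'})(x-ty)(g\chi_{(\frac{1}{3}Q')(j)})(x-tz)\,d\sigma(y,z),
\]
where $l(Q')=2^{m'}$. Since $h$ is supported in $\tilde{Q}$ and $f,g$ are supported in $\tilde{Q}$, for $\langle \mathcal{A}_{*,Q'}^j(f,g), h\rangle$ to be non-zero it is necessary that there exist $x\in\tilde{Q}$, $t\in[2^{m'-4},2^{m'-3}]$, and $(y,z)\in S^{2d-1}$ such that both $x-ty\in \tilde{Q}$ and $x-tz\in \tilde{Q}$ (we freely discard the restrictions to $\frac{1}{3}Q'$ and $(\frac{1}{3}Q')(j)$, since this only makes the necessary condition weaker).

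The key geometric observation is that since $(y,z)\in S^{2d-1}$, one has $|y|^2+|z|^2=1$, so $\max(|y|,|z|)\geq 1/\sqrt{2}$. Without loss of generality assume $|y|\geq 1/\sqrt{2}$. Then on the one hand $t|y|\geq 2^{m'-4}/\sqrt{2}$, while on the other hand $t|y|=|x-(x-ty)|\leq \mathrm{diam}(\tilde{Q})=\sqrt{d}\,l(\tilde{Q})$. Combining these,
\[
2^{m'}\leq 16\sqrt{2d}\,l(\tilde{Q}).
\]

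Therefore, it suffices to choose $Q_0\in\mathcal{D}$ an ancestor of $\tilde{Q}$ with $l(Q_0)>16\sqrt{2d}\,l(\tilde{Q})$. For any $Q'\in\mathcal{D}$ with $Q_0\subseteq Q'$, we have $2^{m'}=l(Q')\geq l(Q_0)>16\sqrt{2d}\,l(\tilde{Q})$, contradicting the inequality above. Hence no admissible $(x,t,y,z)$ exists and $\langle \mathcal{A}_{*,Q'}^j(f,g),h\rangle=0$, as required.

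There is no real obstacle here; the only care needed is the geometric point that the antipodal coordinates on $S^{2d-1}$ cannot both be small, which is what allows one to convert the scale $t\sim l(Q')$ of the averaging into a lower bound on the distance traveled by at least one of the input points.
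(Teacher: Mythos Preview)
Your proof is correct and follows essentially the same approach as the paper: both use the support constraints to force $x,\,x-ty,\,x-tz\in\tilde{Q}$, then exploit that $\max(|y|,|z|)\geq 1/\sqrt{2}$ on $S^{2d-1}$ to derive a lower bound $t|y|\gtrsim l(Q')$ that contradicts $t|y|\leq\mathrm{diam}(\tilde{Q})$ once $Q'$ is large enough. Your version is slightly more explicit about the constant, but the argument is identical.
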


\begin{proof}
Since $\text{supp}(h)\subset \tilde{Q}$, it is enough to show that $\mathcal{A}_{*,Q'}^j(f,g)(x)=0$ for all $x\in \tilde{Q}$, if $Q'$ is a large enough ancestor of $\tilde{Q}$.

Suppose there exists $x\in \tilde{Q}$ such that $\mathcal{A}_{*,Q'}^j(f,g)(x)\neq 0$. Then for $2^m=l(Q')$, one has
\begin{equation*}
    0\neq\sup_{2^{m-4}\leq t\leq 2^{m-3}} \int_{S^{2d-1}}(f\chi_{\frac{1}{3}Q'})(x-ty)(g\chi_{(\frac{1}{3}Q')(j)})(x-tz)\,d\sigma(y,z),
\end{equation*}which implies that there is $t\in [2^{m-4}, 2^{m-3}]$, $(y,z)\in S^{2d-1}$ such that
\[
(f\chi_{\frac{1}{3}Q'})(x-ty)(g\chi_{(\frac{1}{3}Q')(j)})(x-tz)\neq 0.
\]
This then yields that $( x-ty,x-tz)\in \text{supp}(f)\times\text{supp}(g)\subseteq  \tilde{Q}\times \tilde{Q}$. Since $(y,z)\in S^{2d-1}$, we have $|y|\geq \frac{1}{\sqrt{2}}$ or $|z|\geq \frac{1}{\sqrt{2}}$. Say $|y|\geq \frac{1}{\sqrt{2}}$, then 

\begin{equation*}
    \begin{split}
       \frac{l(Q')}{2^{9/2}}\leq 2^{m-4}|y|\leq  |(x-ty)-x|\leq \text{diam}(\tilde{Q}),
    \end{split}
\end{equation*}
which will not be satisfied for $Q'$ sufficiently large.

\end{proof}

\section{Proof of Theorem \ref{thm: main}}\label{sec: sparse lemma}
We follow the exposition in \cite{Roncal} and \cite{triangle} most closely in this section. 

By arguments already contained in \cite{triangle} we can reduce the proof of the sparse bounds to the case when $f$ and $g$ are characteristic functions and $h\geq 0$ is a compactly supported bounded function.

\begin{lem}\label{indicator}
Let $d\geq 2$. Let $(1/p,1/q,1/r)$ with $r>1$ in the boundedness region $\mathcal{R}(d)$. Then for compactly supported indicator functions $f=\chi_F$ and $g=\chi_G$, and $h\geq 0$ compactly supported bounded function, there exists a sparse collection $\mathcal{S}$ such that 
$$\langle \mathcal{M}(f,g),h \rangle \lesssim \sum_{Q\in \mathcal{S}} |Q|\langle f\rangle_{Q,p}\langle g\rangle_{Q,q}\langle h\rangle_{Q,r'}.$$
\end{lem}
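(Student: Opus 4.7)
By the reduction carried out in Section \ref{sec: reduction}, it suffices to prove a sparse bound for each of the $3^{2d}$ dyadic maximal operators $\mathcal{M}_{*,j}^i$. Since $f=\chi_F$, $g=\chi_G$, and $h$ are compactly supported and bounded, Lemma \ref{localization} provides a dyadic ancestor $Q_0\in\mathcal{D}^i$ large enough that $\langle \mathcal{A}_{*,Q'}^j(f,g), h\rangle=0$ for every $Q'\supsetneq Q_0$. The heart of the proof is a recursive sparse inequality: for every dyadic cube $Q\subseteq Q_0$ there is a pairwise disjoint family of dyadic subcubes $\mathcal{Q}(Q)$ of $Q$ with $\sum_{Q'\in\mathcal{Q}(Q)}|Q'|\leq \tfrac{1}{2}|Q|$ such that
\begin{equation}\label{eq:rec}
\langle \mathcal{A}_{*,Q}^j(f\chi_Q, g\chi_Q), h\chi_Q\rangle\lesssim |Q|\langle f\rangle_{Q,p}\langle g\rangle_{Q,q}\langle h\rangle_{Q,r'}+\sum_{Q'\in\mathcal{Q}(Q)}\langle \mathcal{M}_{*,j}^i(f\chi_{Q'},g\chi_{Q'}), h\chi_{Q'}\rangle.
\end{equation}
Iterating (\ref{eq:rec}) starting from $Q_0$ and collecting every cube encountered into $\mathcal{S}$ yields the desired $\tfrac{1}{2}$-sparse family.

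To prove (\ref{eq:rec}) at a fixed cube $Q$, perform a Calder\'on–Zygmund stopping-time: let $\mathcal{Q}(Q)$ be the maximal dyadic subcubes $Q'\subsetneq Q$ on which at least one of $\langle f\rangle_{\cdot,p}$, $\langle g\rangle_{\cdot,q}$, $\langle h\rangle_{\cdot,r'}$ exceeds $K$ times its average on $Q$; for $K$ large the weak $(1,1)$ bound for the dyadic maximal function yields the packing estimate $\sum|Q'|\leq\tfrac{1}{2}|Q|$. Decompose $f\chi_Q=f^{g}+f^{b}$ with $f^{b}:=f\chi_E$, $E=\bigcup_{Q'\in\mathcal{Q}(Q)}Q'$, and analogously for $g$ and $h$. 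Expanding the trilinear form yields eight pieces indexed by good/bad in each slot. The fully Bad-Bad-Bad term splits cube-by-cube over $\mathcal{Q}(Q)$ and feeds directly into the right-hand side of (\ref{eq:rec}). For the Good-Good-Good term, the stopping-time keeps the local $L^p$, $L^q$, $L^{r'}$ averages of $f^g$, $g^g$, $h^g$ uniformly controlled by their averages over $Q$; combined with the rescaled $L^p\times L^q\to L^r$ boundedness of $\tilde{\mathcal{M}}$ (valid since $(1/p,1/q,1/r)\in\mathcal{R}(d)$; see \cite{JL}) and H\"older on the pairing with $h^g$, this produces exactly the leading sparse term $|Q|\langle f\rangle_{Q,p}\langle g\rangle_{Q,q}\langle h\rangle_{Q,r'}$.

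The six remaining mixed terms are controlled by the continuity estimates of Section \ref{sec: continuity}. For a typical Good-Bad-Good contribution with $g$ bad, one writes each $g\chi_{Q'}$, after a harmless shift, as a difference $g\chi_{Q'}-\tau_{h_{Q'}}(g\chi_{Q'})$ with $|h_{Q'}|\sim\ell(Q')\ll\ell(Q)$, so that the corresponding piece equals $\mathcal{A}_{*,Q}^j(f^g, g\chi_{Q'}-\tau_{h_{Q'}}(g\chi_{Q'}))$. Applying Proposition \ref{prop: cont} after rescaling to the unit scale yields a decay factor $(\ell(Q')/\ell(Q))^\eta$; summing over $Q'\in\mathcal{Q}(Q)$ via the packing condition converts this into a geometric series converging to a constant multiple of $|Q|\langle f\rangle_{Q,p}\langle g\rangle_{Q,q}\langle h\rangle_{Q,r'}$. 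The doubly mixed Good-Bad-Bad terms (and permutations) are treated identically, using Proposition \ref{prop: double} to obtain independent decay in each of the two bad slots.

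The main technical obstacle is the ``Bad-Good'' case outside the range $\min(p,q)\leq r$. The prior works \cite{Roncal, triangle} impose the convenient hypothesis $p,q\leq r$ under which a direct H\"older-type argument suffices, but we wish to dispense with it. The remedy exploits the geometry of $\mathcal{R}(d)$: one checks, by inspecting the formula for $m(d,r)$, that for every $1<p<\infty$ there exists $q^{\ast}\in(1,\infty)$ with $(1/p,1/q^{\ast},1/p)\in\operatorname{int}(\mathcal{R}(d))$, and symmetrically in the $q$ slot. One then interpolates the Bad-Good analysis at such an auxiliary diagonal exponent (where $p,q\leq r$ trivially holds and the direct argument applies) against the continuity bound at the target exponent to cover the full open region of $\mathcal{R}(d)$, completing the proof of Theorem \ref{thm: main}.
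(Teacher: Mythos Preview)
Your overall architecture---reduce via Section~\ref{sec: reduction}, localize via Lemma~\ref{localization}, then run a stopping-time recursion---is reasonable, but the treatment of the mixed terms contains a genuine gap. You write that for a Good--Bad--Good piece ``one writes each $g\chi_{Q'}$, after a harmless shift, as a difference $g\chi_{Q'}-\tau_{h_{Q'}}(g\chi_{Q'})$.'' This identity is simply false: $g\chi_{Q'}$ is a nonnegative indicator with nonzero integral, and no translation makes it a difference of the required form. The continuity estimate of Proposition~\ref{prop: cont} applies to inputs of the form $g-\tau_h g$; it does not apply to a bare localized indicator. In the paper (following \cite{triangle}) the passage to a difference comes from the \emph{mean-zero} property of the Calder\'on--Zygmund bad part $\beta_{f,m-k}=\sum_{P}(f-\langle f\rangle_P)\chi_P$: pairing $\beta_{f,m-k}$ against the adjoint $\mathcal{A}^{*,1}_{2^{m-4}\kappa}(\gamma_g,h_Q)$ allows one to subtract the average over each $P$ for free, and an average over $y\in P_0$ converts this into the operator $[I-\tau_{-y}]$ acting on $\beta_{f,m-k}$ (equation~(\ref{bgtriangle})). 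Your bad part $g^b=g\chi_E$ has no cancellation, so this mechanism is unavailable, and you have not supplied an alternative.

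There are two secondary issues. First, your recursion~(\ref{eq:rec}) has the single-scale operator $\mathcal{A}_{*,Q}^j$ on the left but the full dyadic maximal operator $\mathcal{M}_{*,j}^i$ on the right; iterating this bounds only $\langle \mathcal{A}_{*,Q_0}^j(f,g),h\rangle$, not $\langle \mathcal{M}_{*,j}^i(f,g),h\rangle$. Second, the claim that the Bad--Bad--Bad term ``splits cube-by-cube over $\mathcal{Q}(Q)$'' is false for $\mathcal{A}_{*,Q}^j$: this operator averages at scale $\ell(Q)$, so $f^b\chi_{Q'_1}$ and $g^b\chi_{Q'_2}$ interact across different stopping cubes $Q'_1\neq Q'_2$. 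The paper avoids both problems by separating the two stages: the sparse recursion (producing the disjoint sets $B_Q$ and the family $\mathcal{D}_0$ with controlled averages) is carried out first, and only then is the mean-zero Calder\'on--Zygmund decomposition applied to $f$ and $g$ inside the fixed cube $Q_0$, yielding the four terms GG, BG, GB, BB of Lemma~\ref{lem: key}. Finally, your description of how the constraint $p,q\le r$ is removed is not quite the paper's: rather than interpolating, the paper directly applies the continuity estimate at the auxiliary point $(1/p,1/q_0,1/p)\in\mathcal{R}(d)$, using $\|\gamma_g\|_\infty\lesssim\langle g\rangle_{Q_0,q}$ to absorb the $q_0$-average and $\langle h\rangle_{Q_0,p'}\le\langle h\rangle_{Q_0,r'}$ (since $p'<r'$) to return to the target exponent.
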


Lemma \ref{indicator} follows from the main lemma below and the two facts that we observed in Section \ref{sec: reduction}. One of them was that we could reduce the proof of the sparse domination to sparse bounds for dyadic maximal operators of the form
$$\mathcal{M}_{*,j}(f,g)(x)=\sup_{Q\in \mathcal{D}}\mathcal{A}_{*,2^{m-4}}(f\chi_{\frac{1}{3}Q},g\chi_{(\frac{1}{3}Q)(j)})=\sup_{Q\in \mathcal{D}}\mathcal{A}_{*,Q}^j(f,g)(x),$$
where $\mathcal{D}$ is a dyadic lattice, and for each $Q$, $m$ is given by $l(Q)=2^m$. The second ingredient is the localization property proved in Lemma \ref{localization}, which allows us for given functions $f,g,h$ to pass from sums over all dyadic cubes to sums of subcubes of some dyadic cube $Q_0$. The details can be found
in \cite{triangle}, when they show that Lemma 4.2 implies Lemma 4.1, with small modifications because we are in the full (rather than lacunary) case. We omit the details.

Our main Theorem \ref{thm: main} will thus follow from the next lemma. The continuity estimates we developed in Section \ref{sec: continuity} will play a key role in its proof. To simplify the notation, $\langle f\rangle_{Q,p}$ below will denote the $p$ average of $|f|$ over the cube $Q$.

\begin{lem}\label{lem: key}
Let $d\geq 2$ and $(1/p,1/q,1/r)\in \mathcal{R}(d)$, the interior of the known boundedness region of  $\tilde{\mathcal{M}}$ given in \eqref{eq: bdd region}, satisfying $r>1$. Let $f=\chi_{F}$, $g=\chi_{G}$ be measurable indicator functions supported in a dyadic cube $Q_0$ and $h$ a bounded measurable function also supported in $Q_0$. Suppose there exists $C_0>1$ and $\mathcal{D}_0$ be a collection of dyadic subcubes of $Q_0$ such that 
\begin{equation*}
    \sup_{Q'\in \mathcal{D}_0}\, \sup_{Q\colon Q'\subseteq Q\subseteq Q_0}\left(\dfrac{\langle f\rangle_{Q,p}}{\langle f\rangle_{Q_0,p}}+\dfrac{\langle g\rangle_{Q,q}}{\langle g\rangle_{Q_0,q}}+\dfrac{\langle h\rangle_{Q,r'}}{\langle h\rangle_{Q_0,r'}}\right)\leq C_0.
\end{equation*}
Then if $\{B_Q\}$ is a family of pairwise disjoint sets indexed by $Q\in \mathcal{D}_0$ with $B_Q\subset Q$ and $h_Q:=h\chi_{B_Q}$, we have the estimate 
$$\sum_{Q\in \mathcal{D}_0} \langle \mathcal{A}_{*,Q}^j(f,g),h_Q \rangle\lesssim |Q_0|\langle f \rangle_{Q_0,p}\langle g\rangle_{Q_0,q} \langle h \rangle_{Q_0,r'} $$
where $\mathcal{A}_{*,Q}^j(f,g)(x)=\mathcal{A}_{*,2^{m-4}}(f\chi_{\frac{1}{3}Q},g \chi_{(\frac{1}{3} Q)(j)})(x)$, for $l(Q)=2^m$.

\end{lem}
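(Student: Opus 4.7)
My plan is to follow the bilinear sparse-domination framework developed in \cite{Roncal, triangle}, with the continuity estimates of Section~\ref{sec: continuity} (Proposition~\ref{prop: cont} and Proposition~\ref{prop: double}) as the driving engine. Fix $Q\in\mathcal{D}_0$ with $l(Q)=2^m$, a smooth bump $\varphi$ supported on the unit ball with $\int\varphi=1$, and a small parameter $\delta\in(0,1)$ to be chosen at the end. Let $\varphi_m(y)=(\delta\,2^{m-4})^{-d}\varphi(y/(\delta\,2^{m-4}))$ and decompose each input as good plus bad,
\[
f\chi_{\frac{1}{3}Q} = f_Q^G+f_Q^B, \qquad f_Q^G := (f\chi_{\frac{1}{3}Q})*\varphi_m,
\]
and analogously for $g\chi_{(\frac{1}{3}Q)(j)}$. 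Bilinearity then splits $\mathcal{A}_{*,Q}^j(f,g)$ into the four pieces GG, GB, BG, BB, which I treat separately.

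For the Good-Good piece I invoke the $L^p\times L^q\to L^r$ boundedness of $\tilde{\mathcal{M}}$ (known from \cite{JL}, and in fact recovered from Proposition~\ref{prop: cont} at $|h|=0$) rescaled to scale $2^{m-4}$. Pairing with $h_Q$ through Hölder, the hypothesis $\langle f\rangle_{Q,p},\langle g\rangle_{Q,q},\langle h\rangle_{Q,r'}\leq C_0$ times the corresponding averages on $Q_0$, together with the pairwise disjointness of $\{B_Q\}$, reduces the sum over $Q\in\mathcal{D}_0$ to a controlled $\ell^{r}$--$\ell^{r'}$ Hölder pairing that returns $|Q_0|\langle f\rangle_{Q_0,p}\langle g\rangle_{Q_0,q}\langle h\rangle_{Q_0,r'}$. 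The mixed pieces rest on the identity
\[
f_Q^B(x) = \int \varphi_m(h)\bigl(f\chi_{\frac{1}{3}Q}(x)-\tau_h(f\chi_{\frac{1}{3}Q})(x)\bigr)\,dh, \qquad |h|\lesssim \delta\,2^{m-4},
\]
so Minkowski's inequality and Proposition~\ref{prop: cont} with $s=2^{m-4}$ produce a gain $\delta^\eta$ per bad factor. The Bad-Bad case likewise gains $\delta^{\eta_1+\eta_2}$ by Proposition~\ref{prop: double}. These $\delta$-gains give a convergent geometric summation over the stopping tree, and the final choice of $\delta$ small (but fixed) closes the estimate.

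A subtle point, flagged at the end of the introduction, is the Bad-Good case: in \cite{Roncal, triangle} the corresponding step required an auxiliary constraint of the form $p\leq r$ in order for an interpolation to close. Here I exploit the shape of $\mathcal{R}(d)$ directly: for any $(1/p,1/q,1/r)\in\mathcal{R}(d)$ with $r>1$, one can locate $q^\ast$ such that $(1/p,1/q^\ast,1/p)\in\mathcal{R}(d)$ (and symmetrically $p^\ast$ for the opposite role), apply the BG estimate at the triple $(p,q^\ast,p)$ where the $p\leq r$--type restriction is trivially satisfied, and then interpolate with a second nearby triple in $\mathcal{R}(d)$ to recover the desired $(p,q,r)$ bound without assuming $p\leq r$.

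The main obstacle I expect is the careful bookkeeping of scales in the mixed cases: the rescaling factor $2^{md(1/r-1/p-1/q)}$ supplied by Proposition~\ref{prop: cont} must combine exactly with the stopping bounds on $\langle f\rangle_{Q,p},\langle g\rangle_{Q,q}$ and with the $\ell^{r}$--$\ell^{r'}$ pairing of $\{|Q|^{1/r}\}$ against $\{\|h_Q\|_{r'}\}$ to produce a uniform bound. The $\delta^\eta$ gain must be strong enough to absorb the combinatorial losses in the stopping tree; since $\delta$ is free and enters as a geometric factor, this reduces to checking that a single geometric series in $\delta$ converges, after which $\delta$ is fixed once and for all, independent of $f,g,h,Q_0,\mathcal{D}_0$.
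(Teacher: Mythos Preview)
Your proposal has a genuine gap: the mollification-based good/bad decomposition does not provide the summability over $Q\in\mathcal{D}_0$ that the lemma requires.

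Concretely, your Good--Good argument (and the mixed pieces too) leads to a bound of the type
\[
\sum_{Q\in\mathcal{D}_0}|Q|\,\langle f\rangle_{Q,p}\langle g\rangle_{Q,q}\langle h_Q\rangle_{Q,r'}
\;\lesssim\; C_0^2\,\langle f\rangle_{Q_0,p}\langle g\rangle_{Q_0,q}\sum_{Q\in\mathcal{D}_0}|Q|^{1/r}\|h_Q\|_{r'}.
\]
Your proposed $\ell^r$--$\ell^{r'}$ H\"older pairing controls $\big(\sum_Q\|h_Q\|_{r'}^{r'}\big)^{1/r'}$ by $\|h\|_{L^{r'}(Q_0)}$ via the disjointness of $\{B_Q\}$, but the other factor is $\big(\sum_{Q\in\mathcal{D}_0}|Q|\big)^{1/r}$, and nothing in the hypotheses bounds $\sum_{Q}|Q|$ by $|Q_0|$: the cubes in $\mathcal{D}_0$ may be deeply nested. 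The factor $\delta^\eta$ on the mixed pieces is a \emph{fixed constant}, not a decay parameter indexed by $Q$, so it cannot rescue this sum; there is no ``stopping tree'' or geometric series here, since the lemma is a single estimate over a given collection $\mathcal{D}_0$, not an iteration.

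The paper's proof avoids this by performing a \emph{global} Calder\'on--Zygmund decomposition of $f$ and $g$ at height $C_0$ relative to $\langle f\rangle_{Q_0,p}$, $\langle g\rangle_{Q_0,q}$. This buys two things your mollification does not. First, the good parts $\gamma_f,\gamma_g$ satisfy $\|\gamma_f\|_\infty\lesssim\langle f\rangle_{Q_0,p}$ uniformly, so the GG term is simply $\sum_Q\|h_Q\|_1\le\|h\|_{L^1(Q_0)}$ via disjointness of $B_Q$ --- no $|Q|$ factors appear. Second, the bad part $\beta_f=\sum_k\beta_{f,m-k}$ is stratified by the scale gap $k$ between $Q$ and the supporting CZ cubes; the continuity estimate then gives decay $2^{-k\eta}$ in $k$, and for each fixed $k$ the CZ cubes at scale $m-k$ (as $m$ ranges over the scales of cubes in $\mathcal{D}_0$) are pairwise disjoint and cover a subset of $F$. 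This disjointness, encoded in Lemmas~\ref{lemmalacey} and~\ref{roncalsimplified}, is what makes the sum over $Q\in\mathcal{D}_0$ finite. Your per-cube mollification at the single relative scale $\delta$ has neither the global $L^\infty$ control nor the multiscale disjointness structure.

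On the side issue of removing the $p\le r$ constraint in the BG case: your idea of passing to a triple $(1/p,1/q^\ast,1/p)\in\mathcal{R}(d)$ is essentially correct and matches the paper, but no interpolation is needed. One applies the BG estimate directly at $(p,q_0,r_0)=(p,q_0,p)$; since $\gamma_g$ is bounded one has $\langle\gamma_g\rangle_{Q,q_0}\lesssim\langle g\rangle_{Q_0,q}$ for any $q_0$, and since $r_0'=p'<r'$ one has $\langle h\rangle_{Q_0,r_0'}\le\langle h\rangle_{Q_0,r'}$.
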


\begin{proof}

Without loss of generality, assume $h\geq 0$. It will be convenient to work with the slightly larger operators. If $l(Q)=2^m$,
\begin{equation*}
    \begin{split}
       \mathcal{A}_{*,Q}^j(f,g)(x)=&\sup_{t\in [2^{m-4},2^{m-3}]}\mathcal{A}_t(f\chi_{\frac{1}{3}Q},g\chi_{(\frac{1}{3}Q)(j)})(x)\\
       \leq & \sup_{t\in [2^{m-4},2^{m-3}]}\mathcal{A}_t(f\chi_{\frac{1}{2}Q},g\chi_{\tilde{Q}(j)})(x)=:\mathcal{S}_{*,Q}^j(f,g)(x)
    \end{split}
\end{equation*}
where $\tilde{Q}(j)$ is the union of dyadic children of $Q$ which cover the cube $(\frac{1}{3}Q)(j)$.

The operator $\mathcal{S}_{*,Q}^j(f,g)$ is bi-sublinear but we can define bilinear operators to control it. Namely for a given measurable function $\kappa:\R^d\rightarrow [1,2]$ define
$$\mathcal{S}_{\kappa,Q}^j(f,g)(x)=\mathcal{A}_{2^{m-4}\kappa(x)}(f\chi_{\frac{1}{2}Q},g\chi_{\tilde{Q}(j)})(x).$$

It is enough to prove that for all measurable $\kappa:\R^d\rightarrow [1,2]$, one has
$$\left|\sum_{Q\in \mathcal{D}_0} \langle \mathcal{S}_{\kappa,Q}^j(f,g),h_Q \rangle\right|\lesssim |Q_0|\langle f \rangle_{Q_0,p}\langle g\rangle_{Q_0,q} \langle h \rangle_{Q_0,r'}. $$

Following \cite{triangle}, one can define adjoints for $\mathcal{A}_{2^{m-4}\kappa}$. Recall that since $(1/p,1/q,1/r)\in \mathcal{R}(d)$,  
$$|\langle \mathcal{A}_{2^{m-4}\kappa}(f,g),h \rangle |\leq \|\mathcal{A}_{2^{m-4}\kappa}(f,g)\|_r\|h\|_{r'}\lesssim (2^m)^{d(\frac{1}{r}-\frac{1}{p}-\frac{1}{q})}\|f\|_p\|g\|_q\|h\|_{r'}.$$

This implies that for any $g\in L^q$, and $h\in L^{r'}$, the map $f \mapsto \langle \mathcal{A}_{2^{m-4}\kappa}(f,g),h \rangle$ is a bounded linear functional on $L^p$. By $L^p$ duality there exists $\mathcal{A}_{2^{m-4}\kappa}^{*,1}(g,h)\in L^{p'}$ such that 
$$\langle \mathcal{A}_{2^{m-4}\kappa}(f,g),h \rangle=\langle f, \mathcal{A}_{2^{m-4}\kappa}^{*,1}(g,h) \rangle, \quad \forall f\in L^p. $$

Similarly one can define $\mathcal{A}_{2^{m-4}\kappa}^{*,2}(f,h)\in L^{q'}$ such that 
$$\langle \mathcal{A}_{2^{m-4}\kappa}(f,g),h \rangle=\langle g, \mathcal{A}_{2^{m-4}\kappa}^{*,2}(f,h) \rangle ,\quad \forall g\in L^q.$$

Now, we perform a Calder\'on-Zygmund decomposition on $f,g$. Write $f=\gamma_{f}+\beta_{f}$, where the selected cubes satisfy
\begin{equation}
    B_f=\left\{\text{maximal dyadic cubes }P\subset Q_0: \frac{\langle f\rangle_{P,p}}{\langle f\rangle_{Q_0,p}}> C_0\right\},
\end{equation}
and as usual, we set
\begin{equation}
    \beta_f=\sum_{P\in B_f}(f-\langle f\rangle_{P})\chi_P.
\end{equation}

Similarly, $g=\gamma_g+\beta_{g}$.
Moreover, we can group all the cubes in $B_f$ of side length $2^k$ together, call this collection $B_{f,k}$, and let $\beta_{f,k}$ denote the part of $\beta_f$ supported on those cubes. Thus, we have
\begin{equation}
    \beta_f=\sum_{k=-\infty}^{m_0-1}\beta_{f,k}
\end{equation}
where $m_0=\log_2(l(Q_0))$.

As usual, the maximality of the selected cubes guarantees that the good part of $f$ (the $\gamma_f$) satisfies nice estimates such as $\| \gamma_f\|_{L^{\infty}}\lesssim \langle f\rangle_{Q_0,p}$. We break into a fourfold good-bad decomposition:
\begin{equation}
\begin{split}
    |\sum_{Q\in\mathcal{D}_0}\langle \mathcal{A}_{*,Q}^j(f,g),h_Q\rangle| &\le |\sum_{Q\in\mathcal{D}_0}\langle \mathcal{A}_{*,Q}^j(\gamma_f,\gamma_g),h_Q\rangle|+|\sum_{Q\in\mathcal{D}_0}\langle \mathcal{A}_{*,Q}^j(\beta_f,\gamma_g),h_Q\rangle|\\
    &+|\sum_{Q\in\mathcal{D}_0}\langle \mathcal{A}_{*,Q}^j(\gamma_f,\beta_g),h_Q\rangle|+|\sum_{Q\in\mathcal{D}_0}\langle \mathcal{A}_{*,Q}^j(\beta_f,\beta_g),h_Q\rangle| \\
    &=:GG+BG+GB+BB.
    \end{split}
\end{equation}

\textbf{The GG term}: This term can be dealt with easily using H\"older's inequality and the disjointness of the sets $B_Q$:
\begin{equation}
\begin{split}
    GG&\le \sum_{Q\in\mathcal{D}_0} \| \gamma_f\|_{\infty}\|\gamma_g\|_{\infty}\| h_Q\|_1 \\
    &\lesssim \langle \gamma_f\rangle_{Q_0, p}\langle \gamma_g\rangle_{Q_0, q}|Q_0|\langle h \rangle_{Q_0,1} \\
    &\le |Q_0|\langle \gamma_f\rangle_{Q_0, p}\langle \gamma_g\rangle_{Q_0, q}\langle h\rangle_{Q_0,r'}.
    \end{split}
\end{equation}
\textbf{The BG term}: 

\begin{equation*}
    \begin{split}
      BG=&  |\sum_{Q\in\mathcal{D}_0}\langle \mathcal{A}_{*,Q}^j(\beta_f,\gamma_g),h_Q\rangle|\\
      \leq &\sum_{q\in \mathcal{D}_0}\sum_{k=1}^{3} |\langle \mathcal{A}_{*,Q}^j(\beta_{f,m-k},\gamma_g),h_Q\rangle|+\sum_{q\in \mathcal{D}_0}\sum_{k=4}^{\infty} |\langle \mathcal{A}_{*,Q}^j(\beta_{f,m-k},\gamma_g),h_Q\rangle|.
    \end{split}
\end{equation*}

For $1\leq k\leq 3$, just using the boundedness properties of $\tilde{\mathcal{M}}$ we get
\begin{equation*}
    \begin{split}
       &\sum_{Q\in\mathcal{D}_0}|\langle \mathcal{A}_{*,Q}^j(\beta_{f,m-k},\gamma_g),h_Q\rangle|\\
       \lesssim &  \sum_{Q\in \mathcal{D}_0} 2^{md(\frac{1}{r_0}-\frac1p-\frac{1}{q_0})} \| \beta_{f,m-k}\|_{L^{p}(Q)} \| \gamma_{g}\|_{L^{q_0}(Q)} \| h_Q\|_{L^{r_0'}(Q)}\\
       \lesssim & \sum_{Q\in \mathcal{D}_0} |Q| \langle \beta_{f,m-k}\rangle_{Q,p} \langle \gamma_{g}\rangle_{Q,q_0} \langle h_Q\rangle_{Q,r_0'}  
    \end{split}
\end{equation*}
as long as $(1/p,1/q_0,1/r_0)\in \mathcal{R}(d)$.

Mimicking the argument in \cite[Section 4.3]{triangle} we get to the point where for all $Q\in \mathcal{D}_0$ with $l(Q)=2^m$, and for all $k\geq 4$,

\begin{equation}\label{bgtriangle}
    \begin{split}
        |\langle S_{\kappa,Q}^j&(\beta_{f,m-k},\gamma_g), h_Q\rangle |\\
        \lesssim & \dfrac{1}{|P_0|}\int_{P_0} \int_{\R^d} \mathcal{A}_{2^{m-4}\kappa}\left([I-\tau_{-y}]\left(\text{sign}(I_1(\cdot,\cdot-y))\beta_{f,m-k}\chi_{\frac{1}{2}Q}\right),\gamma_g \chi_{\tilde{Q}(j)}\right)(x)\\
        &\qquad\qquad\qquad\qquad\qquad\qquad\qquad\cdot h_Q(x)\,dx dy,
    \end{split}
\end{equation}
where $P_0$ is the cube centered at the origin with sidelength $2l(P)=2\cdot 2^{m-k}$, and
\[
\begin{split}
&I_1(x,x')\\
:=&\beta_{f,m-k}(x)\chi_{\frac{1}{2}Q}(x)\left\{ \mathcal{A}_{2^{m-4}\kappa}^{*,1}(\gamma_g \chi_{\tilde{Q}(j)} ,h_Q)(x)-\mathcal{A}_{2^{m-4}\kappa}^{*,1}(\gamma_g \chi_{\tilde{Q}(j)} ,h_Q)(x')\right\}.
\end{split}
\]

Remember that the condition $k\geq 4$ was important to guarantee that if $P\in B_{f,m-k}$ with $P\cap \frac{1}{2}Q\neq \emptyset$, then $P\subset \frac{1}{2} Q$, and analogously for $\tilde{Q}(j) $. Then we can exploit the fact that $\beta_f$ has average $0$ in each of the bad cubes $P$. The main reason why we passed to the slightly larger cubes $\frac{1}{2} Q$ and $\tilde{Q}(j)$ is to have these nesting properties satisfied.

Once we have (\ref{bgtriangle}), we may use the continuity estimates that we proved in Proposition \ref{prop: cont} for $(1/p,1/q_0,1/r_0)\in \mathcal{R}(d)$ to be chosen smartly later. The right hand side of (\ref{bgtriangle}) is then bounded by
\begin{equation*}
    \begin{split}
        \lesssim & \dfrac{1}{|P_0|}\int_{P_0}\left(\dfrac{|y|}{2^m}\right)^{\eta}2^{md(1/r_0-1/p-1/q_0)}\|\beta_{f,m-k}\|_{L^{p}(Q)}\|\gamma_{g}\|_{L^{q_0}(Q)}\|h_Q\|_{L^{r_0'}(Q)}\, dy\\
        \lesssim & 2^{-k\eta}2^{md} (2^{-md/p}\|\beta_{f,m-k}\|_{L^{p}(Q)})(2^{-md/q_0}\|\gamma_{g}\|_{L^{q_0}(Q)})(2^{-md/r_0'}\|h_Q\|_{L^{r_0'}(Q)}) \\
        = & 2^{-k\eta}|Q| \langle \beta_{f,m-k}\rangle_{Q,p} \langle \gamma_{g}\rangle_{Q,q_0} \langle h_Q\rangle_{Q,r_0'}. \\
    \end{split}
\end{equation*}

Observe that independently of the choice of $q_0\geq 1$, one always has
$$\langle\gamma_{g}\rangle_{Q,q_0}\leq \|\gamma_g\|_{\infty}\lesssim \langle g\rangle_{Q_0,q}.$$

Now all we need to check is that for a suitable choice of $(\frac{1}{q_0},\frac{1}{r_0})$, with $(\frac{1}{p},\frac{1}{q_0},\frac{1}{r_0})\in \mathcal{R}(d)$ one has for all $k\geq 1$ that
\begin{equation}\label{bgestimate}
    \sum_{Q\in \mathcal{D}_0}|Q|\langle\beta_{f,m-k} \rangle_{Q,p} \langle h_Q\rangle_{Q,r_0'}\lesssim |Q_0| \langle f\rangle_{Q_0,p} \langle h\rangle_{Q_0,r'}.
\end{equation}

For completeness, we state and prove the following lemma, which was already implicit in \cite{Lacey} and \cite{Roncal}.

\begin{lem}\label{lemmalacey}
Let $k\geq 1$ fixed. Let $p_1,p_2\in [1,\infty)$ such that $1/p_1+1/p_2\geq 1$. Then 
$$\sum_{Q\in \mathcal{D}_0}|Q|\langle\beta_{f,m-k} \rangle_{Q,p_1} \langle h_Q\rangle_{Q,p_2}\lesssim |Q_0|\langle f\rangle_{Q_0,p_1} \langle h\rangle_{Q_0,p_2},$$
with implicit constant independent of $k$.
\end{lem}

Before proving it, let us see how Lemma \ref{lemmalacey} implies (\ref{bgestimate}).

We will consider two cases, $1/p\geq 1/r$ and $1/p< 1/r$, and we describe the choice of the auxiliary pair $(1/q_0,1/r_0)$ for each of those cases. Basically all we need is a triple $(1/p,1/q_0,1/r_0)\in \mathcal{R}(d)$ with $1/p+1/r_0'\geq 1$, so we can apply Lemma \ref{lemmalacey}.

If $1/p\geq 1/r$, one can simply take $(1/q_0,1/r_0)=(1/q,1/r)$. Then Lemma \ref{lemmalacey} can be applied directly to get (\ref{bgestimate}), because $1/p+1/r'\geq 1$.

If $1/p<1/r$, take $r_0=p$ and choose $1<q_0<\infty$ large enough so that $1/p+1/q_0< m(d,p)$, where $m(d,p)$ is the quantity defining the region $\mathcal{R}(d)$ for the continuity estimates. Then, the continuity estimate holds for $(1/p,1/q_0,1/r_0)$ and again since  $1/p+1/r_0'=1/p+1/p'=1$ we can get from Lemma \ref{lemmalacey} that 
$$\sum_{Q\in \mathcal{D}_0}|Q|\langle\beta_{f,m-k} \rangle_{Q,p} \langle h_Q\rangle_{Q,r_0'}\lesssim |Q_0| \langle f\rangle_{Q_0,p} \langle h\rangle_{Q_0,r_0'} .$$

\begin{figure}[h]
\begin{center}
         \scalebox{0.9}{
\begin{tikzpicture}

\fill[blue!10!white] (0,2.5)--(2.5,0)--(3,0)--(3,1.7)--(1.7,3)--(0,3)--(0,2.5);

\draw (-0.3,4.1) node {$y$};
\draw (4.1,-0.1) node {$x$};
\draw (3,-0.4) node {$1$};
\draw (-0.4,3) node {$1$};
\draw (2.5,-0.4) node {$\frac{1}{p}$};
\draw (-0.4,2.5) node {$1/p$};

\draw[->,line width=1pt] (-0.2,0)--(4,0);
\draw[->,line width=1pt] (0,-0.2)--(0,4);

\draw[-,line width=1pt] (3,-0.1)--(3,0.1);
\draw[-,line width=1pt] (-0.1,3)--(0.1,3);
\draw[-,line width=1pt] (-0.1,2.5)--(0.1,2.5);
\draw[-,line width=1pt] (2.5,-0.1)--(2.5,0.1);

\draw[-,line width=1pt, red] (2.5,0)--(2.5,3);
\filldraw[black] (0,0) circle (1.5pt);

\filldraw[red] (2.5,2.5) circle (1.5pt);
\filldraw[red] (2.5,1) circle (1.5pt);

\draw[red] (3.15,2.5) node {$(\frac{1}{p},\frac{1}{q})$};
\draw[red] (3.1,1) node {$(\frac{1}{p},\frac{1}{q_0})$};
\draw[dashed,blue,line width=1pt] (0,2.5)--(2.5,0)--(3,0)--(3,1.7)--(1.7,3)--(0,3)--(0,2.5);

\draw[green!70!black] (3,1.7)--(1.7,3);
\draw[green!70!black] (4.5,1.7) node {$x+y=m(d,p)$};

\end{tikzpicture}}
\end{center}
\caption{Figure that illustrates how for each $p$ one can find $q_0$ such that $1/p< 1/p+1/q_0<m(d,p)$, that is, $(1/p,1/q_0,1/p)\in \mathcal{R}(d)$.}
\end{figure}
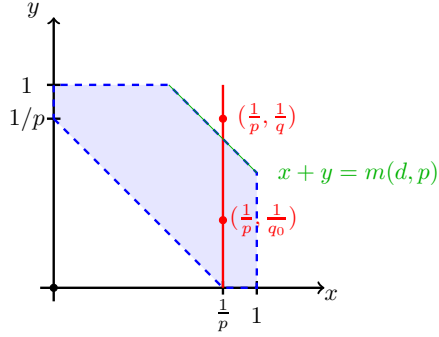

Notice that $1/p<1/r$ implies $1/p'>1/r'$ and thus $r_0'=p'<r'$. Hence, we can bound $\langle h\rangle_{Q_0,r_0'}\leq \langle h\rangle_{Q_0,r'}$, which implies (\ref{bgestimate}).

\begin{rem}
Notice that exploiting the fact that $\gamma_g$ is bounded we were able to choose the triple $(1/p,1/q_0,1/r_0)$ with $r_0\geq r$ and $q_0$ conveniently, so we can handle both cases $1/p<1/r$ and $1/p\geq 1/r$. This is an improvement in comparison  with the estimate of the BG part in
\cite{triangle} where they assumed $1/p\geq 1/r$. Similarly we don't need to assume $1/q\geq 1/r$ for the estimate of the GB term.
\end{rem}

\begin{proof}[Proof of Lemma \ref{lemmalacey}]
This proof depends on the fact that $f=\chi_F$ and $g=\chi_G$ are characteristic functions.  
\begin{equation*}
    \begin{split}
        \beta_{f,m-k}=&\sum_{P\in B_{f,m-k}}(f-\langle f\rangle_P )\chi_P=\sum_{P\in B_{f,m-k}}\chi_{P\cap F}-\sum_{P\in B_{f,m-k}}\langle f\rangle_P \chi_P\\
        =&\chi_{F_{m,k}} -\sum_{P\in B_{f,m-k}}\langle f\rangle_P \chi_P,
    \end{split}
\end{equation*}where $F_{m,k}=\sqcup_{P\in B_{f,m-k}} (P\cap F)$. Therefore,
\[
 |\beta_{f,m-k}|\leq  \chi_{F,m,k}  +\langle f \rangle_{Q_0}  \chi_{E_{f,m,k}},
\]where $E_{f,m,k}=\sqcup_{P\in B_{f,m-k}}P$. Here, we used that by the maximality of the bad cubes of $f$, $\langle f \rangle_{P}\lesssim \langle f\rangle_{Q_0}$.

We can then write 

\begin{equation*}
    \begin{split}
        \sum_{Q\in \mathcal{D}_0}|Q|\langle\beta_{f,m-k} \rangle_{Q,p_1} \langle h_Q\rangle_{Q,p_2}\lesssim & \sum_{Q\in \mathcal{D}_0}|Q|\langle \chi_{F_{m,k}} \rangle_{Q,p_1} \langle h_Q\rangle_{Q,p_2}\\
        +&\langle f \rangle_{Q_0,p_1}\sum_{Q\in \mathcal{D}_0}|Q|\langle \chi_{E_{f,m,k}} \rangle_{Q,p_1} \langle h_Q\rangle_{Q,p_2}=:I+II.
    \end{split}
\end{equation*}

Since $1/p_1+1/p_2\geq 1$, let $\tau\geq 0$ such that $1/p_1+1/p_2=1+\tau$, define $\dot{p_1}$ via $\frac{1}{\dot{p_1}}=\frac{1}{p_1}-\tau$. Then since $Q\in \mathcal{D}_0$,
\begin{equation*}
\begin{split}
     \langle \chi_{F_{m,k}} \rangle_{Q,p_1}=&\langle \chi_{F_{m,k}}\cdot \chi_{F_{m,k}} \rangle_{Q,p_1}\leq \langle \chi_{F_{m,k}} \rangle_{Q,\dot{p_1}}\langle \chi_{F_{m,k}} \rangle_{Q,1/\tau}\\
    \leq &\langle \chi_{F_{m,k}} \rangle_{Q,\dot{p_1}}\langle \chi_{F} \rangle_{Q}^{\tau}\lesssim \langle \chi_{F_{m,k}} \rangle_{Q,\dot{p_1}}\langle \chi_{F} \rangle_{Q_0}^{\tau}.
\end{split}
\end{equation*}

Now we use this and the fact that $1/\dot{p_1}+1/p_2=1$ to get
\begin{equation*}
    \begin{split}
        I=&\sum_{Q\in \mathcal{D}_0}|Q|\langle \chi_{F_{m,k}} \rangle_{Q,p_1} \langle h_Q\rangle_{Q,p_2}\leq \langle \chi_{F} \rangle_{Q_0}^{\tau} \sum_{Q\in \mathcal{D}_0}|Q|\langle \chi_{F_{m,k}} \rangle_{Q,\dot{p_1}} \langle h_Q\rangle_{Q,p_2}\\
        =&\langle \chi_{F} \rangle_{Q_0}^{\tau} \sum_{Q\in \mathcal{D}_0}\left(\int_Q \chi_{F_{m,k}}\right)^{1/\dot{p_1}}  \left(\int h_Q^{p_2}\right)^{1/p_2}\\
        \leq &\langle \chi_{F} \rangle_{Q_0}^{\tau} \left(\sum_{m}\sum_{Q\in \mathcal{D}_0\colon l(Q)=2^m}  \int_Q \chi_{F_{m,k}}\right)^{1/\dot{p_1}}  \left(\sum_{Q\in \mathcal{D}_0} \int_{B_Q} h^{p_2}\right)^{1/p_2}\\
        \leq &\langle \chi_{F} \rangle_{Q_0}^{\tau} \left(\sum_{m}  \int_{Q_0} \chi_{F_{m,k}}\right)^{1/\dot{p_1}}  \left( \int_{Q_0} h^{p_2}\right)^{1/p_2}\\
        \leq& |Q_0|  \left(\frac{|F|}{|Q_0|}\right)^{1/\dot{p_1}+\tau} \langle h \rangle_{Q_0,p_2}=|Q_0|  \langle f \rangle_{Q_0,p_1} \langle h \rangle_{Q_0,p_2}.
    \end{split}
\end{equation*}

 For $II$ the estimate is simpler:
\begin{equation*}
    \begin{split}
    \sum_{Q\in \mathcal{D}_0} |Q| \langle \chi_{E_{f,m,k}}\rangle_{Q,p_1} \langle h_Q\rangle_{Q,p_2}&\leq \sum_{Q\in \mathcal{D}_0} |Q| \langle \chi_{E_{f,m,k}}\rangle_{Q,\dot{p_1}}\langle h_Q\rangle_{Q,p_2}\\
    &\leq \sum_{Q\in \mathcal{D}_0} \left(\int_{Q} \chi_{E_{f,m,k}}\right)^{1/\dot{p_1}}\left(\int h_Q^{p_2}\right)^{1/p_2}\\
    &\leq \left(\sum_{Q\in \mathcal{D}_0}\int_{Q} \chi_{E_{f,m,k}}\right)^{1/\dot{p_1}} \left(\sum_{Q\in \mathcal{D}_0}\int h^{p_2}\chi_{B_Q}\right)^{1/p_2}\\
    &\leq |Q_0|^{1/\dot{p_1}}\left(\int_{Q_0}h^{p_2}\right)^{1/p_2}= |Q_0|\langle h\rangle_{Q_0,p_2}.
    \end{split}
\end{equation*}
\end{proof}

\noindent The \textbf{GB} term is treated symmetrically to the \textbf{BG} term.\\

\noindent \textbf{The BB term}: 

By similar computations as in \cite[Section 4.3]{triangle} we can use the boundedness properties of $\tilde{\mathcal{M}}$, the continuity estimates proved in Section \ref{sec: continuity}, and the adjoint operators $\mathcal{A}_{2^{m-4}\kappa}^{*,1}$ and $ \mathcal{A}_{2^{m-4}\kappa}^{*,2}$ to get the estimate

\begin{equation}
    \begin{split}
        BB\leq & \sup_{\kappa}\sum_{Q\in \mathcal{D}_0}|\langle S_{\kappa,Q}^j(\beta_{f},\beta_g), h_Q\rangle |\leq \sup_{\kappa}\sum_{Q\in \mathcal{D}_0} \sum_{k=1}^{\infty}\sum_{j=1}^{\infty} |\langle S_{\kappa,Q}^j(\beta_{f,m-k},\beta_{g,m-j}), h_Q\rangle|
        \\ \lesssim &\sum_{k=1}^{\infty} \sum_{j=1}^{\infty} 2^{-\eta(j+k)} \sum_{Q\in \mathcal{D}_0} |Q|\langle\beta_{f,m-k}\rangle_{Q,p}\langle \beta_{g,m-j} \rangle_{Q,q} \langle h_Q\rangle_{Q,r'}.
    \end{split}
\end{equation}
Lemma \ref{roncalsimplified} below will then imply that

$$BB\lesssim \sum_{j=1}^{\infty} 2^{-\eta(j+k)} |Q_0|\langle f\rangle_{Q_0,p}\langle g \rangle_{Q_0,q}\langle h\rangle_{Q_0,r'}\lesssim |Q_0|\langle f\rangle_{Q_0,p}\langle g \rangle_{Q_0,q}\langle h\rangle_{Q_0,r'}$$
which concludes the proof of Lemma \ref{lem: key}.
\end{proof}

\begin{lem}\label{roncalsimplified}
 Let $f=\chi_F$ and $g=\chi_G$ as before, and $1<r<\infty$. Assume $1/p+1/q+1/r'\geq 1$, then 
$$\sum_{Q\in \mathcal{D}_0} |Q|\langle\beta_{f,m-k}\rangle_{Q,p}\langle \beta_{g,m-j} \rangle_{Q,q} \langle h_Q\rangle_{Q,r'}\lesssim |Q_0|\langle f\rangle_{Q_0,p}\langle g \rangle_{Q_0,q}\langle h\rangle_{Q_0,r'}$$
with implicit constant independent of $k$ and $j$.
\end{lem}

\begin{proof}[Proof of Lemma \ref{roncalsimplified}]
We start with the bounds 
\begin{equation*}
    \begin{split}
        \langle\beta_{f,m-k} \rangle_{Q,p} &\lesssim \langle \chi_{F,m,k}\rangle_{Q,p} +\langle f \rangle_{Q_0,p} \langle\chi_{E_1,m,k}\rangle_{Q,p}\\
         \langle\beta_{g,m-j} \rangle_{Q,q} &\lesssim \langle \chi_{G,m,j}\rangle_{Q,q} +\langle g \rangle_{Q_0,q} \langle\chi_{E_2,m,k}\rangle_{Q,q},
    \end{split}
\end{equation*}
where 
\begin{equation*}
    \begin{split}
        E_{1,m,k}=&\bigsqcup_{P\in B_{f,m-k}} P\,,\hspace{1cm}
        F_{m,k}=\bigsqcup_{P\in B_{f,m-k}} P\cap F;\\
        E_{2,m,j}=&\bigsqcup_{P\in B_{g,m-j}} P\,,\hspace{1cm}
        G_{m,j} =\bigsqcup_{P\in B_{g,m-j}} P\cap G.
    \end{split}
\end{equation*}
Therefore 
\begin{equation*}
    \begin{split}
        &\sum_{Q\in \mathcal{D}_0} |Q|\langle\beta_{f,m-k}\rangle_{Q,p}\langle \beta_{g,m-j} \rangle_{Q_0,q} \langle h_Q\rangle_{Q,r'}\\
        \lesssim & 
        \sum_{Q\in \mathcal{D}_0} |Q|\langle \chi_{F,m,k}\rangle_{Q,p}\langle \chi_{G,m,j} \rangle_{Q,q} \langle h_Q\rangle_{Q,r'}\\
        &+\langle g\rangle_{Q_0,q} \sum_{Q\in \mathcal{D}_0} |Q|\langle \chi_{F,m,k}\rangle_{Q,p}\langle \chi_{E_{2,m,j}} \rangle_{Q,q} \langle h_Q\rangle_{Q,r'}\\
        &+\langle f\rangle_{Q_0,p} \sum_{Q\in \mathcal{D}_0} |Q|\langle \chi_{E_{1,m,k}} \rangle_{Q,p} \langle \chi_{G,m,j}\rangle_{Q,q}\langle h_Q\rangle_{Q,r'}\\
        &+\langle f\rangle_{Q_0,p} \langle g\rangle_{Q_0,q} \sum_{Q\in \mathcal{D}_0} |Q|\langle \chi_{E_1,m,k}\rangle_{Q,p}\langle \chi_{E_{2,m,j}} \rangle_{Q_0,q} \langle h_Q\rangle_{Q,r'}\\
        =:&BB_1+BB_2+BB_3+BB_4.
    \end{split}
\end{equation*}

Since $1/p+1/q+ 1/r'\geq 1$, take $0\leq\tau_1 <1/p$ and $0\leq \tau_2 < 1/q$ such that $\frac{1}{p}+\frac{1}{q}+\frac{1}{r'}=1+\tau_1+\tau_2$. Define $\dot{p},\dot{q}$ by $\frac{1}{\dot{p}}=\frac{1}{p}-\tau_1$ and $\frac{1}{\dot{q}}=\frac{1}{q}-\tau_2$, then the H\"older relation $\frac{1}{\dot{p}}+\frac{1}{\dot{q}}+\frac{1}{r'}=1$ is satisfied.

\begin{equation*}
    \begin{split}
        BB_1=&\sum_{Q\in \mathcal{D}_0} |Q|\langle \chi_{F,m,k}\rangle_{Q,p}\langle \chi_{G,m,j} \rangle_{Q,q} \langle h_Q\rangle_{Q,r'}\\
        \leq &\langle\chi_F \rangle_{Q_0}^{\tau_1}\langle\chi_G \rangle_{Q_0}^{\tau_2}\sum_{Q\in \mathcal{D}_0} (|Q|^{1/\dot{p}}\langle \chi_{F,m,k} \rangle_{Q,\dot{p}})(|Q|^{1/\dot{q}}\langle \chi_{G,m,j} \rangle_{Q,\dot{q}})(|Q|^{1/r'}\langle h_Q \rangle_{Q,r'})\\
        \leq &\langle\chi_F \rangle_{Q_0}^{\tau_1}\langle\chi_G \rangle_{Q_0}^{\tau_2}\left(\sum_{Q\in \mathcal{D}_0} \int_Q \chi_{F,m,k}\right)^{1/\dot{p}}\left(\sum_{Q\in \mathcal{D}_0} \int_Q \chi_{G,m,j}\right)^{1/\dot{q}}\left(\sum_{Q\in \mathcal{D}_0} \int_Q h_Q\right)^{1/\dot{r'}}\\
        \leq & \left(\frac{|F|}{|Q_0|}\right)^{\tau_1+1/ \dot{p}}\left(\frac{|G|}{|Q_0|}\right)^{\tau_2+1/\dot{q}}|Q_0|^{\frac{1}{\dot{p}}+\frac{1}{\dot{q}}}\left(\int_{Q_0} h^{r'}\right)^{1/r'}\\
=&|Q_0|\langle f\rangle_{Q_0,p}\langle g \rangle_{Q_0,q}\langle h\rangle_{Q_0,r'}.        
    \end{split}
\end{equation*}

Notice that 
$$\frac{1}{\dot{q}}=\frac{1}{q}-\tau_2\leq \frac{1}{q} \implies \dot{q}\geq q\implies \langle \chi_{E_{2,m,j}} \rangle_{Q,q}\leq  \langle \chi_{E_{2,m,j}} \rangle_{Q,\dot{q}}.  $$This allows us to estimate $BB_2$ in the following way:
\begin{equation*}
    \begin{split}
        BB_2=&\langle g\rangle_{Q_0,q}\sum_{Q\in \mathcal{D}_0} |Q|\langle \chi_{F,m,k}\rangle_{Q,p}\langle \chi_{E_{2,m,j}} \rangle_{Q,q} \langle h_Q\rangle_{Q,r'}\\
        \leq & \langle g\rangle_{Q_0,q}\langle\chi_F \rangle_{Q_0}^{\tau_1}\sum_{Q\in \mathcal{D}_0} |Q|\langle \chi_{F,m,k} \rangle_{Q,\dot{p}} \langle \chi_{E_{2,m,j}} \rangle_{Q,\dot{q}}\langle h_Q \rangle_{Q,r'}\\
        \leq & \langle g\rangle_{Q_0,q}\langle\chi_F \rangle_{Q_0}^{\tau_1} |F|^{1/\dot{p}}|Q_0|^{1/\dot{q}} \left(\int_{Q_0} h^{r'}\right)^{1/r'}\\
=&|Q_0|\langle f\rangle_{Q_0,p}\langle g \rangle_{Q_0,q}\langle h\rangle_{Q_0,r'}.        
    \end{split}
\end{equation*}The term $BB_3$ is symmetric to $BB_2$, and finally we have 
\begin{equation*}
    \begin{split}
        BB_4=&\langle f\rangle_{Q_0,p}\langle g\rangle_{Q_0,q}\sum_{Q\in \mathcal{D}_0} |Q|\langle \chi_{E_{1,m,k}}\rangle_{Q,p}\langle \chi_{E_{2,m,j}} \rangle_{Q,q} \langle h_Q\rangle_{Q,r'}\\
        \leq&\langle f\rangle_{Q_0,p}\langle g\rangle_{Q_0,q}\sum_{Q\in \mathcal{D}_0} |Q|\langle \chi_{E_{1,m,k}}\rangle_{Q,\dot{p}}\langle \chi_{E_{2,m,j}} \rangle_{Q,\dot{q}} \langle h_Q\rangle_{Q,r'}\\
      \leq& \langle f\rangle_{Q_0,p}\langle g\rangle_{Q_0,q} |Q_0|^{1/\dot{p}}|Q_0|^{1/\dot{q}} \left(\int_{Q_0} h^{r'}\right)^{1/r'} \\  
=&|Q_0|\langle f\rangle_{Q_0,p}\langle g \rangle_{Q_0,q}\langle h\rangle_{Q_0,r'}.        
    \end{split}
\end{equation*}

This finishes the proof of Lemma \ref{roncalsimplified}
\end{proof}

\section{Sharpness of the Bounds}\label{sec: sharp}
By considering specific examples, we can show sharpness of part of the range of exponents obtained for the sparse bounds in Theorem \ref{thm: main} and the continuity estimates for the localized bilinear spherical maximal function $\tilde{\mathcal{M}}$ in Proposition \ref{prop: cont}, up to the boundary. We rescale to unit size for simplicity in all cases. We will reuse some examples considered in \cite[Proposition 3.3]{JL}, and we also introduce a third Knapp-type example (motivated by an example in \cite{Lacey}) which extends the necessary conditions derived in \cite[Proposition 3.3]{JL}.

\subsection{Sharpness of the range for sparse bounds}
The following proposition establishes necessary conditions on the range of exponents $(p, q,r)$ for which sparse bound of the bilinear spherical maximal function holds.

\begin{prop}
Let $d\geq 2$, $1<p,q<\infty$, and $1<r\leq \infty$. Suppose further that $\frac{1}{p}+\frac{1}{q}>\frac{1}{r}$ and the bilinear spherical maximal function $\mathcal{M}$ has $(p,q,r')$ sparse bound. Then,
\[
\frac{1}{p}+\frac{1}{q}\leq\min\left\{1+\frac{d}{r}, \frac{2d-1}{d},\frac{2d}{d+1}+\frac{d-1}{r(d+1)}\right\}.
\]

\end{prop}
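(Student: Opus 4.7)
The plan is to derive each of the three upper bounds on $\frac{1}{p}+\frac{1}{q}$ by constructing explicit test triples $(F,G,H)$ of measurable sets and extracting a restricted weak-type inequality from the sparse bound. The standard reduction is that when $F,G,H$ are placed inside a common unit-scale cube, the sparse family contributing to the right-hand side of the sparse inequality consists of descendants of that cube, and summing over the family using $\eta$-sparsity yields
\[
\langle \mathcal{M}(\chi_F,\chi_G),\chi_H\rangle\lesssim |F|^{1/p}|G|^{1/q}|H|^{1/r'}.
\]
Choosing $H=\{\mathcal{M}(\chi_F,\chi_G)>\lambda\}$ then produces the restricted weak-type bound $\lambda\,|H|^{1/r}\lesssim |F|^{1/p}|G|^{1/q}$, which I will test against three geometric configurations of $(F,G)$.

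For the first bound $\frac{1}{p}+\frac{1}{q}\le 1+\frac{d}{r}$ and the H\"older-type bound $\frac{1}{p}+\frac{1}{q}\le \frac{2d-1}{d}$, I plan to reuse the two examples from \cite[Proposition 3.3]{JL}. The first is a concentration-type example, where $F$ and $G$ are $\delta$-neighborhoods of suitable lower-dimensional sets chosen so that $\mathcal{A}_1(\chi_F,\chi_G)$ is large on a set of favorable measure; tracking the resulting asymmetry in powers of $\delta$ forces the $1+d/r$ condition. The second is a symmetric Knapp-type example at scale $\delta$ that forces the H\"older scaling $\frac{2d-1}{d}$ (equivalently, $r\geq d/(2d-1)$). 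In each case, substituting the relevant scales into the restricted weak-type inequality reduces the problem to exponent-matching, which is routine.

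The new input is the third bound $\frac{1}{p}+\frac{1}{q}\le \frac{2d}{d+1}+\frac{d-1}{r(d+1)}$, which also gives a new necessary condition for the $L^p$ improving estimate of $\tilde{\mathcal{M}}$ beyond those in \cite[Proposition 3.3]{JL}. I plan to construct a Knapp-type example inspired by Lacey's example for the linear spherical maximal function in \cite{Lacey}. The construction takes $F$ and $G$ to be two thin plates in $\mathbb{R}^d$ of dimensions $\delta^{1/2}\times\cdots\times\delta^{1/2}\times\delta$ positioned at centers $c_1,c_2$ chosen so that $(c_1,c_2)\in S^{2d-1}$ (approximately) and oriented so that their short directions align with the projections of the outward normal to $S^{2d-1}$ at $(\omega,\zeta):=(c_1,c_2)$ onto the two $\mathbb{R}^d$ factors. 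Varying the scale parameter $t$ in the supremum defining $\mathcal M$ over an interval of size $\delta^{1/2}$, paired with varying $x$ over a compatible tube of dimensions $\delta^{1/2}\times\cdots\times\delta^{1/2}$ in the directions tangent to both component spheres $\{|y|=|\omega|\}$ and $\{|z|=|\zeta|\}$, then maintains a uniform lower bound $\mathcal{M}(\chi_F,\chi_G)\gtrsim\delta^d$ on a set of measure $\sim\delta^{(d-1)/2}$. The main obstacle I expect is carrying out the detailed geometric verification: parameterizing $S^{2d-1}$ near $(\omega,\zeta)$ with $|\omega|,|\zeta|\in(0,1)$, and showing that the tangent plane there projects transversally onto the tangent planes of the two component spheres so that both constraints $x-ty\in F$ and $x-tz\in G$ are simultaneously satisfied on a set of $x$'s of the claimed size. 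Once this is in place, the algebraic matching of powers of $\delta$ in the restricted weak-type bound delivers $\frac{2d}{d+1}+\frac{d-1}{r(d+1)}$ directly.
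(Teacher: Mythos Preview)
Your proposal is essentially correct and follows the same three-example strategy as the paper, borrowing the first two examples from \cite{JL} and introducing a Knapp-type example for the third bound. Two small corrections are worth noting. First, in your reduction, the contributing cubes in the sparse sum are not \emph{descendants} of the unit cube but rather cubes of side length $\gtrsim 1$: in each example the supports of $F$ (or $G$) and $H$ are separated by distance $\sim 1$, so no smaller cube can intersect all three; the resulting sum over large cubes is then geometric precisely because $1/p+1/q+1/r'>1$, and the paper bounds the sparse form directly this way rather than passing through a restricted weak-type inequality. Second, the paper's Knapp example is set up more simply than you sketch: both plates $R_1,R_2$ are centered at the origin, $h=\chi_{R_3}$ with $R_3=[-\sqrt{\delta},\sqrt{\delta}]^{d-1}\times[1/\sqrt{2},\sqrt{2}]$, and for each $x\in R_3$ one selects $t(x)\in[1,2]$ (so $t$ ranges over a \emph{unit} interval as $x$ moves along the long axis of $R_3$, not an interval of length $\delta^{1/2}$) and invokes the slicing formula to verify $\tilde{\mathcal M}(\chi_{R_1},\chi_{R_2})(x)\gtrsim\delta^d$.
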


Note that sparse bound is only interesting when $\frac{1}{p}+\frac{1}{q}>\frac{1}{r}$, as otherwise one can trivially get $(p,q,r')$ sparse bound using the $L^p\times L^q \to L^{r_0}$ bound of $\mathcal{M}$, for $1/r_0:=1/p+1/q$, and H\"older's inequality. 

Comparing the necessary conditions in the above to the sufficient conditions for sparse bounds stated in Theorem \ref{thm: main}, one sees that the first two necessary conditions are also sufficient (up to the boundary), while the third one is a bit weaker than the sufficient condition $\frac{1}{p}+\frac{1}{q}< \frac{1}{r}+\frac{2(d-1)}{d}$. In fact, as pointed out in \cite{JL}, it is also unknown whether this last sufficient condition is necessary for the $L^p$ improving estimate for the localized bilinear spherical maximal function $\tilde{\mathcal{M}}$. An immediate observation from the proof below is that the third example we present below also shows that if $\tilde{\mathcal{M}}$ is bounded from $L^p\times L^q$ to $L^r$, then $\frac{1}{p}+\frac{1}{q}\leq \frac{2d}{d+1}+\frac{d-1}{r(d+1)} $. This improves the best known unboundedness range for $L^p$ improving estimate for $\tilde{\mathcal{M}}$ \cite[Proposition 3.3]{JL}.

\begin{proof}
 We present three examples here, each of which will imply one of the claimed upper bounds. 

First, fix $0<\epsilon_0\ll 1$ sufficiently small. For $0<\delta\leq\epsilon_0$, consider functions $f_{1,\delta}=\chi_{B^d(0,\delta)}$, $g_{1,\delta}=\chi_{B^d(0,C\delta)}$, and $h_1=\chi_{\mathbb{A}}$, where $\mathbb{A}=\{x\in \mathbb{R}^d:\, \frac{1}{\sqrt{2}}\leq |x|\leq \frac{1}{\sqrt{2}}+\epsilon_0\}$. 
It is proved in \cite[Proposition 3.3]{JL} that there exists a choice of constant $C>0$ such that
\[
\tilde{\mathcal{M}}(f_{1,\delta},g_{1,\delta})(x)\gtrsim \delta^{2d-1},\quad \forall x\in \mathbb{A}.
\]
Indeed, $C=100$ is certainly sufficient for our purposes and it works in \cite{JL} argument, as can be seen by writing the operator in sliced form and considering intersections of balls and spheres centered at points in $\mathbb{A}$ with points in $B^d(0,\delta)$. This immediately implies the lower bound 
\[
|\langle \mathcal{M}(f_{1,\delta},g_{1,\delta}),h_1 \rangle|\geq |\langle \tilde{\mathcal{M}}(f_{1,\delta},g_{1,\delta}),h_1 \rangle|\gtrsim \delta^{2d-1}|\mathbb{A}|.
\]

On the other hand, in any sparse form $\sum_{Q\in \mathcal{S}}|Q|\langle f_{1,\delta}\rangle_{Q,p}\langle g_{1,\delta}\rangle_{Q,q}\langle h_1\rangle_{Q,r'}$, if $Q\in \mathcal{S}$ makes a nonzero contribution, then $Q$ needs to intersect the support of all three functions. Thus, $Q$ will have scale at least $1/2$ and the contribution of all such $Q$s decreases as the side length of the cube increases. Therefore, it suffices to consider the case that $\mathcal{S}$ contains a single cube $Q$ that has side length $\sim 1$. One thus has the upper bound
\[
\sum_{Q\in \mathcal{S}}|Q|\langle f_{1,\delta}\rangle_{Q,p}\langle g_{1,\delta}\rangle_{Q,q}\langle h_1\rangle_{Q,r'}\lesssim \|f_{1,\delta}\|_{L^p}\|g_{1,\delta}\|_{L^q}\|h_1\|_{L^{r'}}\sim \delta^{d(\frac{1}{p}+\frac{1}{q})}|\mathbb{A}|^{1/r'}.
\]Playing these two bounds against each other, one immediately has $\frac{1}{p}+\frac{1}{q}\leq \frac{2d-1}{d}$.

The second example is very similar, which we also borrow from \cite[Proposition 3.3]{JL}. Let $\delta>0$ be sufficiently small. Consider
\[
f_{2,\delta}=\chi_{B^d(0,\frac{1}{\sqrt{2}}+2\delta)\setminus B^d(0, \frac{1}{\sqrt{2}}-2\delta)},\quad g_{2,\delta}=\chi_{B^d(0,\frac{1}{\sqrt{2}}+C\delta)\setminus B^d(0, \frac{1}{\sqrt{2}}-C\delta)},
\]and $h_{2,\delta}=\chi_{B^d(0,\delta)}$. It is shown in \cite[Proposition 3.3]{JL} that there exists a choice of $C$ (which can be taken to be 100, similarly to the previous example) such that for all $|x|\leq \delta$, $\tilde{\mathcal{M}}(f_{2,\delta},g_{2,\delta})(x)\gtrsim \delta$. Therefore, one has
\[
|\langle \mathcal{M}(f_{2,\delta},g_{2,\delta}),h_{2,\delta} \rangle|\geq |\langle \tilde{\mathcal{M}}(f_{2,\delta},g_{2,\delta}),h_{2,\delta} \rangle|\gtrsim \delta^{1+d}.
\]Similarly as in the first example, one also has the upper bound
\[
\sum_{Q\in \mathcal{S}}|Q|\langle f_{2,\delta}\rangle_{Q,p}\langle g_{2,\delta}\rangle_{Q,q}\langle h_{2,\delta}\rangle_{Q,r'}\lesssim \|f_{2,\delta}\|_{L^p}\|g_{2,\delta}\|_{L^q}\|h_{2,\delta}\|_{L^{r'}}\sim \delta^{\frac{1}{p}+\frac{1}{q}+\frac{d}{r'}},
\]which implies $\frac{1}{p}+\frac{1}{q}\leq 1+\frac{d}{r}$.

The last example is adapted from \cite[Proposition 5.1]{Lacey} and is a Knapp type example. Let $\delta>0$. Consider three rectangles in $\mathbb{R}^d$: \[
R_1=[-C_1\sqrt{\delta}, C_1\sqrt{\delta}]^{d-1}\times [-C_1\delta, C_1\delta],\quad  R_2=[-C_2\sqrt{\delta}, C_2\sqrt{\delta}]^{d-1}\times [-C_2\delta, C_2\delta],
\]
\[R_3=[-\sqrt{\delta},\sqrt{\delta}]^{d-1}\times [\frac{1}{\sqrt{2}},\sqrt{2}],
\]where the constants $C_1, C_2$ can be chosen to be 100 if $\delta$ is sufficiently small. Let $f_{3,\delta}=\chi_{R_1}$, $g_{3,\delta}=\chi_{R_2}$, and $h_{3,\delta}=\chi_{R_3}$. For every $x\in R_3$, one has from the slicing formula \cite[(2.2)]{JL} that
\[
\begin{split}
\tilde{\mathcal{M}}(f_{3,\delta},g_{3,\delta})(x)\gtrsim & \sup_{1\leq t\leq 2}\int_{\frac{1}{\sqrt{2}}-\delta<|y|<\frac{1}{\sqrt{2}}} f_{3,\delta}(x-ty)\, \int_{S^{d-1}}g_{3,\delta}(x-t\sqrt{1-|y|^2}z)\,d\sigma(z)dy\\
\gtrsim & \int_{\frac{1}{\sqrt{2}}-\delta<|y|<\frac{1}{\sqrt{2}}}\chi_{R_1}(x-t(x)y)\delta^{\frac{d-1}{2}}\,dy.
\end{split}
\]
The last step above follows from the fact that for all $x\in R_3$, there is always some $t=t(x)=\sqrt{2}x_d\in [1,2]$ such that for all $\frac{1}{\sqrt{2}}-\delta<|y|<\frac{1}{\sqrt{2}}$, the circle centered at $x$ of radius $t|y|$ intersects a large portion of $R_1$ (with arc length comparable to $\delta^{(d-1)/2}$), and the circle centered at $x$ of radius $t\sqrt{1-|y|^2}$ intersects a large portion of $R_2$ (again with arc length comparable to $\delta^{(d-1)/2}$). This further implies that the above is bounded from below by
\[
\gtrsim \delta^{\frac{d-1}{2}}\delta \delta^{\frac{d-1}{2}}=\delta^d.
\]This implies the lower bound
\[
|\langle \mathcal{M}(f_{3,\delta},g_{3,\delta}),h_{3,\delta} \rangle|\gtrsim \delta^d |R_3|=\delta^{d+\frac{d-1}{2}}. 
\]On the other hand, by a similar argument as in the previous examples, one can obtain an upper bound
\[
\sum_{Q\in \mathcal{S}}|Q|\langle f_{3,\delta}\rangle_{Q,p}\langle g_{3,\delta}\rangle_{Q,q}\langle h_{3,\delta}\rangle_{Q,r'}\lesssim \|f_{3,\delta}\|_{L^p}\|g_{3,\delta}\|_{L^q}\|h_{3,\delta}\|_{L^{r'}}\sim \delta^{\frac{d+1}{2}(\frac{1}{p}+\frac{1}{q})+\frac{d-1}{2r'}}.
\]The sparse bound then implies that $\frac{1}{p}+\frac{1}{q}\leq \frac{2d}{d+1}+\frac{d-1}{r(d+1)}$.
\end{proof}

\subsection{Sharpness of the range for continuity estimates}

We now discuss sharpness of the range of exponents in Proposition \ref{prop: cont} for the continuity estimates of $\tilde{\mathcal{M}}$. The first necessary condition obtained below is based on standard results regarding multilinear Fourier multipliers. We include the argument here for the sake of completeness. In fact, it suffices to prove this for the single-scale spherical average $\mathcal{A}_1$, which is pointwisely bounded by $\tilde{\mathcal{M}}$ and thus the necessary condition extends to $\tilde{\mathcal{M}}$ as well.

\begin{prop}
Suppose that we have for some $0<p,q,r<\infty$ the following estimate
\[
\|\mathcal{A}_1(f, g-\tau_h g)\|_{L^r}\leq C |h|^\eta \|f\|_{L^p}\|g\|_{L^q}
\]uniform across all $|h|\le 1$. Then $\frac{1}{p}+\frac{1}{q}\ge\frac{1}{r}$.
\end{prop}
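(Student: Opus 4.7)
The plan is a standard H\"ormander-style tensorization argument that exploits the translation invariance of $\mathcal{A}_1$ together with the bounded support of its kernel $d\sigma$ on $S^{2d-1}$. From a single nontrivial example one builds a sequence of examples for which the two sides of the assumed inequality grow at different rates in $N$, forcing the claimed scaling relation.

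First I would fix some $h_0$ with $0<|h_0|\leq 1$ and produce smooth nonnegative bumps $f_0,g_0$ supported in a small ball $B(0,\delta)$ such that
\[
\mathcal{N}:=\|\mathcal{A}_1(f_0,g_0-\tau_{h_0}g_0)\|_{L^r}>0.
\]
For such bumps, $\mathcal{A}_1(f_0,g_0)$ is concentrated in a $\delta$-neighborhood of the sphere $S_1=\{|x|=1/\sqrt{2}\}$, while $\mathcal{A}_1(f_0,\tau_{h_0}g_0)$ is concentrated in a $\delta$-neighborhood of the distinct sphere $S_2=\{|x-h_0/2|=\sqrt{2-|h_0|^2}/2\}$ (obtained by solving $|x|^2+|x-h_0|^2=1$). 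The two supports only meet in an $O(\delta^2)$ region near $S_1\cap S_2$; in particular $\mathcal{A}_1(f_0,g_0)>0$ at points of $S_1$ away from $S_2$, where $\mathcal{A}_1(f_0,\tau_{h_0}g_0)=0$, so $\mathcal{N}>0$ for $\delta$ sufficiently small.

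Next I would tensorize. Pick $N$ widely separated vectors $v_1,\dots,v_N\in\mathbb{R}^d$ with $|v_j-v_k|\geq 100$ for $j\ne k$, and set $F_N:=\sum_k \tau_{v_k}f_0$ and $G_N:=\sum_k \tau_{v_k}g_0$. Disjointness of supports yields $\|F_N\|_p=N^{1/p}\|f_0\|_p$ and $\|G_N\|_q=N^{1/q}\|g_0\|_q$. Since $d\sigma$ is supported on $S^{2d-1}$ (radius $1$) and $|h_0|\leq 1$, every cross term $\mathcal{A}_1(\tau_{v_j}f_0,\tau_{v_k}g_0)$ and $\mathcal{A}_1(\tau_{v_j}f_0,\tau_{v_k+h_0}g_0)$ with $j\ne k$ vanishes identically, so by the translation equivariance of $\mathcal{A}_1$,
\[
\mathcal{A}_1(F_N,G_N-\tau_{h_0}G_N)=\sum_{k=1}^{N}\tau_{v_k}\bigl[\mathcal{A}_1(f_0,g_0-\tau_{h_0}g_0)\bigr],
\]
a sum with pairwise disjoint supports. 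Hence $\|\mathcal{A}_1(F_N,G_N-\tau_{h_0}G_N)\|_{L^r}=N^{1/r}\mathcal{N}$. Inserting $(F_N,G_N,h_0)$ into the hypothesis,
\[
N^{1/r}\mathcal{N}\leq C|h_0|^\eta N^{1/p+1/q}\|f_0\|_p\|g_0\|_q \qquad\text{for all }N\in\mathbb{N},
\]
and since $\mathcal{N}>0$ is independent of $N$, letting $N\to\infty$ forces $\frac{1}{r}\leq\frac{1}{p}+\frac{1}{q}$. The only mildly subtle step is the non-degeneracy $\mathcal{N}>0$ in Step~1; everything else is bookkeeping that follows from translation invariance of $\mathcal{A}_1$ and the compact support of its kernel.
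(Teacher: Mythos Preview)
Your proof is correct and follows essentially the same translation-invariance/tensorization argument as the paper: both exploit that $\mathcal{A}_1$ commutes with simultaneous translations and that its kernel has bounded support, so widely separated copies of a nontrivial example decouple and the $L^r$, $L^p$, $L^q$ norms scale as $N^{1/r}$, $N^{1/p}$, $N^{1/q}$ respectively. The paper uses a single translate (a factor of $2$) together with an ``equality'' argument, whereas you take $N$ copies and pass to the limit, and you supply an explicit geometric construction showing $\mathcal{N}>0$ rather than appealing to extremizers; these are cosmetic differences in the same argument.
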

\begin{proof}
For simplicity, denote $\mathcal{A}_h(f,g)(x)= \mathcal{A}_1(f, g-\tau_h g)(x)$. This operator commutes with simultaneous translations, which is to say that for any vector $v$, we have the identity
\begin{equation}
    \tau_v\mathcal{A}_h(f,g)(x)=\mathcal{A}_h(\tau_vf,\tau_vg)(x).
\end{equation}
We can fix a value of $h>0$ and $f,g$ such that equality holds in this bound. By standard density arguments, we may assume that $f,g$ both have compact support as well. We then have that
\begin{equation}
    \|\mathcal{A}_h(f+\tau_vf,g+\tau_vg)\|_{L^r}\le C|h|^\eta\|f+\tau_vf\|_{L^p}\|g+\tau_v g\|_{L^q}.
\end{equation}
We also have by bilinearity and translation invariance that the left hand side of the inequality can be rewritten as
\begin{equation}
    \|\mathcal{A}_h(f+\tau_vf,g+\tau_vg)\|_{L^r}=\|\mathcal{A}_h(f,g)+\tau_v\mathcal{A}_h(f,g)+\mathcal{A}_h(\tau_vf,g)+\mathcal{A}_h(f,\tau_vg)\|_{L^r}.
\end{equation}
Since $f,g$ have compact support, if $|v|$ is sufficiently large then the last two terms are identically 0. Finally, recalling the fact that for $s<\infty$
\begin{equation}
    \|k+\tau_vk\|_{L^s}\rightarrow 2^{1/s}\|k\|_{L^s} \text{ as }|v|\rightarrow \infty,
\end{equation}
we can send $|v|$ to infinity to get the estimate
\begin{equation}
    2^{1/r}\|\mathcal{A}_h(f,g)\|_{L^r} \leq C |h|^\eta 2^{\frac{1}{p}+\frac{1}{q}}\|f\|_{L^p}\|g\|_{L^q}.
\end{equation}
Since $f,g,h$ were chosen to make the original estimate hold with equality, we get a contradiction unless $\frac{1}{p}+\frac{1}{q}\ge\frac{1}{r}$.
\end{proof}

When $r>1$ the necessary conditions on the exponents for sparse domination $(p,q,r')$ are also necessary conditions for the continuity estimate (since when $r>1$ a larger region of continuity estimates would imply a larger range where the sparse bounds hold). If $r\leq 1$, one can observe that 
$$\min\left\{1+\frac{d}{r}, \frac{2d-1}{d},\frac{2d}{d+1}+\frac{d-1}{r(d+1)}\right\}=\frac{2d-1}{d}.$$
so all we need to show is the necessity of the condition $1/p+1/q\leq \frac{2d-1}{d}$. This can be done by using the corresponding example in the previous subsection and by taking the translation parameter $h$ in the continuity estimate such that $|h|=\delta^{1/4}$ for instance with $h$ aligned in the $x_1$-direction.
We summarize this observation in the following corollary and omit the proof; see \cite[Proposition 5.5]{Lacey} for a similar reasoning in the linear case.

\begin{cor}
Let $d\geq 2$. Suppose that we have for some $p,q,r$ the following estimate
\begin{equation}
\|\tilde{\mathcal{M}}(f, g-\tau_h g)\|_{L^r}\leq C |h|^\eta \|f\|_{L^p}\|g\|_{L^q}
\end{equation}
uniform across all $|h|\le 1$. Then 
\begin{equation}
\frac{1}{p}+\frac{1}{q}\le\min\left\{1+\frac{d}{r}, \frac{2d-1}{d},\frac{2d}{d+1}+\frac{d-1}{r(d+1)}\right\}.
\end{equation}
\end{cor}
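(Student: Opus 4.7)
The plan is to establish each of the three claimed upper bounds by splitting at $r=1$. In the range $r>1$, the argument is quick: the assumed continuity estimate at exponents $(p,q,r)$ implies, via the proof of Theorem~\ref{thm: main} (Sections \ref{sec: reduction} and \ref{sec: sparse lemma}), the $(p,q,r')$ sparse bound for the full maximal function $\mathcal{M}$. Hence, any necessary condition on $(p,q,r)$ derived for the sparse bound in the preceding proposition is also necessary for the continuity estimate, and these are precisely the three bounds we want.

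For $r\le 1$, as noted in the text just before the corollary, the minimum of the three quantities reduces to $\frac{2d-1}{d}$. It therefore suffices to verify that $\frac{1}{p}+\frac{1}{q}\le\frac{2d-1}{d}$ is necessary. To this end I would reuse the first example from the preceding proposition, taking $f_{1,\delta}=\chi_{B^d(0,\delta)}$ and $g_{1,\delta}=\chi_{B^d(0,C\delta)}$ for the same constant $C$, and choosing the translation $h=\delta^{1/4}e_1$ so that $|h|\to 0$ but $|h|\gg\delta$ as $\delta\to 0$.

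The heart of the matter is to produce a pointwise lower bound $\tilde{\mathcal{M}}(f_{1,\delta},g_{1,\delta}-\tau_h g_{1,\delta})(x)\gtrsim\delta^{2d-1}$ on a set of uniform measure. Fix $x\in\mathbb{A}$. The analysis of the first example shows that the $t$-section $t\mapsto\mathcal{A}_t(f_{1,\delta},g_{1,\delta})(x)$ is supported within $O(\delta)$ of $t_1^{\ast}(x):=\sqrt{2}\,|x|$ and attains size $\gtrsim\delta^{2d-1}$ there, while the same slicing computation applied to $\tau_h g_{1,\delta}$ shows that $t\mapsto\mathcal{A}_t(f_{1,\delta},\tau_h g_{1,\delta})(x)$ is supported within $O(\delta)$ of $t_2^{\ast}(x):=\sqrt{|x|^2+|x-h|^2}$. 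A direct computation yields
\[
(t_1^{\ast}(x))^2-(t_2^{\ast}(x))^2 \;=\; 2\,x\cdot h-|h|^2,
\]
so on the cap $\{x\in\mathbb{A}:\, x\cdot e_1\gtrsim 1\}$, which has measure $\gtrsim 1$, one has $|t_1^{\ast}(x)-t_2^{\ast}(x)|\gtrsim |h|=\delta^{1/4}\gg\delta$ for $\delta$ small. Evaluating at $t=t_1^{\ast}(x)\in[1,2]$, the second average vanishes while the first retains the lower bound, giving the pointwise estimate.

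Integrating yields $\|\tilde{\mathcal{M}}(f_{1,\delta},g_{1,\delta}-\tau_h g_{1,\delta})\|_{L^r}\gtrsim\delta^{2d-1}$, while the continuity hypothesis provides the upper bound $C\,|h|^\eta\,\|f_{1,\delta}\|_{L^p}\|g_{1,\delta}\|_{L^q}\sim\delta^{\eta/4+d/p+d/q}$. Letting $\delta\to 0$ forces $\frac{d}{p}+\frac{d}{q}\le 2d-1$, as desired. The main obstacle I anticipate is a careful verification of the separation-in-$t$ claim: one must confirm that the $t$-support of each average is genuinely of width $O(\delta)$ and not larger, which requires unwinding the constraint $|y|^2+|z|^2=1$ on $S^{2d-1}$ together with the support conditions on $f_{1,\delta}$ and $\tau_h g_{1,\delta}$ to pin down $t$ up to an $O(\delta)$-window around each $t_i^{\ast}(x)$, and it explains why $h$ must be chosen in the direction $e_1$ rather than tangent to the sphere of radius $|x|$.
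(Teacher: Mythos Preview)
Your proposal is correct and follows essentially the same two-case strategy as the paper: for $r>1$ you invoke the implication from continuity to sparse bounds and then cite the necessary conditions already obtained for the sparse form, while for $r\le 1$ you run the first example directly with $h=\delta^{1/4}e_1$ and argue by separation of the $t$-supports of $\mathcal{A}_t(f_{1,\delta},g_{1,\delta})$ and $\mathcal{A}_t(f_{1,\delta},\tau_h g_{1,\delta})$; your explicit computation of $(t_1^\ast)^2-(t_2^\ast)^2=2x\cdot h-|h|^2$ and the restriction to the cap $\{x\in\mathbb{A}:x\cdot e_1\gtrsim 1\}$ is exactly the mechanism the paper has in mind when it refers to \cite[Proposition 5.5]{Lacey}. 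One small caveat worth flagging: the step ``continuity at the single tuple $(p,q,r)$ implies $(p,q,r')$ sparse via the proof of Theorem~\ref{thm: main}'' is slightly delicate, since that proof (in the BG case with $1/p<1/r$ and in Proposition~\ref{prop: double}) uses continuity at auxiliary tuples, not only at $(p,q,r)$; the paper's own discussion is equally informal on this point, and the cleanest fix, if you want full rigor, is simply to run the direct example argument (as you do for $r\le 1$) for all three examples and all $r$.
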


\bibliographystyle{alpha}
\bibliography{sources}

\end{document}